
\documentclass[twosided, a4paper, 11.5pt, reqno]{amsart}

\usepackage{amsfonts}
\usepackage{amssymb}
\usepackage{latexsym}
\usepackage{graphics}
\usepackage[2emode]{psfrag}
\usepackage{mathrsfs}
\usepackage{amsthm}
\usepackage{amsmath}
\usepackage[all]{xy}
\usepackage{enumerate}
\usepackage[latin1]{inputenc}
\usepackage{lscape}
\usepackage{a4wide}
\usepackage[usenames]{color}
\usepackage{multicol}
\usepackage{wasysym}
\usepackage{eufrak}
\usepackage{ifpdf}
\usepackage[bookmarks=false]{hyperref}

\usepackage{cancel}
\usepackage[normalem]{ulem}




\setcounter{MaxMatrixCols}{10}

\input{xy}
 \xyoption{all}
\newtheorem{theorem}{\sc Theorem}[section]
\newtheorem{proposition}[theorem]{\sc Proposition}

\newtheorem{lemma}[theorem]{\sc Lemma}
\newtheorem{corollary}[theorem]{\sc Corollary}
\theoremstyle{definition}
\newtheorem{definition}[theorem]{\sc Definition}

\newtheorem{example}[theorem]{\sc Example}
\newtheorem{examples}[theorem]{\sc Examples}

\theoremstyle{remark}
\newtheorem{remark}[theorem]{\sc Remark}

\newtheorem{corodefinition}[theorem]{\sc Corollary and Definition}

\newcommand{\tensor}[1]{\otimes_{\scriptscriptstyle{#1}}}

\newcommand{\Sf}[1]{\mathsf{#1}}
\newcommand{\fk}[1]{\mathfrak{#1}}
\newcommand{\cat}[1]{\mathcal{#1}}
\newcommand{\rmod}[1]{\Sf{Mod}_{#1}}

\renewcommand{\hom}[3]{\mathrm{Hom}_{#1}\left(#2,\,#3\right)}

\newcommand{\td}[1]{\widetilde{#1}}

\newcommand{\bara}[1]{\overline{#1}}

\newcommand{\lr}[1]{\left(\underset{}{} #1 \right)}
\newcommand{\Lr}[1]{\left[\underset{}{} #1 \right]}

\newcommand{\End}[2]{\mathrm{End}_{#1}(#2)}

\newcommand{\coring}[1]{\mathfrak{#1}}

\newcommand{\rcomod}[1]{ \mathsf{Comod}{}_{#1}}
\newcommand{\frcomod}[1]{ \mathsf{comod}{}_{#1}}

\newcommand{\LR}[1]{\left\{\underset{}{} #1 \right\}}

\newcommand{\I}{\mathbb{I}}

\newcommand{\B}[1]{\boldsymbol{#1}}

\newcommand{\Rep}[1]{\boldsymbol{\Rr}\mathbf{ep}_{\Bbbk}(\mathscr{#1})}
\newcommand{\REP}[1]{\boldsymbol{\Rr}\mathbf{ep}_{\Bbbk}(#1)}
\newcommand{\tRep}[1]{\boldsymbol{\Rr}\mathbf{ep}_{\Bbbk}^{\scriptscriptstyle{top}}(\mathscr{#1})}
\newcommand{\Repf}[1]{\rR_{\Bbbk}(\mathscr{#1})}
\newcommand{\RepF}[1]{\rR_{\Bbbk}(#1)}
\newcommand{\tRepf}[1]{\rR_{\Bbbk}^{\scriptscriptstyle{top}}(\mathscr{#1})}
\newcommand{\algb}[1]{\mathrm{M}_{\Bbbk}(#1_{\scriptscriptstyle{0}})}
\newcommand{\algt}[1]{\mathrm{M}_{\Bbbk}(#1_{\scriptscriptstyle{1}})}
\newcommand{\calgb}[1]{\mathrm{C}_{\Bbbk}(#1_{\scriptscriptstyle{0}})}
\newcommand{\calgt}[1]{\mathrm{C}_{\Bbbk}(#1_{\scriptscriptstyle{1}})}
\newcommand{\grpd}[1]{\xymatrix{#1_{1}\ar@<1.2ex>@{->}|-{\scriptstyle{\Sf{s}}}[r] \ar@<-1.2ex>@{->}|-{\scriptstyle{\Sf{t}}}[r] & \ar@{->}|-{\scriptstyle{\iota}}[l] #1_{0} }}
\newcommand{\proj}[1]{\mathsf{proj}(\mathrm{M}_{\Bbbk}(#1_{\scriptscriptstyle{0}}))}
\newcommand{\cproj}[1]{\mathsf{proj}(\mathrm{C}_{\Bbbk}(#1_{\scriptscriptstyle{0}}))}
\newcommand{\Gama}[1]{\B{\Gamma}(#1)}
\newcommand{\thom}[2]{\mathrm{T}_{\scriptscriptstyle{{#1}},\,\scriptscriptstyle{{#2}}}}
\newcommand{\tend}[1]{\mathrm{T}_{\scriptscriptstyle{{#1}}}}
\newcommand{\Grpd}{\mathsf{Grpd}}
\newcommand{\CHAlgd}{\mathsf{CHAlgd}}
\newcommand{\GTCHAlgd}{\mathsf{GTCHAlgd}}
\newcommand{\chara}{\mathscr{X}_{\Bbbk}}
\newcommand{\Alg}[2]{#1(#2)}
\newcommand{\Tanna}{\Sf{Tanna}_{\Bbbk}}
\newcommand{\Coring}[1]{#1\text{-}\Sf{Corings}}


\newcommand{\fF}{\mathscr{F}}
\newcommand{\gG}{\mathscr{G}}
\newcommand{\hH}{\mathscr{H}}

\newcommand{\jJ}{\mathscr{J}}

\newcommand{\lL}{\mathscr{L}}

\newcommand{\pP}{\mathscr{P}}

\newcommand{\rR}{\mathscr{R}}
\newcommand{\sS}{\mathscr{S}}

\newcommand{\vV}{\mathscr{V}}

\newcommand{\Aa}{\mathcal{A}}
\newcommand{\Bb}{\mathcal{B}}

\newcommand{\Ee}{\mathcal{E}}
\newcommand{\Ff}{\mathcal{F}}

\newcommand{\Hh}{\mathcal{H}}
\newcommand{\Ii}{\mathcal{I}}

\newcommand{\Kk}{\mathcal{K}}

\newcommand{\Pp}{\mathcal{P}}

\newcommand{\Rr}{\mathcal{R}}

\newcommand{\Tt}{\mathcal{T}}
\newcommand{\Uu}{\mathcal{U}}
\newcommand{\Vv}{\mathcal{V}}




\begin{document}
\allowdisplaybreaks

\title[Representative functions on groupoids and duality with Hopf algebroids]{Representative functions on discrete groupoids  and duality with Hopf algebroids.}
\author{Laiachi El Kaoutit}
\address{Universidad de Granada, Departamento de \'{A}lgebra. Campus De Ceuta. Facultad de Educaci\'{o}n y Humanidades.
Cortadura Del Valle s/n. E-51001 Ceuta, Spain}
\email{kaoutit@ugr.es}
\urladdr{http://www.ugr.es/~kaoutit}
\date{\today}
\subjclass[2010]{Primary 18B40, 20L05, 20L15; Secondary 22A22, 14R20.}
\thanks{Partially supported by grants  MTM2010-20940-C02-01 from the Ministerio de Educaci\'{o}n y Ciencia of Spain and  FQM-266, P11-FQM-7156  from Junta de Andalucía.}

\begin{abstract}
The aim of this paper is to establish a duality between the category of discrete groupoids and the category of geometrically transitive  commutative  Hopf algebroids in the sense of P. Deligne and A. Bruguières. In one direction we have the usual  contravariant functor which assigns to each Hopf algebroid its characters groupoid (the fiber groupoid at the ground field).  In the other direction we construct the contravariant functor which associated to each discrete groupoid its Hopf algebroid of representative functions.  
This duality  extends the well known duality between discrete groups and commutative Hopf algebras,  and also sheds light on a new  approach to Tannaka-Krein duality for compact topological groupoids. Our results are supported by several illustrative examples including  topological ones.
\end{abstract}

\keywords{Topological Groupoids; Geometrically Transitive Hopf algebroids; Representative Functions; Monoidal Categories; Tannakian Categories; Tannaka-Krein Duality; Equivariant Vector Bundles.}
\maketitle

\pagestyle{headings}

\section*{Introduction}
The notion of smooth representative functions on a Lie group is a  powerful tool  whose usefulness is too remarkable  in the theory of Lie groups.
One of the aspects, perhaps the most impressive, where this notion  is evidently  indispensable,   is  Chevalley's reformulation of Tannaka's duality theorem  which is itself a  generalization of the classical Pontryagin duality theorem. Namely, it was shown by  Chevalley in \cite[Theorem 5, page 211]{Chevalley:book} that Tannaka's duality theorem for a compact Lie group  can be formulated as a theorem on the  characters group of its Hopf algebra of (complex valued) representative functions. This was a very elegant way to provide  any compact Lie group with a structure of algebraic real linear group.

These in fact are particular aspects  of a general theory  on compact topological groups and  more general theory on discrete groups. Specifically, it is well known from Hochschild's result \cite[Theorem 3.5, page 30]{Hochschild:book1} that the  contravariant functor which assigns to each compact topological group its algebra of (real valued) continuous representative functions, establishes an anti-equivalence between  the category of compact topological groups and  the category of commutative real Hopf algebras with gauge (a Hopf integral coming from the Haar measure) and with  dense characters group in the linear dual. When restrict to  finitely generated real Hopf algebras one obtains an anti-equivalence with the category of compact Lie groups.  Since any real Hopf algebra is a filtrated limit of its finitely generated Hopf subalgebras, one deduces from the previous anti-equivalence that any compact topological group is a projective limit of compact Lie groups, and so it is isomorphic to a  closed subgroup of a product of algebraic real linear groups.

In the more abstract case of discrete groups, one only  have a duality between groups and commutative Hopf algebras. Precisely, denote by $\mathsf{Grp}$ the category of groups and by $\mathsf{CHAlg}_{\Bbbk}$ the category of commutative Hopf algebras over a ground field $\Bbbk$.  Consider the contravariant functors $\rR_{\Bbbk}:\mathsf{Grp} \longrightarrow \mathsf{CHAlg}_{\Bbbk}$ (sending any group to its $\Bbbk$-valued representative functions) and $\chi_{\Bbbk}:  \mathsf{CHAlg}_{\Bbbk}\longrightarrow \mathsf{Grp}$ (associating to  any Hopf algebra its characters group). Then the pair $(\rR_{\Bbbk}, \chi_{\Bbbk})$ establishes a duality between   the categories $\mathsf{Grp}$ and $\mathsf{CHAlg}_{\Bbbk}$. This means that  there are natural transformations $id_{\scriptscriptstyle{\mathsf{Grp}}} \longrightarrow \chi_{\Bbbk} \rR_{\Bbbk}$ and $id_{\scriptscriptstyle{\mathsf{CHAlg}_{\Bbbk}}} \longrightarrow \rR_{\Bbbk} \chi_{\Bbbk}$ satisfying the usual triangles.  In other words, we have a natural isomorphism 
$
{\rm Hom}_{\scriptstyle{\Sf{CHAlg}_{\Bbbk}}}\big(- ;\rR_{\Bbbk}(+) \big)\, \cong \,  {\rm Hom}_{\scriptstyle{\Sf{Grp}}}\big(+; \chi_{\Bbbk}(-)\big)
$.

With the pertinent notations, we can see that all the above aspects are then captured in the following diagram:
\begin{equation*} 
\xymatrix@R=30pt{ \scriptstyle{\mathsf{Grp}}  \ar@/^1.1pc/@{->}|-{\scriptscriptstyle{\rR_{\Bbbk}}}[rr] \ar@{}|-{\scriptscriptstyle{\text{duality}}}[rr] && \scriptstyle{\mathsf{CHAlg}_{\Bbbk}}  \ar@/^1.1pc/@{->}|-{\scriptscriptstyle{\chi_{\Bbbk}}}[ll]  \\  \scriptstyle{\mathsf{CTGrp} } \ar@/^1.1pc/@{->}[rr] \ar@{}|-{\scriptscriptstyle{\text{anti-equivalence}}}[rr] \ar@{_{(}->}[u] &&  \ar@/^1.1pc/@{->}[ll] \scriptstyle{\td{\mathsf{CHAlg}}_{\mathbb{R}}} \ar@{^{(}->}|-{\scriptscriptstyle{\Bbbk=\mathbb{R}}}[u] \\ \scriptstyle{\mathsf{CLGrp}}  \ar@/^1.1pc/@{->}[rr] \ar@{}|-{\scriptscriptstyle{\text{anti-equivalence}}}[rr] \ar@{_{(}->}[u] &&  \ar@/^1.1pc/@{->}[ll] \scriptstyle{\td{\mathsf{CHalg}}_{\mathbb{R}}} \ar@{^{(}->}[u]  }\vspace{0,2cm}
\end{equation*}
which in fact provides a kind of dictionary between the geometric properties of groups and algebraic properties of theirs associated commutative Hopf algebras.  For instance,  given a compact topological group $G$  and takes the complex numbers as a base field, then  there is a Galois correspondence between closed subgroups of $G$ and  subalgebras of $\rR_{\mathbb{C}}(G)$ that contain the constants and are stable under conjugation and translations, see \cite[\S 5]{Hochschild/Mostow:1957}.  \smallskip

As was expounded in \cite{Brown:1987, Weinstein:1996}, the theory of groupoids in all its facets is,  in some sense, a modern replacement of the theory of groups, which  in fact supplies new techniques and tools for studying the symmetry and the structure of fairly complicated objects whose analysis through groups theory does not provide enough information.  Under this point of view, a modern replacement  of the algebraic contrapart of groups, i.e., commutative Hopf algebras, should be then  commutative Hopf algebroids. In this way the above functor of characters group,  should be then substituted by the  functor of \emph{characters groupoid}, which assigns to each Hopf algebroid its characters groupoid (i.e., the fiber groupoid at the ground field).

Employing  the functor of characters groupoid, it is then natural and reasonable to propose a suitable generalization of the duality and anti-equivalences in the above diagram, to the contexts of  groupoids and commutative Hopf algebroids.  Thus, 
in the left hand-side column of an analogue diagram for groupoids, it is clear what the categories should be: discrete groupoids, compact topological groupoids and compact Lie groupoids.
The main problem then lies on finding the adequate full subcategories of the category of commutative Hopf algebroids, in the right hand-side column, over which the functor of characters establishes a duality or an anti-equivalence (depending on the codomain category of groupoids).   In the case of compact  topological groupoids and specially for compact Lie groupoid, the search of such a  subcategory is  up to now more truncated and too difficult. Nevertheless, we solve hereby this problem for discrete groupoids.\smallskip

The chief concerns of this paper is then to establish  a duality between the category of  discrete groupoids $\Grpd$ and a full subcategory of commutative Hopf algebroids. Namely, the category of geometrically transitive Hopf algebroids  $\Sf{GTCHAlgd}_{\Bbbk}$, see the forthcoming subsection for definitions. Explicitly, our main theorem sated below as Theorem \ref{thm:main} which we quote here, says:

{\renewcommand{\thetheorem}{\bf{I}}
\begin{theorem}\label{thm:mainA}
Let $\Bbbk$ be a ground field. Then the contravariant functors of $\Bbbk$-characters groupoid and $\Bbbk$-valued representative functions 
$$\xymatrix{\chara: \Sf{GTCHAlgd}_{\Bbbk} \ar@<-0.5ex>@{->}[r] & \Grpd: \rR_{\Bbbk} \ar@<-0.5ex>@{->}[l]  }$$ 
establish a duality between the category of  geometrically transitive commutative Hopf algebroids and the category of discrete groupoids. That is, there is a natural isomorphism:
$$
{\rm Hom}_{\scriptstyle{\Sf{GTCHAlgd}_{\Bbbk}}}\lr{{(R,\Hh)}\, ;\,{\big(\algb{\gG},\Repf{G}\big)} }\,\, \cong \,\,  {\rm Hom}_{\scriptstyle{\Grpd}}\lr{{\gG}\, ; \,{\chara(R,\Hh)}}
$$
for every $(R,\Hh) \in \Sf{GTCHAlgds}_{\Bbbk}$ and $\gG \in \Grpd$ with  base algebra $\algb{\gG}={\rm Maps}(\gG_0,\Bbbk)$.
\end{theorem}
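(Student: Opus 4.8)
The plan is to exhibit the asserted isomorphism as the currying (``exponential law'') bijection between groupoid morphisms and pairs of $\Bbbk$-algebra maps, and then to check that under this transposition the functoriality axioms of a groupoid morphism correspond exactly to the axioms of a morphism of Hopf algebroids. Recall that an object of $\chara(R,\Hh)$ is a $\Bbbk$-point $R\to\Bbbk$ of the base and an arrow is a character $\Hh\to\Bbbk$, its source and target being the precompositions with the structure maps $s,t\colon R\to\Hh$. Thus a morphism of groupoids $\phi\colon\gG\to\chara(R,\Hh)$ amounts to a map $\phi_0\colon\gG_0\to{\rm Alg}_\Bbbk(R,\Bbbk)$ on objects and a map $\phi_1\colon\gG_1\to{\rm Alg}_\Bbbk(\Hh,\Bbbk)$ on arrows, subject to compatibility with units, source, target, composition and inversion.

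First I would set up the two mutually inverse assignments at the level of underlying sets. Sending $\phi$ to the transposes $\widehat{\phi}_0(r)(a)=\phi_0(a)(r)$ and $\widehat{\phi}_1(h)(g)=\phi_1(g)(h)$ produces a pair of $\Bbbk$-algebra maps $\widehat{\phi}_0\colon R\to{\rm Maps}(\gG_0,\Bbbk)=\algb{\gG}$ and $\widehat{\phi}_1\colon\Hh\to{\rm Maps}(\gG_1,\Bbbk)$; conversely a pair $(\psi_0,\psi_1)$ of algebra maps is turned back into a groupoid morphism by evaluation, $a\mapsto\big(r\mapsto\psi_0(r)(a)\big)$ and $g\mapsto\big(h\mapsto\psi_1(h)(g)\big)$. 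These operations are visibly inverse to each other, so the whole content lies in matching the structural constraints on the two sides. Next I would translate each groupoid axiom into the corresponding Hopf-algebroid identity: compatibility of $\phi$ with source and target becomes the relation between $\widehat{\phi}_1\circ s$, $\widehat{\phi}_1\circ t$ and $\widehat{\phi}_0$ followed by the two structure maps of $\rR_\Bbbk(\gG)$ induced by $\Sf{s},\Sf{t}\colon\gG_1\to\gG_0$; compatibility with identities becomes compatibility with the counits; multiplicativity under composition, written through the convolution product on characters dual to the comultiplication of $\Hh$, becomes the statement that $\widehat{\phi}_1$ intertwines $\Delta$ with the comultiplication of $\Repf{G}$ dual to the partial multiplication $\gG_1\times_{\gG_0}\gG_1\to\gG_1$; and compatibility with inversion becomes compatibility with the antipodes. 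Here I would lean on the explicit description of $\rR_\Bbbk(\gG)$ and its structure maps established earlier in the paper, so that each verification is a direct, if tedious, diagram chase.

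The main obstacle, and the step where geometric transitivity is genuinely used, is to show that the transpose $\widehat{\phi}_1\colon\Hh\to{\rm Maps}(\gG_1,\Bbbk)$ actually takes values in the subalgebra $\Repf{G}$ of representative functions, so that the above bijection restricts to the displayed isomorphism between morphisms in $\Sf{GTCHAlgd}_\Bbbk$ and morphisms in $\Grpd$. For this I would invoke the structure theory of geometrically transitive Hopf algebroids: such an $\Hh$ is generated over $R\otimes_\Bbbk R$ by the matrix coefficients of a dualizable comodule, and each such generator is carried by $\widehat{\phi}_1$ to a matrix coefficient of the associated finite-dimensional representation of $\gG$ (pulled back along $\phi$), hence to a representative function; since $\Repf{G}$ is a subalgebra stable under the structure maps, the whole image lands there.

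Finally, the bijection must be natural in both $(R,\Hh)$ and $\gG$, which follows formally once the transposition is recognised as the ${\rm Hom}$--${\rm Maps}$ adjunction, together with the observation that evaluation at objects and at arrows always yields genuine $\Bbbk$-characters, so that the unit $\gG\to\chara(\rR_\Bbbk(\gG))$ --- the image of the identity under the isomorphism --- is a well-defined natural transformation. I expect the formal matching of axioms and the naturality to be routine, with essentially all the real mathematics concentrated in the ``lands in $\Repf{G}$'' step and, upstream of it, in the earlier verification that $\rR_\Bbbk(\gG)$ is itself geometrically transitive so that the left-hand ${\rm Hom}$ is taken in the correct category.
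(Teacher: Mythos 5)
Your proposal is correct in substance but organizes the proof along a genuinely different route from the paper. You establish the hom-set bijection directly by currying, whereas the paper never writes that bijection down: it constructs a unit $\B{\Theta}_{\gG}:\gG\to\chara(\algb{\gG},\Repf{G})$ (evaluation, via the embedding $\zeta$ of Proposition \ref{prop:zeta}) and a counit $\B{\Omega}_{(R,\Hh)}:(R,\Hh)\to\rR_{\Bbbk}(\chara(R,\Hh))$ (via the reconstruction isomorphism $(R,\Hh)\cong(R,\lL_{\Bbbk}(\Uu_{\Hh}))$ composed with $\lL_{\Bbbk}$ applied to the comodule-to-representation functor $\fF$ of Lemma \ref{lema:F}), and then verifies the two triangle identities (Lemmas \ref{lema:triangulo1} and \ref{lema:triangulo2}) by applying $\lL_{\Bbbk}$ to commutative diagrams of fiber functors. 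The two arguments rest on the same nontrivial inputs: the injectivity of $\zeta$, Brugui\`eres' reconstruction theorem for geometrically transitive Hopf algebroids (which is exactly your ``$\Hh$ is spanned by matrix coefficients of objects of $\frcomod{\Hh}$''), and the passage from comodules to representations of the characters groupoid --- which you invoke only implicitly as ``the associated finite-dimensional representation'' but which requires, as the paper notes via Deligne's Corollaire 3.9, that objects of $\frcomod{\Hh}$ have constant rank, a second use of geometric transitivity. What your version buys is an explicit, elementary description of the bijection and a clear isolation of where transitivity enters (the ``lands in $\Repf{G}$'' step); what the paper's version buys is that the verifications reduce to functorial diagrams handled uniformly by $\lL_{\Bbbk}$, avoiding the pointwise axiom-matching. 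Two of your ``routine'' steps deserve a word of care when written out: identifying compatibility with composition with compatibility with $\Delta$ requires both Proposition \ref{prop:zeta}(3) and the injectivity of the canonical map $\Repf{G}\tensor{\algb{\gG}}\Repf{G}\to{\rm M}_{\Bbbk}(\gG_1\times_{\gG_0}\gG_1)$ (available since $\Repf{G}$ is projective over $\algb{\gG}$ by Proposition \ref{prop:GT}); and recognizing your transposes as mutually inverse at the level of $\Repf{G}$, rather than of $\algt{\gG}$, again uses that $\zeta$ is injective. Neither is a gap, merely a point where the details must be supplied.
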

}
The case of discrete groups, that is, the duality in the top of the above diagram,  is then evidently recovered from Theorem {\ref{thm:mainA}}, since we already know that any commutative Hopf $\Bbbk$-algebra  is  geometrically transitive.  

Our techniques of proofs realise heavily on the theory of   Tannakian $\Bbbk$-linear categories. Namely, we first show in Proposition \ref{prop:tannaka} (perhaps a well known result) that  the category of representations of a  given discrete groupoid in finite dimensional $\Bbbk$-vector spaces is  in fact a  Tannakian $\Bbbk$-linear category, see the Appendix where we collect  the essential  definitions and basic properties of this notion.   The resulting commutative Hopf algebroid from the so called the reconstruction process,  is what hereby  refereed to as  \emph{the algebra of representative functions} on discrete groupoid, Corollary and Definition \ref{coro:comatrix}. The terminology is justified by showing that this algebra is in fact isomorphic to a subalgebra of the total algebra  attached to this groupoid, Proposition \ref{prop:zeta}.

This is mostly done in Sections \ref{sec:1} and \ref{sec:2}, where we also treat the case of topological groupoids with compact Hausdorff base space,  by showing  how to construct its Hopf algebroids of continuous representative functions, Remark \ref{rem:RFTops}. Contrary to  the discrete case, the (geometric) nature of this Hopf algebroid still unknown, at least for us. Nevertheless, we think that its construction  surely is  the  first step through a   correct way  to establish  an anti-equivalence between compact topological groupoids and  certain full subcatgeory of commutative Hopf algebroids, Remark  \ref{rem:final}.

Several examples were expounded along the paper, highlighting the case of an algebraic action $\Bbbk$-groupoids ($\Bbbk$ is an infinite field), where we were able to relate the \emph{Equivariant Serre Problem} concerning the triviality of algebraic equivariant bundles \cite{Masuda/Petrie:1995}, with some problems of its Hopf algebroid of polynomial representative functions, see Example \ref{exam:algGroups} (and also Remark \ref{rem:vb}).

\subsection*{Basic  notions and notations.}\label{ssec:0}

We fix a ground field $\Bbbk$, all rings are considered to be $\Bbbk$-algebras. The set of all $\Bbbk$-algebra maps $R \to S$ is denoted by $\Alg{R}{S}$. Since we are interested in (small) groupoids  with  set of objects in some cases is of the form $\Alg{R}{\Bbbk}$ for a commutative $\Bbbk$-algebra $R$. It is reasonable to assume that the set of object is a non empty set and hence to assume that all handled  commutative $\Bbbk$-algebras $R$   have the property that $\Alg{R}{\Bbbk} \neq \emptyset$. 

Let $\theta: R \to S$ be a morphism of commutative $\Bbbk$-algebras. We denote   
by $\theta^*: \rmod{R} \to \rmod{S}$ the extension functor, i.e., $\theta^*(-)=-\tensor{R}S$ and by $\theta_{*}: \rmod{S}\to \rmod{R}$ the restriction functor. 

All Hopf algebroids which will be considered here are commutative. Recall from \cite[A.I]{Ravenel:1986} that a \emph{commutative Hopf algebroid} is a pair of commutative $\Bbbk$-algebras $(R,\Hh)$  with $R\neq 0$ and $\Bbbk$-algebra maps 
$$\xymatrix@C=50pt{R\ar@<1.2ex>@{->}|-{\scriptstyle{\Sf{s}}}[r] \ar@<-1.2ex>@{->}|-{\scriptstyle{\Sf{t}}}[r] & \ar@{->}|-{ \scriptstyle{\varepsilon} }[l] \Hh}$$
where $\Sf{s}$ is the \emph{source} map, $\Sf{t}$ is the \emph{target} and $\varepsilon$ is the counit. Together with a \emph{comultiplication} $\Delta: \Hh \to \Hh\tensor{R}\Hh$, where $\Hh$ is considered as an $R$-bimodule with $\Sf{t}$ acting on the left and $\Sf{s}$ on the right, as well as an \emph{antipode} $\sS: \Hh \to \Hh$.  All these maps are required to satisfy the usual compatibilities conditions in the Hopf context (dual in some sense to a groupoid axioms). In this way Hopf algebroids are then presheaves of groupoids on affine schemes.  Under this point of view, there is a  relation between the category of comodules and the category of quasi-coherent sheaves with a groupoid action,  as was detailed in \cite{Hovey:2002} and \cite{Deligne:1990}.
 
For a given Hopf algebroid $(R,\Hh)$ we denote by $\rcomod{\Hh}$ its category of right comodules and by $\Uu_{\Hh}: \rcomod{\Hh} \to \rmod{R}$ the forgetful functor. This is  a symmetric  monoidal category with $\Uu_{\Hh}$ a strict monoidal functor. The  category of left $\Hh$-comodules is  monoidally isomorphic to  right $\Hh$-comodules, and so enjoys similar properties.
The full subcategory of $\rcomod{\Hh}$ whose objects are finitely generated as $R$-modules is denoted by $\frcomod{\Hh}$. It is well known that $\Hh_R$ (or ${}_R\Hh$) is flat iff $\rcomod{\Hh}$ is a Grothendieck category and $\Uu_{\Hh}$ is an  exact functor.  Under any one of these equivalent conditions $\Hh$ becomes  faithfully flat simultaneously as  left and right $R$-module.

A \emph{morphism of Hopf algebroids} is  a pair $\alpha=(\alpha_0,\alpha_1): (R,\Hh) \to (S,\Kk)$ of $\Bbbk$-algebra maps which are in a canonical way compatible with both Hopf algebroids structures. The \emph{extended morphism} of $\alpha$, is given by  $(id_S, \td{\alpha_1}): (S,S\tensor{R}\Hh\tensor{R}S) \to (S,\Kk)$ sending $p\tensor{R}h\tensor{R}q \mapsto \Sf{s}(p)\alpha_1(h)\Sf{t}(q)$, where $(S,S\tensor{R}\Hh\tensor{R}S)$ is endowed within a canonical structure of Hopf algebroid. 

The morphism $\alpha$ clearly induces a functor called \emph{the induction functor} $\alpha^*:= -\tensor{R}S: \rcomod{\Hh} \to \rcomod{\Kk}$ such that $\Uu_{\Kk} \circ \alpha^* = \alpha_0^* \circ \Uu_{\Hh}$. In other words, the functor $\alpha^*$ is a \emph{lifted functor}  of the functor $\alpha_0^*$. Whether in general a functor  $F:\rcomod{\Hh} \to \rcomod{\Kk}$ is lifted from some functor $\theta^*$, where $\theta: R \to S$ is an algebra map, is a quite  interesting question  from the Morita theory point of view, chiefly when $F$ is asked to be monoidal.  

In this paper we will often use \emph{geometrically transitive Hopf algebroids}, a notion which was introduced by A. Bruguières in \cite{Bruguieres:1994}, see also \cite{Deligne:1990}.
For sake of completeness and audience convenience, we include here the definition of this notion.  Although, as was mentioned above  we will restrict our self here to the commutative case. Recall from \cite[\S 5 Definition in page 5838]{Bruguieres:1994} that a given commutative Hopf algebroid $(R,\Hh)$ with a base ring $R$,  is said to be \emph{semi-transitive} if the category of (say right) comodules over the underlying $R$-coring $\Hh$ satisfies the following three conditions:
\begin{enumerate}[{$\Sf{ST}$}1]
\item Each object in $\frcomod{\Hh}$ is projective as $R$-module.
\item Each object in $\rcomod{\Hh}$ is a filtrate  inductive limit of objects in $\frcomod{\Hh}$.
\item The $\Bbbk$-linear category $\frcomod{\Hh}$ is  locally of finite type.
\end{enumerate}

The Hopf algebroid $(R,\Hh)$ is said to be \emph{geometrically semi-transitive} if $\Hh$ is projective as an $(R\tensor{\Bbbk}R)$-module and satisfies condition $\Sf{ST}$3. By \cite[Proposition 6.2]{Bruguieres:1994}, each geometrically semi-transitive $R$-coring is semi-transitive. In particular \cite[Proposition 6.2(i)]{Bruguieres:1994}
implies that $\Hh_R$ is a projective module whenever  so is the module $\Hh_{R\tensor{\Bbbk}R}$. 
Lastly, $(R,\Hh)$ is said to be \emph{geometrically transitive}\footnote{As was shown in \cite[Théorème 8.2]{Bruguieres:1994} this is equivalent to say that $(\Sf{Spec}(\Hh), \Sf{Spec}(R))$ is  a  transitive affine  $\Bbbk$-groupoid, in the sense of Deligne. That is, $(\Sf{s},\Sf{t}): \Sf{Spec}(\Hh) \to \Sf{Spec}(R) \underset{\scriptsize \Bbbk}{\times} \Sf{Spec}(R)$  is a cover in the \emph{fpqc} topology.}if it is geometrically semi-transitive and ${\rm End}_{\frcomod{\Hh}}(R)\cong\Bbbk$ where $R$ is considered as an $\Hh$-comodule  using the grouplike element $1_\Hh$\footnote{Semi-transitive Hopf algebroid with this property is called \emph{transitive}.}. 
Notice that condition $\Sf{ST}$2 follows   from the assumption  that $\Hh_R$ (or ${}_R\Hh$) is a projective module, either by using the theory of rational modules over the convolution $R$-algebra of $\Hh$  or directly by applying  \cite[Proposition 3.3]{Bruguieres:1994}.

Summing up, the  result \cite[Proposition 7.3]{Bruguieres:1994} says that a commutative Hopf algebroid $(R,\Hh)$ is geometrically transitive iff $\Hh$ is projective and faithfully flat $(R\tensor{\Bbbk}R)$-module, iff $(R_{\scriptscriptstyle{\mathbb{K}}},\Hh_{\scriptscriptstyle{\mathbb{K}}})$ is transitive for any field extension $\Bbbk \to \mathbb{K}$, where $R_{\scriptscriptstyle{\mathbb{K}}}=R\tensor{\Bbbk}\mathbb{K}$ and $\Hh_{\scriptscriptstyle{\mathbb{K}}}=\Hh\tensor{\Bbbk}\mathbb{K}$. Finally, it is noteworthy to mention that it is implicitly shown in \cite[\S 8]{Bruguieres:1994} that in our case, i.e., for commutative Hopf algebroid, a necessary and sufficient condition for $(R,\Hh)$ to be geometrically transitive is to be faithfully flat over  $R\tensor{\Bbbk}R$.

\section{The category of representations of discrete groupoids.}\label{sec:1}
We will recall here  the definition of the category of representations of discrete groupoid in $\Bbbk$-vector spaces. This  is a formal adaptation of the category of sheaves over a presheaf of groupoids on schemes  \cite{Deligne:1990, Breen:1994,Hovey:2002} or that of the category of equivariant sheaves over a topological action groupoids \cite{Bernstein/Lunts:1994,Segal:1968}. In what follows all statements will be given for discrete groupoids. In the  topological case   we limits our self to make some remarks which illustrate where  the difficulty lies in this case and perhaps  offer a well understanding of this situation. 

Our goal is to show that the  category of representations of a  discrete  groupoid in finite dimensional $\Bbbk$-vector spaces, is a Tannakian $\Bbbk$-linear category, see the Appendix for definitions. For topological groupoids we will see that this category  is instate a pseudo-Tannakian $\Bbbk$-linear category.\smallskip

Recall that a groupoid $\gG$ is a small category where each arrow is an isomorphism. The source and target  will be denoted by $\Sf{s}, \Sf{t}: \gG_1 \to \gG_0$, and the  map which assigns to each object its identity arrow is denoted by $\iota: \gG_0 \to \gG_1$. The inverse map and the composition as well as the rest of the axioms are supposed to be  understood. In all what follows,  a groupoid $\gG$ is implicitly assumed  to have a non empty set objects  $\gG_0 \neq \emptyset$. A \emph{morphism} between two  groupoids is a functor between their underlying categories.

\begin{definition}\label{def:rep}
Let $\gG: \xymatrix{\gG_{1}\ar@<1.2ex>@{->}|-{\scriptstyle{\Sf{s}}}[r] \ar@<-1.2ex>@{->}|-{\scriptstyle{\Sf{t}}}[r] & \ar@{->}|-{ \scriptstyle{\iota} }[l] \gG_{0} }$  be a  groupoid.  A \emph{representation} of $\gG$  or  an \emph{$n$-dimensional $\gG$-representation} consists on the following data:
\begin{enumerate}[1)]
\item $\Ee=\bigcup_{x \in G_0}E_x$ a disjoint union of finite dimensional  $\Bbbk$-vector spaces $E_{x}$ such that there exists an $n$-dimensional $\Bbbk$-vector space $V$ and linear isomorphisms $\varphi_x:V \to E_x$, for every $x \in X$.\label{item:1}
\item A family of linear isomorphisms $\{\varrho^{\Ee}_g\}_{g \in \gG_1}$ given by $\varrho^{\Ee}_g: E_{\Sf{s}(g)} \to E_{\Sf{t}(g)}$, for every $g \in \gG_1$ satisfying the cocycle condition. That is,  for every $x \in \gG_0$ and every  pair of composable arrows $g,h \in \gG_1$, we have 
\begin{equation}\label{Eq:cocycle}
\varrho^{\Ee}_{\iota(x)}\,= \,id_{E_x}, \quad \varrho^{\Ee}_{gh}= \varrho^{\Ee}_g \circ \varrho^{\Ee}_h.
\end{equation}\label{item:2}
\end{enumerate} 

A $\gG$-representation will be  denoted by a pair $(\cat{E},\varrho^{\Ee})$ (the corresponding vector space $V$ is implicitly understood) or simply by $\varrho^{\Ee}$ if there is no danger of misunderstanding.

Given a family of finite dimensional $\Bbbk$-vector spaces $\{E_x\}_{x \in \gG_0}$ which satisfies condition  \ref{item:1}) in Definition \ref{def:rep}, we can consider the so called \emph{frame groupoid} $\Phi(\Ee)$ whose set of objects is $\gG_0$ and the set of arrows is the set of linear isomorphisms between the (fibers) $E_x$'s. In this way,  a  $\gG$-representation $(\Ee,\varrho^{\Ee})$ is nothing but a morphism of groupoids (functor) $\varrho: \gG \to \Phi(\Ee)$, where $\varrho(x)=x$ and $\varrho(g)=\varrho^{\Ee}_g$. 
\end{definition}
Take $\Ii=\bigcup_{x \in \gG_0}\Ii_x$, with $\Ii_x=\Bbbk$, for every $x \in \gG_0$ and $\varrho^{\Ii}_g=id_{\Bbbk}$, for every $g \in \gG_1$. Then obviously, $\Ii$ is a $1$-dimensional $\gG$-representation. The zero dimensional $\gG$-representation, is the representation with fibers the zero $\Bbbk$-vector space. 

Consider two $\gG$-representations $(\Ee,\varrho^{\Ee})$ and $(\Ff, \varrho^{\Ff})$. A \emph{morphism of representation} from $(\Ee,\varrho^{\Ee})$ to $(\Ff, \varrho^{\Ff})$ is a family of linear maps $\{\alpha_x\}_{x \in \gG_0}$, $\alpha_x: E_x \to F_x$  rendring commutative the following diagrams
\begin{equation}\label{Eq:diag}
\xymatrix@C=50pt{  E_{\Sf{s}(g)} \ar@{->}^-{\varrho^{\Ee}_g}[rr]  \ar@{->}_-{\alpha_{\Sf{s}(g)}}[d] & & E_{\Sf{t}(g)} \ar@{->}^-{\alpha_{\Sf{t}(g)}}[d] \\  F_{\Sf{s}(g)} \ar@{->}^-{\varrho^{\Ff}_g}[rr] & & F_{\Sf{t}(g)}  }
\end{equation}

We denote by $\Rep{G}$ \emph{the category of $\gG$-representations in $\Bbbk$-vector spaces}. The $\Bbbk$-vector space of morphisms in this category will be denoted as usual by $\hom{\Rep{G}}{\varrho^{\Ee}}{\varrho^{\Ff}}$. 
Before going on  given the properties of this category, let us discuss  the topological case and give some elementary examples.

\begin{remark}\label{rem:rep}
Up to now we could perfectly define the category of representations of topological groupoids  by taking the category of locally trivial  vector bundles over $\gG_0$. Precisely, given a topological groupoid $\gG$ (see for instance \cite[Definition 2.1.]{Renault:1980}), a $\gG$-representation  is a three-tuple $(\Ee,\pi_{\Ee}, \varrho^{\Ee})$ where $\pi_{\Ee}: \Ee \to \gG_0$ is a locally trivial real (or complex) vector bundle and $\varrho^{\Ee}: \Sf{s}^*\Ee \to \Sf{t}^*\Ee$ is an isomorphism  between the induced (or pull-back) vector bundles which satisfies the cocycle conditions. A morphism between two $\gG$-representations, is then  a morphism of vector bundles $\alpha: \Ee \to \Ff$  compatible with the actions, that is, satisfies $\Sf{t}^*\alpha \circ \varrho^{\Ff} \,=\, \varrho^{\Ee } \circ \Sf{s}^*\alpha$. 

This category is denoted by $\tRep{G}$, and clearly we have  a functor $\tRep{G} \to \Rep{G}$.  Of course if we consider our discrete groupoid as a discrete topological groupoids and the base field a discrete field (i.e.~each finite dimensional vector space is discrete), then clearly Definition \ref{def:rep} is a particular instance of this one. That is, the previous functor is an equality. 

There are also in the literature others specific notions of representations on some classes of topological groupoids. For example in the case of locally compact  or Lie groupoids, one can speaks about continuous representations, see  \cite{Westman:1967,Renault:1987,Amini:2007,Bos:2011}. 
\end{remark}

\begin{example}\label{exam:action}
Let $\gG$ be an \emph{action groupoid}\footnote{\emph{Semi-direct groupoid} in the terminology of \cite{Brown:1987}.}. This means that we are given a group $G$ and  say a left  $G$-set $X$ with action  $G\times X \to X$, $(g,x) \mapsto gx$, where we set  $\gG_1=G\times X$ and $\gG_0=X$ with $\Sf{s}(g,x)=x$, $\Sf{t}(g,x)=gx$ and $\iota(x)=(e,x)$ ($e$ is the neutral element of $G$). Thus a $\gG$-representation is a family of $n$-dimensional $\Bbbk$-vector space $\{E_x\}_{x \in X}$ (each of them is isomorphic to some fixed $n$-dimensional $\Bbbk$-vector space) and there is linear isomorphisms $\varrho^{\Ee}_g:E_x \to E_{gx}$, for every $g \in G$. In other words the fibers are 'equivariant' under the action of $G$. 

If $G$ is a topological group and $X$ is a topological space such that the $G$-action is a continuous map ($\Bbbk$ is the field of real or complex numbers). Then the category $\tRep{G}$  described in Remark \ref{rem:rep},  is nothing but the category of \emph{$G$-equivariant vector bundles} (or \emph{$G$-vector bundles}). In particular, if $X=\{*\}$ is a one point set, then the category $\tRep{G}$ coincides with category of continuous $G$-representations. Of course, when  $G$ is a discrete group, $X=\{*\}$ and $\Bbbk$ is any field, then $\B{\Rr {\rm ep}_{\Bbbk}}(G)$  obviously coincides with the classical category of $G$-representations in finite dimensional $\Bbbk$-vector spaces.
\end{example}

\begin{example}[The additive   and multiplicative  $\Bbbk$-groupoids $\gG^{\scriptscriptstyle{a, -}}$ and $\gG^{\scriptscriptstyle{m, -}}$]\label{exam:additive}
Fix a commutative $\Bbbk$-algebra $A$. Given another commutative $\Bbbk$-algebra $R$  we consider the following small category. The set of objects is $A(R)$ the set of all $\Bbbk$-algebras maps from $A$ to $R$. A morphism $f: \sigma \to \gamma$ between two objects $\sigma , \gamma \in A(R)$ is given by an element $r(f) \in R$. The identity arrow of an object $\sigma$ is given by the element $r(1_{\sigma})=0$. The composition is the sum $r(f\circ g)=r(f) + r(g)$, and each arrow is invertible by taking $r(f^{-1})=-r(f)$. We refer to this groupoid as \emph{the additive groupoid of $R$ over $A$} and denoted by $\gG^{\scriptscriptstyle{a, A}}(R)$. 
The \emph{multiplicative groupoid $\gG^{\scriptscriptstyle{m, A}}(R)$ of $R$ over $A$} is constructed  by taking the same set of object $A(R)$ where a morphism  $f: \sigma \to \gamma$ is  given in this case by an element $s(f) \in \Uu(R)$ of the unit group of $R$. The identity arrow of $\sigma$ is $s(1_\sigma)=1_R$ the identity element of $R$, the composition is the multiplication $s(f\circ g)=s(f)s(g)$, and each arrow $f$ have an inverse given by $s(f^{-1})=s(f)^{-1}$. 

It is easily seen that    $\gG^{\scriptscriptstyle{a, A}}$ and $\gG^{\scriptscriptstyle{m, A}}$ are affine  $\Bbbk$-groupoids, respectively, represented by the geometrically transitive Hopf algebroids $\big(A, (A\tensor{\Bbbk}A)[T]\big)$ and $\big(A, (A\tensor{\Bbbk}A)[T, T^{-1}]\big)$.  Obviously, when $A=\Bbbk$, one recover the notions of additive and multiplicative  affine $\Bbbk$-groups $G_a$ and $G_m$. 

Assume now that $R$ and $A$ are finite dimensional $\Bbbk$-algebras and consider the multiplicative groupoid  $\gG^{\scriptscriptstyle{m, A}}(R)$ of $R$ over $A$. For any object $\sigma \in A(R)$ we put $R_{\sigma}=R$ as a $\Bbbk$-vector space.  Take the 'vector bundle' $\Rr:=\cup_{\sigma \in R(A)} R_{\sigma}$ and for any arrow $f:\sigma \to \gamma$ in $\gG^{\scriptscriptstyle{m, A}}(R){}_1$, set $\varrho^{\Rr}_f: R_{\sigma} \to R_{\gamma}$ by sending $x \mapsto s(f)x$. Then it is clear that $(\Rr, \varrho^{\Rr})$ is a $\gG^{\scriptscriptstyle{m, A}}(R){}$-representation.
\end{example}

Now, we come back to the properties of the category of representations of a groupoid. Here are some  basic well known facts in this category.

\begin{lemma}\label{lema:rep}
Let $\gG$ be a groupoid. Then its category of representations $\Rep{G}$, is a $\Bbbk$-linear  abelian  symmetric rigid (or autonomous) monoidal category. 
\end{lemma}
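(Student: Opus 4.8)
The plan is to push every piece of structure down to the fibers and to borrow it from the category of finite dimensional $\Bbbk$-vector spaces. As observed right after Definition \ref{def:rep}, a $\gG$-representation is the same thing as a functor $\varrho\colon\gG\to\Phi(\Ee)$, i.e. a functorial assignment $x\mapsto E_x$ together with isomorphisms $\varrho^{\Ee}_g$ obeying the cocycle condition \eqref{Eq:cocycle}, and a morphism is a natural transformation whose components $\alpha_x$ are constrained by \eqref{Eq:diag}. Accordingly I would perform each construction fiber by fiber; the only genuine verifications are that the fiberwise structure maps still satisfy \eqref{Eq:cocycle} (so that they define honest representations) and that the candidate arrows commute with the $\varrho_g$'s (so that they are morphisms in $\Rep{G}$). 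Every axiom—biproduct, abelianness, pentagon, hexagon and the triangular identities—then holds in $\Rep{G}$ simply because it already holds pointwise in $\Bbbk$-vector spaces.

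First I would record the $\Bbbk$-linear and abelian structure. The space $\hom{\Rep{G}}{\varrho^{\Ee}}{\varrho^{\Ff}}$ is a $\Bbbk$-subspace of $\prod_{x\in\gG_0}\hom{\Bbbk}{E_x}{F_x}$ under fiberwise operations, composition is bilinear, and the zero dimensional representation is a zero object. The biproduct is $(\Ee\oplus\Ff)_x=E_x\oplus F_x$ with $\varrho^{\Ee\oplus\Ff}_g=\varrho^{\Ee}_g\oplus\varrho^{\Ff}_g$, which visibly inherits \eqref{Eq:cocycle}. For a morphism $\alpha=\{\alpha_x\}$ I set $(\ker\alpha)_x=\ker(\alpha_x)$ and $(\mathrm{coker}\,\alpha)_x=\mathrm{coker}(\alpha_x)$: commutativity of \eqref{Eq:diag} shows $\varrho^{\Ee}_g$ maps $\ker(\alpha_{\Sf{s}(g)})$ into $\ker(\alpha_{\Sf{t}(g)})$, and running the same argument for $g^{-1}$, whose structure map is $(\varrho^{\Ee}_g)^{-1}$, shows the restriction is an isomorphism; dually for cokernels. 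Thus $\ker\alpha$ and $\mathrm{coker}\,\alpha$ are again representations, and the abelian axiom that the canonical map $\mathrm{coim}(\alpha)\to\mathrm{im}(\alpha)$ is invertible is checked fiberwise, where it is the familiar isomorphism of $\Bbbk$-vector spaces. One point deserves attention for closure: the fiber dimension of $\ker\alpha$ is constant only along each orbit of $\gG$, so Definition \ref{def:rep} must be read with the common dimension allowed to vary from one connected component to another, i.e. $\Rep{G}$ comprises representations of all finite dimensions.

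Next I would install the symmetric monoidal structure. Put $(\Ee\tensor{\Bbbk}\Ff)_x=E_x\tensor{\Bbbk}F_x$ with $\varrho^{\Ee\otimes\Ff}_g=\varrho^{\Ee}_g\tensor{\Bbbk}\varrho^{\Ff}_g$; bifunctoriality of $\tensor{\Bbbk}$ yields \eqref{Eq:cocycle} and makes $\otimes$ a bifunctor on morphisms, with unit the representation $\Ii$ introduced right after Definition \ref{def:rep}. The associativity and unit constraints are taken to be the fiberwise ones of $\Bbbk$-vector spaces; they are morphisms of representations precisely because $\tensor{\Bbbk}$ is functorial, so they intertwine the tensored cocycles, and the pentagon and unit axioms descend fiberwise. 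The symmetry $\tau_{\Ee,\Ff}$ is the fiberwise flip $E_x\tensor{\Bbbk}F_x\to F_x\tensor{\Bbbk}E_x$; its naturality in $\Bbbk$-vector spaces shows it commutes with $\varrho^{\Ee}_g\tensor{\Bbbk}\varrho^{\Ff}_g$, and the hexagon and involutivity axioms again hold because they hold in each fiber.

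Finally rigidity, which I expect to be the most delicate step. For a representation $\Ee$ I take the dual $(\Ee^{\vee})_x=\hom{\Bbbk}{E_x}{\Bbbk}$ with structure maps $\varrho^{\Ee^{\vee}}_g=\bigl((\varrho^{\Ee}_g)^{-1}\bigr)^{\!*}\colon E_{\Sf{s}(g)}^{*}\to E_{\Sf{t}(g)}^{*}$, the \emph{inverse}-transpose being forced in order both to land in the correct fiber and to restore the covariant variance. Verifying \eqref{Eq:cocycle} for $\Ee^{\vee}$ is exactly where care is needed: transposition reverses composition, giving $(\varrho^{\Ee}_{gh})^{*}=(\varrho^{\Ee}_h)^{*}(\varrho^{\Ee}_g)^{*}$, while inversion reverses it a second time, so the two reversals cancel and one gets $\varrho^{\Ee^{\vee}}_{gh}=\varrho^{\Ee^{\vee}}_g\circ\varrho^{\Ee^{\vee}}_h$. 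Since each $E_x$ is finite dimensional, the fiberwise evaluation $\Ee^{\vee}\tensor{\Bbbk}\Ee\to\Ii$ and coevaluation $\Ii\to\Ee\tensor{\Bbbk}\Ee^{\vee}$ exist, are morphisms of representations by the same transpose bookkeeping, and satisfy the zig-zag identities because these hold in $\Bbbk$-vector spaces; hence $\Ee^{\vee}$ is a two-sided dual and $\Rep{G}$ is autonomous. The genuine obstacle throughout is therefore not any individual axiom, all of which descend from $\Bbbk$-vector spaces, but the consistent bookkeeping of the cocycle condition under each operation—most pointedly the order reversal forcing the inverse-transpose for duals—together with the closure observation for kernels and cokernels noted above.
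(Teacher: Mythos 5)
Your proof is correct and follows essentially the same route as the paper: all structure is transported fiberwise from finite dimensional $\Bbbk$-vector spaces, with the tensor product as in \eqref{Eq:otimes} and the dual via the inverse-transpose exactly as in \eqref{Eq:star} (note $\bigl((\varrho^{\Ee}_g)^{-1}\bigr)^{*}=\bigl[(\varrho^{\Ee}_g)^{*}\bigr]^{-1}$, so your formula agrees with the paper's). Your one addition --- that closure under kernels and cokernels forces the common fiber dimension to be read per connected component rather than globally --- is a legitimate subtlety that the paper's own one-line treatment of $\mathrm{Ker}(\alpha)$ and $\mathrm{Coker}(\alpha)$ passes over in silence, and it is worth keeping.
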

\begin{proof}
The fact that $\Rep{G}$ is an abelian category, follows directly from $\Bbbk$-vector spaces. Thus, the construction of  kernels and cokernels as well as the reminder axioms of an abelian category can be checked fiberwise.   For instance, given a morphism $\alpha \in \Rep{G}$, then one can easily show that $\mathrm{Ker}(\alpha):= \cup_{x \in \gG_0}\mathrm{Ker}(\alpha_x)$ and  $\mathrm{Coker}(\alpha):= \cup_{x \in \gG_0}\mathrm{Coker}(\alpha_x)$ are the underlying sets, respectively, of  the kernel and cokernel of $\alpha$ in $\Rep{G}$. The rest of the statements  follows also from the category of $\Bbbk$-vector spaces. The tensor product and duals are given as follows:
\begin{equation}\label{Eq:otimes}
(\Ee,\varrho^{\Ee})\tensor{}(\Ff,\varrho^{\Ff}):=\,\, (\Ee\tensor{}\Ff,\varrho^{\Ee}\tensor{} \varrho^{\Ff})\,\,=\,\, \lr{\bigcup_{x \in \gG_0}E_x\tensor{\Bbbk}F_x, \{\varrho^{\Ee}_g\tensor{\Bbbk}\varrho^{\Ff}_g\}_{g \in \gG_1}}
\end{equation}
\begin{equation}\label{Eq:star}
(\Ee,\varrho^{\Ee})^*:=\,\, (\Ee^*, \varrho^{\Ee^*})\,\,=\,\, \lr{\bigcup_{x \in \gG_0}E_x{ }^*,\left\{\underset{}{}[(\varrho^{\Ee}_g)^*]^{-1}\right\}_{g \in \gG_1}  },
\end{equation}
where we denote by $X^*=\hom{\Bbbk}{X}{\Bbbk}$ the linear dual of a vector space $X$ and similar notation is used for linear maps. The identity object is the $1$-dimensional representation $(\Ii,\varrho^{\Ii})$.
\end{proof}

As in the case of groups or rings, morphisms between groupoids entail functors between their categories of representations. 

\begin{lemma}\label{lema:indfunct}
Let $\phi: \gG \to \hH$ be a morphism of groupoids. Then there is a $\Bbbk$-linear functor  called the restriction functor
\begin{equation}\label{Eq:ind}
\B{\Rr}(\phi): \Rep{H} \longrightarrow  \Rep{G}, 
\end{equation}
sending any representation $(\Pp,\varrho^{\Pp}) \in \Rep{H}$ to the representation $(\phi_0^*{\Pp},\phi_1^*\varrho^{\Pp})$, where $\phi_0^*{\Pp}\,=\, \bigcup_{x \in \gG_0}P_{\phi_0(x)}$ and $(\phi_1^*\varrho^{\Pp})_{g}\,=\, \varrho^{\Pp}_{\phi_1(g)}$, for every $x \in \gG_0$ and $g \in \gG_1$, and acting in a obvious way on morphisms. Moreover, the restriction  functor $\B{\Rr}(\phi)$ is  strong  monoidal.
\end{lemma}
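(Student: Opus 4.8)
The plan is to verify in turn that $\B{\Rr}(\phi)$ is well defined on objects, that it is a $\Bbbk$-linear functor, and finally that it transports the monoidal structure. The whole argument rests on the single fact that a morphism of groupoids $\phi=(\phi_0,\phi_1)$ commutes with the structure maps, i.e.\ $\Sf{s}\circ\phi_1=\phi_0\circ\Sf{s}$, $\Sf{t}\circ\phi_1=\phi_0\circ\Sf{t}$, $\phi_1\circ\iota=\iota\circ\phi_0$ and $\phi_1(gh)=\phi_1(g)\phi_1(h)$ for composable $g,h$.

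First I would check that $(\phi_0^*\Pp,\phi_1^*\varrho^{\Pp})$ really is a $\gG$-representation in the sense of Definition \ref{def:rep}. Condition \ref{item:1}) is immediate: the fibre of $\phi_0^*\Pp$ at $x\in\gG_0$ equals the fibre $P_{\phi_0(x)}$ of $\Pp$, which is finite dimensional and isomorphic to the ambient space $V$ attached to $\Pp$, so the same $V$ witnesses condition \ref{item:1}) for $\phi_0^*\Pp$. For condition \ref{item:2}), compatibility of $\phi$ with source and target shows that $\varrho^{\Pp}_{\phi_1(g)}$ is a linear isomorphism $P_{\phi_0(\Sf{s}(g))}\to P_{\phi_0(\Sf{t}(g))}$, which is exactly a map $E_{\Sf{s}(g)}\to E_{\Sf{t}(g)}$ for the pulled-back bundle. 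The cocycle identities \eqref{Eq:cocycle} for $\phi_1^*\varrho^{\Pp}$ then follow from those for $\varrho^{\Pp}$ together with $\phi_1\circ\iota=\iota\circ\phi_0$ and $\phi_1(gh)=\phi_1(g)\phi_1(h)$.

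Next I would treat morphisms and $\Bbbk$-linearity. A morphism $\beta=\{\beta_y\}_{y\in\hH_0}$ in $\Rep{H}$ is sent to the reindexed family $\{\beta_{\phi_0(x)}\}_{x\in\gG_0}$; the commuting square \eqref{Eq:diag} for this family at an arrow $g\in\gG_1$ is precisely the square for $\beta$ at the arrow $\phi_1(g)\in\hH_1$, and hence commutes. Preservation of identities and of composition is clear, since both operations are computed fibrewise, and $\Bbbk$-linearity of $\beta\mapsto\{\beta_{\phi_0(x)}\}$ is obvious because the vector-space structure on the hom-spaces is defined fibrewise.

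Finally, the monoidal part. Comparing \eqref{Eq:otimes} with the definition of $\B{\Rr}(\phi)$, the fibre of $\B{\Rr}(\phi)(\Pp\tensor{}\Qq)$ at $x$ is $P_{\phi_0(x)}\tensor{\Bbbk}Q_{\phi_0(x)}$ with action $\varrho^{\Pp}_{\phi_1(g)}\tensor{\Bbbk}\varrho^{\Qq}_{\phi_1(g)}$, which coincides on the nose with the fibre and the action of $\B{\Rr}(\phi)(\Pp)\tensor{}\B{\Rr}(\phi)(\Qq)$; likewise $\B{\Rr}(\phi)(\Ii)=\Ii$, since $\phi_0$ does not alter the constant fibres $\Bbbk$. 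Thus the structure isomorphisms may be taken to be identities, so $\B{\Rr}(\phi)$ is in fact strict, hence strong, monoidal, and the coherence and naturality conditions hold trivially. I do not expect a genuine obstacle here: the only point requiring care is the bookkeeping with source and target in the cocycle check, where one must use that $\phi$ is a functor rather than a mere map of underlying sets; everything else reduces, as in Lemma \ref{lema:rep}, to the corresponding fibrewise statement in $\Bbbk$-vector spaces.
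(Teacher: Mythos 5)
Your proposal is correct and is exactly the routine verification the paper has in mind: the paper's own proof of Lemma \ref{lema:indfunct} is simply the word ``Straightforward'', and your fibrewise checks of conditions \ref{item:1}) and \ref{item:2}), of functoriality, and of strict (hence strong) monoidality via \eqref{Eq:otimes} are the standard way to fill that in. No gaps; the only point of substance, as you note, is using that $\phi$ intertwines $\Sf{s}$, $\Sf{t}$, $\iota$ and composition in the cocycle check.
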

\begin{proof}
Straightforward. 
\end{proof}

\begin{example}\label{exam:actionI}
Let $\gG$ be an action groupoid as in Example \ref{exam:action} with 
a group $G$ and  a left  $G$-set $X$. Consider $G$ as a groupoid with one object. Then the projection $pr_1: G \times X \to G$ defines a morphism of groupoids. The associated restriction functor $\B{\Rr}(pr)$  coincides then with the functor $-\times X$, which sends any $G$-representation $(V, \varrho^{\scriptscriptstyle{V}})$ to the $\gG$-representation 
$V\times X =\cup_{x \,\in X} V\times \{x\}$ with $\gG$-action $\varrho^{\scriptscriptstyle{V\times X}}_{(g, x)}: V\times\{x\} \to V\times\{gx\}$ sending $(v, x) \mapsto (\varrho^{\scriptscriptstyle{V}}_g(v), gx)$, for every $g \in G$. Of course, we have $\B{\Rr}(pr)(f)=f \times X$, for any morphism $f :(V, \varrho^{\scriptscriptstyle{V}}) \to (W, \varrho^{\scriptscriptstyle{W}})$ of $G$-representations.
\end{example}

\begin{remark}\label{remak:Frobenius}
When both $\gG$ and $\hH$ are groups in Lemma \ref{lema:indfunct}, we have (see Example \ref{exam:action}) that the restriction functor  coincides with the usual restriction functor in group theory. Obviously  this functor can be extended to all representation (including the infinite dimensional ones). The search of a (left) adjoint functor to $\B{\Rr}(\phi)$, that is,  the construction of the \emph{induction functor}, is a classical subject in group theory which is strongly related to \emph{Frobenius reciprocity formula}. In our case we think that it is possible to construct  \emph{the induction functor for groupoids}, i.e.~ an adjoint functor to $\B{\Rr}(\phi)$ of Lemma \ref{lema:indfunct}.  Naturally,  this construction could be very interesting in the study of discrete groupoids, as it was for discrete groups, however,  we think that this deserves a separate project.
\end{remark}

\begin{remark}\label{rem:vb}
If we take a topological groupoid $\gG$. Then it is well know that the category   $\tRep{G}$ of representations described in Remark \ref{rem:rep},  is no longer abelian. Although, it is  symmetric rigid monoidal $\Bbbk$-linear category. In the discrete case the advantage was  that for any  $\gG$-representation $(\Ee,\varrho^{\Ee})$ the underlying set $\Ee$ can be identified with $\gG_0\times V$ for some finite dimentional $\Bbbk$-vector space $V$ (i.e., it is a trivial bundle over $\gG_0$) and there were no topological properties need to be checked. In the topological case we can show, using \cite[Theorem 6.3]{Karoubi:book} and diagrams \eqref{Eq:diag}, that the category $\tRep{G}$  is in fact a \emph{pseudo-abelian category}  in the sense of M. Karoubi \cite[Definition 6.7]{Karoubi:book}.

On  the other hand, the functor of Lemma \ref{lema:indfunct} can be also constructed for  topological groupoids. Here of course we are implicitly assuming that the structure maps $\phi_0,\phi_1$ are continuous, although,  one only need  to assume that $\phi_1$ is continuous. In this case, the construction  of that functor  uses in part  the induced vector bundles by  $\phi_0$. 
For instance, in the case of a topological action groupoid Example \ref{exam:action}, we have as in Example \ref{exam:actionI} a morphism of topological groupoids $pr_1: \gG_1=G \times X \to G$ which leads to the functor $\B{\Rr}^{\scriptscriptstyle{top}}(pr): \B{\Rr{\rm ep}}_{\Bbbk}^{\scriptscriptstyle{top}}(G) \to \tRep{G}$ form the category of continuous $G$-representations to the category of $G$-equivariant bundles. In this direction, assume further that $X \in \B{\Rr{\rm ep}}_{\Bbbk}^{\scriptscriptstyle{top}}(G)$,  we can address here a similar problem to the \emph{Equivariant Serre Problem} \cite{Masuda/Petrie:1995} (see also  Example \ref{exam:algGroups} below). Precisely,   we can ask whether the functor $\B{\Rr}^{\scriptscriptstyle{top}}(pr)$ is 'surjective'  and  naturally isomorphic to  $\B{\Rr}^{\scriptscriptstyle{top}}(pr) \,\cong\, -\times X$. That is, one can ask  whether a  given $G$-equivariant bundle is trivial, i.e., isomorphic to $V\times X$ for some continuous $G$-representation $V$.
\end{remark}

The following notations will be used in the sequel.  Let $\gG$ be a groupoid, we denote by $\algb{\gG}$  and $\algt{\gG}$ the commutative $\Bbbk$-algebras of all maps, respectively,  from  $\gG_0$  and $\gG_1$ to $\Bbbk$. The algebra $\algb{\gG}$ is refereed to as \emph{the base algebra of $\gG$} and $\algt{\gG}$ \emph{the total algebra of $\gG$}. We consider $\algt{\gG}$ as an $(\algb{\gG}\tensor{\Bbbk}\algb{\gG})$-algebra via the algebra map 
$$
\Sf{t}^* \tensor{\Bbbk}\Sf{s}^*: \algb{\gG}\tensor{\Bbbk}\algb{\gG} \longrightarrow \algt{\gG},
$$ where  $\Sf{s}^*:=\mathrm{M}_{\Bbbk}(\Sf{s})$ and $\Sf{t}^*:=\mathrm{M}_{\Bbbk}(\Sf{t})$ are the corresponding $\Bbbk$-algebra maps of $\Sf{s}$ and $\Sf{t}$. 
We denote by $\proj{\gG}$ the category of finitely generated and projective $\algb{\gG}$-modules.

Take a $\gG$-representation $(\Ee,\varrho^{\Ee})$ and let $\pi_{\Ee}: \Ee \to \gG_0$ be the canonical surjection. We consider the $\algb{\gG}$-modules of ('global') sections. Precisely, we set
\begin{equation}\label{Eq:Gamma}
\Gama{\Ee}\,=\, \left\{ s: \gG_0 \to \Ee|\, \pi_{\Ee} \circ s \,=\, id_{\gG_0} \right\}.
\end{equation}
This is a $\Bbbk$-vector space endowed with structure of $\algb{\gG}$-modules explicitly given by:
$$
(s+s')(x)\,=\, s(x) + s'(x),\; (a.s)(x)\,=\, a(x) s(x), \text{ for every } x \in \gG_0, a \in \algb{\gG}, \text{ and }s,s' \in \Gama{\Ee}.
$$ 

In fact $\Gama{\Ee}$ is a finitely generated and projective $\algb{\gG}$-module. Namely, this can be checked either by thinking of $(\Ee, \pi_{\Ee})$ as a trivial bundle or by directly computing a dual basis. A precise  way of computing a dual basis is given as follows. 
Consider the $n$-dimensional underlying  vector space $V$  (the 'type fiber') together with the linear isomorphisms $\varphi_x: V \to E_x$. Fix, $\{v_1, \cdots, v_n\}$ a basis for $V$, and consider, for any $i=1,\cdots,n$ the following section $s_i:\gG_0 \to \Ee$ sending $x \mapsto \varphi_x(v_i)$. Since $\{\varphi_x(v_i)\}_{1,\cdots,n}$ is a basis for any of the $E_x$'s, so  for any section $s \in \Gama{\Ee}$, we can  write $s(x)\,=\, \sum_{i} a_i(x) s_i(x)$, for some $a_i(x) \in \Bbbk$ which depends only on $s(x)$\footnote{Notice here that $a_i(x)=v_i^* \circ \varphi_x^*$, where the $v_i^*$'s are the dual maps  of the basis $\{v_1,\cdots,v_n\}$.}. Thus, the maps $a_i$'s define an $\algb{\gG}$-linear maps $s_i^*: \Gama{\Ee} \to \algb{\gG}$ by sending $s \mapsto a_i$. Now, one can easily check that $\{s_i, s_i^*\}_i$ form a dual basis for the $\algb{\gG}$-module  $\Gama{\Ee}$.

Evidently,  after forgetting the $\gG$-action, $\B{\Gamma}$ establishes  a $\Bbbk$-linear functor from the category of representations  $\Rep{G}$ to the category of modules $\proj{\gG}$. Specifically, we have  the following well known result (see the last observation in  \cite[Example 1.24]{Deligne/Milne}).

\begin{proposition}\label{prop:tannaka}
Let $\gG$ be a groupoid. Then the functor 
$$
\xymatrix@R=0pt{\omega_{\gG}:  \Rep{G} \ar@{->}[rr] & &  \proj{\gG}\\  (\Ee,\varrho^{\Ee}) \ar@{->}[rr] & & \Gama{\Ee} }
$$
is a non trivial $\Bbbk$-linear monoidal right exact functor. In particular $\omega_{\gG}$ is a fiber functor and the pair $(\Rep{G},\omega_{\gG})$ is then a Tannakian $\Bbbk$-linear category.
\end{proposition}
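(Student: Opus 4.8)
The plan is to verify directly that $\omega_{\gG}$ satisfies each requirement in the definition of a fiber functor recalled in the Appendix—that it is $\Bbbk$-linear, faithful, (right) exact, and symmetric monoidal with values in $\proj{\gG}$—and then to combine this with Lemma \ref{lema:rep} to conclude that $(\Rep{G},\omega_{\gG})$ is Tannakian. That $\omega_{\gG}$ is well defined, i.e.\ that $\Gama{\Ee}$ is a finitely generated projective $\algb{\gG}$-module, was already settled above by means of the trivializing sections $s_1,\dots,s_n$; in fact those sections exhibit $\Gama{\Ee}$ as a \emph{free} $\algb{\gG}$-module of rank $n=\dim_{\Bbbk}V$, since $\{s_i(x)\}_i$ is a basis of $E_x$ for every $x\in\gG_0$, so each section decomposes uniquely as $s=\sum_i a_is_i$ with $a_i\in\algb{\gG}$ (free modules being in particular objects of $\proj{\gG}$). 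Throughout I would exploit the identification $\Gama{\Ee}\cong\prod_{x\in\gG_0}E_x$ together with the formula $\Gama{\alpha}(s)(x)=\alpha_x(s(x))$ describing the action of $\omega_{\gG}$ on a morphism $\alpha=\{\alpha_x\}_x$, from which $\Bbbk$-linearity of $\omega_{\gG}$ is immediate.

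For exactness and faithfulness I would argue fiberwise. By the proof of Lemma \ref{lema:rep}, kernels and cokernels in $\Rep{G}$ are computed fiber by fiber, so a short exact sequence of representations restricts to a short exact sequence of $\Bbbk$-vector spaces over each $x\in\gG_0$; since the product functor $\prod_{x\in\gG_0}$ is exact on $\Bbbk$-modules, applying $\Gama{-}$ preserves exactness, and in particular $\omega_{\gG}$ is right exact (indeed exact). Faithfulness follows because a nonzero morphism $\alpha$ has $\alpha_{x_0}\neq 0$ for some $x_0\in\gG_0$; choosing a section $s$ with $s(x_0)$ outside $\ker\alpha_{x_0}$ yields $\Gama{\alpha}(s)(x_0)=\alpha_{x_0}(s(x_0))\neq 0$, whence $\Gama{\alpha}\neq 0$.

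The monoidal structure is the substantive point. I would introduce, for representations $(\Ee,\varrho^{\Ee})$ and $(\Ff,\varrho^{\Ff})$, the natural comparison morphism
\[
\zeta_{\Ee,\Ff}\colon \Gama{\Ee}\tensor{\algb{\gG}}\Gama{\Ff}\longrightarrow \Gama{\Ee\tensor{}\Ff},\qquad s\tensor{\algb{\gG}}t\longmapsto \big(x\mapsto s(x)\tensor{\Bbbk}t(x)\big),
\]
together with the unit comparison $\algb{\gG}=\mathrm{Maps}(\gG_0,\Bbbk)=\Gama{\Ii}$. One checks that $\zeta_{\Ee,\Ff}$ is well defined (it is $\algb{\gG}$-balanced, as scalars $a(x)\in\Bbbk$ pass freely across the fiberwise tensor) and natural in both variables. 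That it is an isomorphism I would read off from the trivializations: the elements $s_i\tensor{\algb{\gG}}t_j$ form an $\algb{\gG}$-basis of the source, and their images $x\mapsto s_i(x)\tensor{\Bbbk}t_j(x)$ constitute the basis of $\Gama{\Ee\tensor{}\Ff}$ attached to the basis $\{v_i\tensor{\Bbbk}w_j\}$ of the type fiber $V\tensor{\Bbbk}W$. The associativity, unit and symmetry compatibilities then reduce, fiber by fiber, to the corresponding constraints in $\Bbbk$-vector spaces, so $\omega_{\gG}$ is symmetric monoidal. Non-triviality is clear, since $\omega_{\gG}(\Ii)=\algb{\gG}\neq 0$ because $\gG_0\neq\emptyset$.

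Assembling these facts, $\omega_{\gG}$ is a non-trivial, faithful, exact, $\Bbbk$-linear symmetric monoidal functor from the abelian rigid symmetric monoidal $\Bbbk$-linear category $\Rep{G}$ (Lemma \ref{lema:rep}) into $\proj{\gG}$; this is exactly a fiber functor over the commutative $\Bbbk$-algebra $\algb{\gG}$ in the sense of the Appendix, so $(\Rep{G},\omega_{\gG})$ is a Tannakian $\Bbbk$-linear category. I expect the only genuine obstacle to be the verification of monoidality—producing $\zeta$ and proving it bijective—but the global trivialization of each representation reduces this to a basis-level computation.
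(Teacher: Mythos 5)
Your proof is correct and follows essentially the same route as the paper: the same comparison map $\Gama{\Ee}\tensor{\algb{\gG}}\Gama{\Ff}\to\Gama{\Ee\tensor{}\Ff}$ shown bijective by means of the global trivializing sections $\{s_i\}$, and a fiberwise treatment of exactness. The only (harmless) differences are that you observe $\Gama{\Ee}$ is in fact free and deduce bijectivity by a basis-to-basis argument, and that you prove full exactness and faithfulness explicitly, whereas the paper establishes right exactness by lifting sections of an epimorphism fiber by fiber and treats faithfulness as known.
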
 
\begin{proof}
We refer to the Appendix for the definitions and terminology employed in the statement.  
Notice that in our case there are various ways to deduce that $\omega_{\gG}:=\omega$ is a  monoidal functor. Here we give an elementary proof.  First we clearly have that $\omega(\Ii) \cong \algb{\gG}$. Now,  take two $\gG$-representations $(\Ee,\varrho^{\Ee})$ and $(\Ff,\varrho^{\Ff})$, and consider the following $\Bbbk$-linear map
$$
\psi: \Gamma(\Ee)\tensor{\Bbbk}\Gamma(\Ff) \longrightarrow \Gamma(\Ee\tensor{}\Ff), \lr{ p\tensor{\Bbbk}q \longmapsto \LR{x \mapsto p(x)\tensor{\Bbbk}q(x)} }.
$$

This map can be clearly extended to an $\algb{\gG}$-linear map  on  the tensor product  $\Gamma(\Ee)\tensor{\algb{\gG}}\Gamma(\Ff)$. We denote also by $\psi$ this extension. It is easily checked that $\psi$ is injective. On the other hand, take an element $s \in \Gamma(\Ee\tensor{}\Ff)$, so for every $x \in \gG_0$, we can write $s(x)\,=\, \sum_{i,j}a_{i,j}(x)s_i(x)\tensor{\Bbbk}r_j(x)$,  where $\{s_i,s_i^*\}$ and $\{r_j,r_j^*\}$, are respectively, the ('global') dual basis of $\Gamma(\Ee)$ and $\Gamma(\Ff)$ constructed as above. We clearly have that $\psi\big(\sum_{i,j}s_i\tensor{\algb{\gG}}a_{i,j}r_j\big)=s$ and then $\psi$ is surjective. This establishes a natural isomorphism $\omega(-)\tensor{\algb{\gG}}\omega(-) \cong \omega(-\tensor{}-)$  obviously satisfying the necessary coherence conditions which convert $\omega$ into a monoidal functor. 

We need to check that $\omega$ is right exact. So take an epimorphism $\alpha: (\Ee, \varrho^{\Ee}) \to (\Ff,\varrho^{\Ff}) \to 0$ in $\Rep{G}$. Then $\omega_\alpha: \Gama{\Ee} \to \Gama{\Ff}$ is defined by sending any section $\Gama{\Ee} \ni s \mapsto \omega_\alpha(s) \in \Gama{\Ff}$ to the section defined by $\omega_\alpha(s)(x)=\alpha_x(s(x))$, for every $x \in \gG_0$. Given a section $r \in \Gama{\Ff}$, we know that, for every $x \in \gG_0$, there are elements $e^{(x)} \in E_x$ such that $\alpha_x(e^{(x)})=r(x)$. Fix an element $x \in \gG_0$ and choose only  one element say $e_x$ of those $e^{(x)} \in E_x$. In this way, we have define a section  $s: \gG_0 \to \Ee$ which sends $x \mapsto e_x$ and satisfies $\alpha_x(s(x))=\alpha_x(e_x)=r(x)$. That is, $\omega_\alpha(s)=r$, and so $\omega(\alpha)$ is an epimorphism\footnote{This step can be directly deduced  by applying Urysohn's lemma to  $\gG_0$ considering it as a  normal space with discrete topology.}.
The particular statement follows by Lemma \ref{lema:rep}, using  the fact that $\Bbbk\cong \End{\Rep{G}}{\varrho^{\Ii}}$ and by knowing that $\omega$ is a faithful functor.
\end{proof}

\begin{remark}\label{rem:NotTanaka}
For a topological groupoid $\gG$ with compact Hausdorff base space $\gG_0$ a variant of Proposition \ref{prop:tannaka} could be formulated. Explicitly, take the category $\tRep{G}$ of $\gG$-representations as was defined in Remark \ref{rem:rep} with $\Bbbk$ denotes the field of real or complex numbers.  As it was mentioned before,  this category is  a $\Bbbk$-linear pseudo-abelian symmetric rigid monoidal category with ${\rm End}(\Ii) \cong \Bbbk$.  A functor $\omega^{top}: \tRep{G} \to \rmod{\calgb{\gG}}$ can be naturally defined here by taking continuous global sections and the ring of continuous functions  $\calgb{\gG}$ as the base ring. Hence we can show thanks to \cite{Swan:1962,Serre:1958} that its image lands in  the sub-category $\cproj{\gG}$ of finitely generated and projective $\calgb{\gG}$-modules.
To show that $\omega^{top}$ is a monoidal functor,  one perhaps  could uses similar arguments  as in  the proof of Proposition \ref{prop:tannaka}, since we know  in this case that a 'global' dual basis always exits; in the sense that for any vector bundle $\Ee$ over $\gG_0$ (of locally constant rank)  there are continuous sections $s_1, \cdots, s_n \in \Gama{\Ee}$ such that for any $x \in\gG_0$ there is a neighborhood $U$ of $x$ where the set  $\{s_1(y), \cdots, s_n(y)\}$ is a $\Bbbk$-basis of $E_y$, for any $y \in U$.

On the other hand, since in general we know that the fiber functor on Tannakian $\Bbbk$-linear categories is in fact an exact functor, so it is left exact. We could then  expect in the topological case that the functor $\omega^{top}$ preserves the kernels of projectors\footnote{That are  endomorphisms $f$ such that $f^2=f$~.}. As conclusion, a topological version of Proposition \ref{prop:tannaka} could be formulated by saying that   $(\tRep{G}, \omega^{top})$ is a \emph{pseudo-Tannakian $\Bbbk$-linear category}.
\end{remark}
Recall that a $\Bbbk$-linear category is said to be  \emph{locally of finite type over $\Bbbk$}, if any object is of finite length and the $\Bbbk$-vector space of morphisms between arbitrary two objects is finite dimensional.  
\begin{corollary}\label{coro:1}
Let $\gG$ be a groupoid and consider $\Rep{G}$ its Tannakian $\Bbbk$-linear category of representations. Then  $\Rep{G}$ is locally of finite type over $\Bbbk$ and the canonical map $$\xymatrix@R=0pt{\hom{\Rep{G}}{\varrho^{\Ee}}{\varrho^{\Ff}}\tensor{\Bbbk}\algb{\gG} \ar@{->}[rr] &&  \hom{\algb{\gG}}{\Gama{\Ee}}{\Gama{\Ff}}  \\  \alpha\tensor{\Bbbk}a \ar@{|->}[rr]&& \left[\underset{}{}  s \longmapsto[ x \mapsto \alpha_x(s(x))a(x) ] \right] }$$ is injective.
\end{corollary}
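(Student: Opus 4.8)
The plan is to derive both assertions from a single structural fact: a morphism of $\gG$-representations is determined by its component at any one fixed object, so that restriction to a fibre embeds the Hom-spaces into finite-dimensional spaces of $\Bbbk$-linear maps. Fix $x_0 \in \gG_0$ and use that $\gG$ is transitive, so every $y \in \gG_0$ is joined to $x_0$ by an arrow $g\colon x_0 \to y$. For $\beta=\{\beta_x\}_x \in \hom{\Rep{G}}{\varrho^{\Ee}}{\varrho^{\Ff}}$, commutativity of \eqref{Eq:diag} gives $\beta_y = \varrho^{\Ff}_g \circ \beta_{x_0}\circ (\varrho^{\Ee}_g)^{-1}$; hence the restriction map $\beta \mapsto \beta_{x_0}$ is injective, and in fact identifies $\hom{\Rep{G}}{\varrho^{\Ee}}{\varrho^{\Ff}}$ with the space of maps $E_{x_0}\to F_{x_0}$ that are equivariant for the isotropy group at $x_0$. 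This one observation drives everything.

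First I would establish that $\Rep{G}$ is locally of finite type. Since $\beta \mapsto \beta_{x_0}$ embeds $\hom{\Rep{G}}{\varrho^{\Ee}}{\varrho^{\Ff}}$ into $\hom{\Bbbk}{E_{x_0}}{F_{x_0}}$, which is finite-dimensional by condition \ref{item:1}) of Definition \ref{def:rep}, the morphism spaces are finite-dimensional. For finite length I would note that a subrepresentation of $(\Ee,\varrho^{\Ee})$ is a choice of subspaces $S_x \subseteq E_x$ stable under the $\varrho^{\Ee}_g$, and transitivity again forces such a subobject to be determined by the single isotropy-invariant subspace $S_{x_0}\subseteq E_{x_0}$. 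Any strictly increasing chain of subobjects thus restricts to a strictly increasing chain of subspaces of $E_{x_0}$, whose length is bounded by $\dim_\Bbbk E_{x_0}$; hence every object has finite length. (Equivalently one may invoke the monoidal equivalence between $\Rep{G}$ and the category of finite-dimensional representations of the isotropy group at $x_0$, for which both properties are classical.)

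Next I would prove injectivity of the canonical map $\Theta$. The target $\hom{\algb{\gG}}{\Gama{\Ee}}{\Gama{\Ff}}$ is computed fibrewise: using the dual basis $\{s_i,s_i^*\}$ constructed before Proposition \ref{prop:tannaka}, every $\algb{\gG}$-linear map $T$ acts by $T(s)(x)=T_x(s(x))$ for uniquely determined $\Bbbk$-linear maps $T_x\colon E_x\to F_x$, and $T=0$ iff all $T_x=0$. Now take $\xi=\sum_k \alpha^{(k)}\tensor{\Bbbk}a_k$ with $\Theta(\xi)=0$, where I may assume the $\alpha^{(k)}$ to be $\Bbbk$-linearly independent in $\hom{\Rep{G}}{\varrho^{\Ee}}{\varrho^{\Ff}}$. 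For each $x$ and each $v\in E_x$, choosing a section $s\in\Gama{\Ee}$ with $s(x)=v$ yields $0=\Theta(\xi)(s)(x)=\sum_k a_k(x)\,\alpha^{(k)}_x(v)$, so $\sum_k a_k(x)\,\alpha^{(k)}_x=0$ in $\hom{\Bbbk}{E_x}{F_x}$ for every $x$. By the injectivity of $\beta\mapsto\beta_x$ established above, the independent family $\{\alpha^{(k)}\}$ restricts to an independent family $\{\alpha^{(k)}_x\}$ in $\hom{\Bbbk}{E_x}{F_x}$; therefore $a_k(x)=0$ for all $k$. As $x$ is arbitrary, each $a_k=0$ and hence $\xi=0$.

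The main obstacle, and the place where transitivity is indispensable, is precisely the implication ``$\sum_k a_k(x)\,\alpha^{(k)}_x=0$ for all $x$ $\Rightarrow$ $a_k=0$'': globally independent morphisms need not restrict to independent maps on a single fibre, and independence on each fibre can only be guaranteed because the cocycle condition \eqref{Eq:cocycle} propagates a morphism from one fibre to all the others, making $\beta\mapsto\beta_x$ injective. The same mechanism underlies the finiteness in the first part. Both statements genuinely use the transitive structure of $\gG$ and fail without it (for instance, the trivial groupoid on an infinite set has infinite-dimensional endomorphism space of the identity object, and $\Theta$ is no longer injective), so the argument must invoke transitivity throughout rather than only the abelian, fibrewise formalism of Lemma \ref{lema:rep}.
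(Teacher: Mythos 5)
Your route is genuinely different from the paper's. The paper disposes of this corollary in one line, as a direct consequence of Proposition \ref{prop:tannaka} together with Brugui\`eres' Proposition 2.5: once $(\Rep{G},\omega_{\gG})$ is known to be a Tannakian $\Bbbk$-linear category with fiber functor over $\algb{\gG}$ and $\Alg{\algb{\gG}}{\Bbbk}\neq\emptyset$, local finiteness and injectivity of the canonical map are general facts about such categories. Your proof replaces that citation by a concrete argument: restriction of a morphism to one fibre is injective because an arrow $g\colon x_0\to y$ conjugates $\beta_{x_0}$ into $\beta_y$, and then finite-dimensionality of Hom-spaces, finite length, and the fibrewise linear-independence argument for injectivity of $\Theta$ all follow. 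Each of these steps is correct as written, and what you gain is an elementary, self-contained proof that also identifies $\hom{\Rep{G}}{\varrho^{\Ee}}{\varrho^{\Ff}}$ with the isotropy-equivariant maps $E_{x_0}\to F_{x_0}$; what the paper's route gains is uniformity (the same citation covers every Tannakian category, not just this fiber functor).

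There is, however, a mismatch of hypotheses that you must confront rather than relegate to a closing remark: every step of your argument uses that $\gG$ is transitive, i.e.\ that $(\Sf{s},\Sf{t})\colon\gG_1\to\gG_0\times\gG_0$ is surjective, while the corollary is stated for an arbitrary groupoid. You are right that transitivity cannot be dispensed with: if $\gG$ has two connected components with indicator functions $e_1,e_2\in\algb{\gG}$, then $e_1$ is a nonzero endomorphism of $\varrho^{\Ii}$ and $e_1\tensor{\Bbbk}e_2$ is a nonzero element of the kernel of the canonical map $\End{\Rep{G}}{\varrho^{\Ii}}\tensor{\Bbbk}\algb{\gG}\to\hom{\algb{\gG}}{\algb{\gG}}{\algb{\gG}}$; and for the unit groupoid $\Uu(X)$ of Example \ref{exmp:UV} with $X$ infinite the endomorphism algebra of $\varrho^{\Ii}$ is all of $\algb{\gG}$, which is infinite dimensional. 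So, as a proof of the corollary as literally stated, your argument has a gap --- but it is a gap inherited from the paper itself, since the proof of Proposition \ref{prop:tannaka} already asserts $\Bbbk\cong\End{\Rep{G}}{\varrho^{\Ii}}$, an identity that likewise holds only when $\gG$ is connected. The correct conclusion is that both statements require $\gG$ transitive (or must be read one connected component at a time), and your argument proves exactly that corrected version; you should state the transitivity hypothesis up front instead of invoking it silently in the second sentence.
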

\begin{proof}
This is a direct consequence of Proposition \ref{prop:tannaka} and \cite[Proposition 2.5]{Bruguieres:1994}.
\end{proof}

\section{The Hopf algebroid of representative functions.}\label{sec:2}
In this section we introduce the Hopf algebroid of representative functions on a given discrete groupoid. In the discrete group case, we recover the construction of the commutative Hopf algebra of representative functions \cite{Abe:book, Hochschild:book}, see  also \cite{Chevalley:book, Hochschild/Mostow:1957, Segal:1968a} for  topological groups, Lie groups and algebraic linear groups with infinite base field.  
\smallskip

By  groupoid we  mean a  discrete groupoid, except where  otherwise  it is explicitly specified.  The notation is that of Section \ref{sec:1}. We fix a groupoid $\gG: \grpd{\gG}$ and denote by $B:=\algb{\gG}$ its base $\Bbbk$-algebra. As was  mentioned before, we consider the total $\Bbbk$-algebra  $\algt{\gG}$ as an $(B\tensor{\Bbbk}B)$-algebra via the duals of both $\Sf{s}$ and $\Sf{t}$, i.e.~ $\mathrm{M}_{\Bbbk}(\Sf{s})$ and $\mathrm{M}_{\Bbbk}(\Sf{t})$.

By using Proposition \ref{prop:tannaka}, we can  apply 
the procedure described in Appendix to construct the universal object $\Repf{G}:=\lL_{\Bbbk}(\omega_{\gG})$, see 
\cite{Deligne:1990} or precisely \cite[Théorème 5.2]{Bruguieres:1994},   see also \cite[\S 4]{El Kaoutit/Gomez:2004}.

Let us first  fix some new notations.  For a  $\gG$-representation $(\Ee,\varrho^{\Ee})$ we  denote $\omega(\Ee,\varrho^{\Ee}):=\Gama{\Ee}$ its image by the fiber functor of Proposition \ref{prop:tannaka}. We will often use the isomorphisms $ \Gama{\Ee^*} \cong  \Gama{\Ee}^*=\hom{B}{\Gama{\Ee}}{B}$. By applying the description offered in equation \eqref{Eq:Lomega} (see Appendix) to the fiber functor $\omega_{\gG}: \Rep{G} \to \proj{\gG}$, we obtain

\begin{corodefinition} \label{coro:comatrix}
Let $\gG$ a groupoid and $B=\algb{\gG}$ its base  $\Bbbk$-algebra. Then the universal $B$-bimodule $\Repf{G}$ is the commutative $(B\tensor{\Bbbk}B)$-algebra 
\begin{equation}\label{Eq:RG}
\Repf{G} \,\,\cong\,\, \int^{\varrho^{\Ee}\,\in\, \Rep{G}} \Gama{\Ee}^*\tensor{\Bbbk}\Gama{\Ee}\, \,\, \cong\,\, \frac{\bigoplus_{\varrho^{\Ee}\, \in \,\Rep{G}} \Gama{\Ee^*}\tensor{\tend{\Ee}}\Gama{\Ee}}{\jJ_{\Rep{G}}} \,\, \cong \,\, \B{\Sigma}^{\dag}\tensor{\Bb} \B{\Sigma},
\end{equation}
see the Appendix for the notation.
The  multiplication of $\Repf{G}$ is defined using the tensor product in $\Rep{G}$ and the unit is the image of $B\tensor{\Bbbk}B$ by taking the identity representation $(\Ii, \varrho^{\Ii})$.

The algebra $\Repf{G}$ is called \emph{the algebra of representative functions} on the groupoid $\gG$.  The terminology will be soon justified.
We will use the notation $\bara{\varphi\tensor{\tend{\Ee}}p}$ to denote elements in $\Repf{G}$ (here we are  taking the second description in equation \eqref{Eq:RG}). Such an element corresponds then to the $\gG$-representation $\varrho^{\Ee}$ and it is represented by the generic element $\varphi\tensor{\tend{\Ee}}p \in   \Gama{\Ee}^*\tensor{\tend{\Ee}}\Gama{\Ee}$.
\end{corodefinition}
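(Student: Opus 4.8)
The plan is to obtain all three presentations of $\Repf{G}$ by specialising the general reconstruction formula \eqref{Eq:Lomega} of the Appendix to the fiber functor $\omega_{\gG}\colon \Rep{G}\to \proj{\gG}$ of Proposition \ref{prop:tannaka}. That proposition already guarantees that $(\Rep{G},\omega_{\gG})$ is a Tannakian $\Bbbk$-linear category whose fiber functor takes values in finitely generated projective $B$-modules, so the hypotheses needed to form the universal object $\lL_{\Bbbk}(\omega_{\gG})$ are in force; the remaining work is to transcribe the abstract output in terms of the concrete fibers $\omega(\Ee)=\Gama{\Ee}$ and to verify that the three expressions in \eqref{Eq:RG} agree.

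First I would substitute $\omega(\Ee)=\Gama{\Ee}$ and $\omega(\Ee)^{*}=\Gama{\Ee}^{*}=\hom{B}{\Gama{\Ee}}{B}$ into \eqref{Eq:Lomega}, reading off the coend $\Repf{G}\cong\int^{\varrho^{\Ee}}\Gama{\Ee}^{*}\tensor{\Bbbk}\Gama{\Ee}$. The $(B\tensor{\Bbbk}B)$-module structure is the evident one: the left $B$-action comes from the target $B$ of $\Gama{\Ee}^{*}$ and the right $B$-action from $\Gama{\Ee}$.

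To reach the middle presentation I would unwind the coend as a coequalizer, namely the quotient of $\bigoplus_{\Ee}\Gama{\Ee}^{*}\tensor{\Bbbk}\Gama{\Ee}$ by the dinaturality relations $\varphi\circ\Gama{\alpha}\tensor{\Bbbk}p\sim\varphi\tensor{\Bbbk}\Gama{\alpha}(p)$ indexed by the morphisms $\alpha$ of $\Rep{G}$. I would then sort these relations into two layers. The relations attached to the endomorphisms $\alpha\in\tend{\Ee}=\End{\Rep{G}}{\varrho^{\Ee}}$ are exactly those balancing each summand over its endomorphism ring, hence turn $\Gama{\Ee}^{*}\tensor{\Bbbk}\Gama{\Ee}$ into $\Gama{\Ee}^{*}\tensor{\tend{\Ee}}\Gama{\Ee}$; the relations attached to morphisms between distinct objects are collected into the submodule $\jJ_{\Rep{G}}$. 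Rewriting $\Gama{\Ee}^{*}$ as $\Gama{\Ee^{*}}$ through the rigidity isomorphism recalled just before the statement (coming from Lemma \ref{lema:rep}) then produces the second expression in \eqref{Eq:RG}.

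The third form $\B{\Sigma}^{\dag}\tensor{\Bb}\B{\Sigma}$ is the comatrix description of the same universal object supplied by the Appendix, where $\B{\Sigma}$ is the total bimodule assembled from the $\Gama{\Ee}$ over the associated ring $\Bb$, so I would invoke that identity directly. It remains only to record the algebra structure: the multiplication is induced by the monoidal constraint $\Gama{\Ee}\tensor{B}\Gama{\Ff}\cong\Gama{\Ee\tensor{}\Ff}$ established inside Proposition \ref{prop:tannaka}, while the unit is the image of $B\tensor{\Bbbk}B$ through the summand indexed by $(\Ii,\varrho^{\Ii})$, using $\omega(\Ii)\cong B$. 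The only points demanding genuine care are the bookkeeping in the second step, where one must check that the endomorphism relations collapse \emph{precisely} to the balanced tensor $\tensor{\tend{\Ee}}$ with nothing extra being absorbed into $\jJ_{\Rep{G}}$, and the set-theoretic caveat that the direct sum ranges over a small skeleton of $\Rep{G}$, which is legitimate because this category is locally of finite type over $\Bbbk$ by Corollary \ref{coro:1}.
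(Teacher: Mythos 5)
Your proposal is correct and follows essentially the same route as the paper: the paper likewise obtains all three presentations by applying the general description \eqref{Eq:Lomega} of the universal object $\lL_{\Bbbk}(\omega)$ from the Appendix to the fiber functor $\omega_{\gG}$ of Proposition \ref{prop:tannaka}, with the algebra structure coming from the monoidal constraint $\Gama{\Ee}\tensor{B}\Gama{\Ff}\cong\Gama{\Ee\tensor{}\Ff}$ and the unit from $(\Ii,\varrho^{\Ii})$. Your extra bookkeeping (separating the endomorphism relations that produce the balanced tensor $\tensor{\tend{\Ee}}$ from the remaining dinaturality relations generating $\jJ_{\Rep{G}}$, and the rigidity identification $\Gama{\Ee}^{*}\cong\Gama{\Ee^{*}}$) is exactly what the paper leaves implicit.
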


It is convenient to give explicitly the Hopf algebroid structure of $(B,\Repf{G})$.  
The multiplication and the unit are defined as was mentioned above, using the tensor product of $\gG$-representations and the unit uses the identity representation $\varrho^{\Ii}$. 

Precisely,  given two $\gG$-representations $\varrho^{\Ee}, \varrho^{\Ff}$ and two elements of the form $\bara{\varphi \tensor{\tend{\Ee}}p}$, $\bara{\psi \tensor{\tend{\Ff}}q}$  in $\Repf{G}$ with $\varphi \in \Gama{\Ee}^*$, $p \in \Gama{\Ee}$ and $\psi \in \Gama{\Ff}^*$, $q \in \Gama{\Ff}$, we have 
$$
\bara{\varphi \tensor{\tend{\Ee}}p}\, . \, \bara{\psi \tensor{\tend{\Ff}}q} \,\, =\,\, \bara{(\varphi\tensor{B}\psi) \tensor{\mathrm{T}_{\Ee\tensor{}\Ff}}(p\tensor{B}q)},
$$ where we have  identified $\Gama{\Ee^*} \tensor{B}\Gama{\Ff^*}$ with $\Gama{(\Ee\tensor{}\Ff)^*}$ via the canonical isomorphism which sends $\varphi\tensor{B}\psi$ to the section $\gG_0 \ni x \mapsto \varphi(x)\tensor{\Bbbk}\psi(x)  \in E_x^*\tensor{\Bbbk}F_x^*$. Of course this operation is  independent form the chosen representing elements in the equivalence classes of the second description in equation \eqref{Eq:RG}.

The reminder of structure maps  are described as follows:
\begin{itemize}
\item \emph{Source and target}: $\Sf{s}: B \to \Repf{G},  \lr{ a \mapsto  \bara{1_B\tensor{\tend{\Ii}}a} }, \, \Sf{t}:B \to \Repf{G},  \lr{ a \mapsto \bara{a\tensor{\tend{\Ii}}1_B} }$.
\item \emph{Counit}: $\varepsilon: \Repf{G} \to B, \,\lr{ \varepsilon(\bara{\varphi\tensor{\tend{\Ee}}p}) : \gG_0 \to \Bbbk, \Lr{x \mapsto \varphi(x)(p(x)) } }$
\item \emph{Comultiplication}:
$\Delta: \Repf{G} \to \Repf{G}\tensor{B}\Repf{G}, \,\lr{\bara{\varphi\tensor{\tend{\Ee}}p} \mapsto \sum_i^n \bara{\varphi\tensor{\tend{\Ee}}s_i}\tensor{B} \bara{s_i^*\tensor{\tend{\Ee}}p}}$
where $\{s_i,s_i^*\}_i^n$ is the 'global' dual basis of $\Gama{\Ee}_B$ and $n$ is the rank of the bundle $\Ee$.
\item \emph{Antipode}: $\sS: \Repf{G} \to \Repf{G},\; \lr{\bara{\varphi\tensor{\tend{\Ee}}p} \mapsto \bara{\td{p}\tensor{\mathrm{T}_{\Ee^*}} {\varphi}}}$,  where we have used the isomorphism $ \td{(-)}: \Ee \, \cong \, (\Ee^*)^*$.
\end{itemize}

The following proposition justifies the terminology used for $\Repf{G}$.

\begin{proposition}\label{prop:zeta}
Let $\gG$ be a groupoid and $B=\algb{\gG}$ its base $\Bbbk$-algebra.  The following  map
\begin{equation}\label{Eq:zeta}
\xymatrix@R0pt{  \Repf{G} \ar@{->}^-{\zeta}[rr]  & & \algt{\gG} \\ \bara{\varphi\tensor{T_{\Ee}}p } \ar@{|->}[rr] & & \LR{g \longmapsto \varphi(\Sf{t}(g))\lr{
\varrho_g^{\Ee}\big(p(\Sf{s}(g))\big) }   }     }
\end{equation}
is an injective  $(B\tensor{\Bbbk}B)$-algebras map to the total algebra of $\gG$. Moreover, we have 
\begin{enumerate}[(1)]
\item $ \iota^* \circ \zeta\,\,=\,\, \varepsilon$. \label{221}
\item $ \zeta \circ \sS (\bara{\varphi\tensor{\tend{\Ee}}p }) (g) \,\,=\,\,  \zeta(\bara{\varphi\tensor{\tend{\Ee}}p })(g^{-1})$, for every   $\bara{\varphi\tensor{\tend{\Ee}}p } \in \Repf{G}$  and $g \in \gG_1$. \label{222}
\item For every pair of arrows $f, g \in \gG_1$ with $\Sf{t}(f)=\Sf{s}(g)$ and every element $F \in \Repf{G}$: $$\zeta(F)(g\circ f)\,\, =\,\, \zeta(F_1)(g)\zeta(F_2)(f), \;\text{ where } \Delta(F)=F_1\tensor{B}F_2.\,\text{(summation understood)}$$ \label{223}
\end{enumerate}
\end{proposition}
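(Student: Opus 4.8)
The plan is to put $\zeta$ on the generators $\bara{\varphi\tensor{\tend{\Ee}}p}$ by the formula \eqref{Eq:zeta}, check that it descends from the coend, and then verify the stated properties one at a time; injectivity is the only genuine difficulty. Writing $\langle-,-\rangle$ for the fibrewise evaluation of $\Ee^*$ on $\Ee$ and juxtaposition for application of linear maps, I would first settle well-definedness: an element of $\Repf{G}$ is a class modulo the balancing over $\tend{\Ee}$ and the relations $\jJ_{\Rep{G}}$, and both are generated by morphisms of representations, so it suffices to see that for a morphism $\alpha\colon(\Ee,\varrho^{\Ee})\to(\Ff,\varrho^{\Ff})$, $\varphi\in\Gama{\Ff^*}$ and $p\in\Gama{\Ee}$ one has
$$\big\langle \varphi(\Sf{t}(g)),\,\alpha_{\Sf{t}(g)}\,\varrho^{\Ee}_g\,p(\Sf{s}(g))\big\rangle\,=\,\big\langle \varphi(\Sf{t}(g)),\,\varrho^{\Ff}_g\,\alpha_{\Sf{s}(g)}\,p(\Sf{s}(g))\big\rangle$$
for all $g\in\gG_1$; this is exactly the commutativity of the square \eqref{Eq:diag}. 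Hence $\zeta$ is a well defined $\Bbbk$-linear map into $\algt{\gG}$.

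For the algebra-map assertion everything is fibrewise. Multiplicativity uses that the product of $\bara{\varphi\tensor{}p}$ and $\bara{\psi\tensor{}q}$ lives over $\Ee\tensor{}\Ff$ with action $\varrho^{\Ee}_g\tensor{}\varrho^{\Ff}_g$ by \eqref{Eq:otimes}, together with the fact that evaluating a tensor of functionals on a tensor of vectors is the product of the evaluations; this yields $\zeta(F\cdot F')(g)=\zeta(F)(g)\,\zeta(F')(g)$. The unit maps to the constant function $1$ because $\varrho^{\Ii}_g=id_{\Bbbk}$. Finally $(\algb{\gG}\tensor{\Bbbk}\algb{\gG})$-linearity reduces to the two identities $\zeta(\Sf{s}(a))=\Sf{s}^*(a)$ and $\zeta(\Sf{t}(a))=\Sf{t}^*(a)$, both read off at once from the explicit source and target maps and from $\varrho^{\Ii}=id$.

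The three compatibilities are short fibrewise computations. For \eqref{221}, evaluation at $\iota(x)$ forces $\Sf{s}(\iota(x))=\Sf{t}(\iota(x))=x$ and, by the cocycle \eqref{Eq:cocycle}, $\varrho^{\Ee}_{\iota(x)}=id$, so $\zeta(\bara{\varphi\tensor{}p})(\iota(x))=\langle\varphi(x),p(x)\rangle=\varepsilon(\bara{\varphi\tensor{}p})(x)$. For \eqref{222} I would feed the dual-action formula \eqref{Eq:star}, which together with $\varrho^{\Ee}_{g^{-1}}=(\varrho^{\Ee}_g)^{-1}$ gives $\varrho^{\Ee^*}_g=(\varrho^{\Ee}_{g^{-1}})^*$, and the identification $\Ee\cong(\Ee^*)^*$ into the explicit antipode; using $\Sf{t}(g^{-1})=\Sf{s}(g)$ and $\Sf{s}(g^{-1})=\Sf{t}(g)$ both sides become $\langle\varphi(\Sf{s}(g)),\varrho^{\Ee}_{g^{-1}}p(\Sf{t}(g))\rangle$. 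For \eqref{223} I would expand $\Delta$ as displayed, put $y=\Sf{s}(g)=\Sf{t}(f)$, and collapse the dual-basis sum by the fibrewise completeness $\sum_i\langle s_i^*(y),w\rangle\,s_i(y)=w$ for $w\in E_y$; the cocycle identity $\varrho^{\Ee}_g\,\varrho^{\Ee}_f=\varrho^{\Ee}_{g\circ f}$ then produces $\zeta(F)(g\circ f)$.

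The hard part is injectivity, and I would exploit that for a discrete groupoid $\algb{\gG}=\mathrm{Maps}(\gG_0,\Bbbk)$ and $\algt{\gG}=\mathrm{Maps}(\gG_1,\Bbbk)$ are pointwise algebras rich in idempotents. Since $\zeta$ is $(\algb{\gG}\tensor{\Bbbk}\algb{\gG})$-linear, it respects the decomposition along pairs $(y,x)\in\gG_0\times\gG_0$ cut out by the object-indicator idempotents, so it is enough to prove injectivity of each block, whose target is $\mathrm{Maps}(\gG(x,y),\Bbbk)$ with $\gG(x,y)=\{\,g:\Sf{s}(g)=x,\ \Sf{t}(g)=y\,\}$. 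First I would reduce an arbitrary class to the form $\bara{\varphi\tensor{}p}$ for a single representation by replacing a finite family $\{\Ee_k\}$ with $\bigoplus_k\Ee_k$ and using the coend relations attached to the structural inclusions. If $x$ and $y$ lie in different orbits, then $\gG(x,y)=\emptyset$ and I would check that the block already vanishes on the source side: the orbit-indicator of $y$ is an endomorphism of representations, so the balancing relation lets one move it across the tensor, where it annihilates $p$. If $x$ and $y$ lie in the same orbit, I would fix a base arrow $g_0\colon x\to y$, write each $g\in\gG(x,y)$ as $g_0\circ h$ with $h$ in the isotropy group $\gG(x,x)$, and thereby identify the block with the classical matrix-coefficient map of the discrete group $\gG(x,x)$ on the fibre $E_x$. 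Injectivity of the latter is the classical statement that the coefficient map of a discrete group is injective, i.e.\ the only linear relations among matrix coefficients are those imposed by intertwiners; transporting this back through the equivalence between a connected groupoid and its isotropy group finishes the block. The delicate point to secure is precisely this last equivalence together with the verification that the orbit decomposition survives on the source side, so that no relation beyond the intertwiner ones is lost.
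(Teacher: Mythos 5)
Your treatment of well-definedness (via the squares \eqref{Eq:diag} and the generators of $\jJ_{\Rep{G}}$), of the $(B\tensor{\Bbbk}B)$-algebra map property, and of items (1)--(3) coincides with the paper's proof; the fibrewise computations you sketch for (2) and (3) are exactly the ones carried out there. The divergence is in the injectivity argument, and that is where there is a genuine gap. The paper argues globally: it identifies the image $\vV(\gG)=\zeta(\Repf{G})$ inside $\algt{\gG}$, equips it with a dinatural transformation $\xi_{\Ee}\colon \Gama{\Ee}^*\tensor{\Bbbk}\Gama{\Ee}\to\vV(\gG)$, and checks that $(\vV(\gG),\xi)$ is itself a coend of $\omega(-)^*\tensor{\Bbbk}\omega(-)$, so that $\zeta$ is an isomorphism onto $\vV(\gG)$ by uniqueness of coends. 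You instead cut everything into blocks indexed by pairs of objects and reduce to the classical injectivity of the coefficient map of a discrete group.

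The fatal step is the very first reduction. From ``each block map is injective'' together with ``$\zeta\big((e_y\tensor{\Bbbk}e_x)F\big)=0$ for all $x,y$'' you may conclude $(e_y\tensor{\Bbbk}e_x)F=0$ for all $x,y$, but to deduce $F=0$ you need the canonical map $\Repf{G}\to\prod_{x,y}(e_y\tensor{\Bbbk}e_x)\Repf{G}$ to be injective. When $\gG_0$ is infinite the idempotents $e_y\tensor{\Bbbk}e_x$ do not sum to $1$ in $B\tensor{\Bbbk}B$, and a quotient of a $(B\tensor{\Bbbk}B)$-module by a submodule such as $\jJ_{\Rep{G}}$ can perfectly well contain nonzero elements annihilated by every $e_y\tensor{\Bbbk}e_x$: already $B\tensor{\Bbbk}B$ modulo the ideal generated by all the $e_y\tensor{\Bbbk}e_x$ is a nonzero module with this property. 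So ``it is enough to prove injectivity of each block'' is precisely what is not available at this stage; it would follow, for instance, from the projectivity of $\Repf{G}$ over $B\tensor{\Bbbk}B$, but that is Proposition \ref{prop:GT}, obtained from the Tannakian machinery rather than from an elementary computation, and you do not invoke it. The remaining ingredients of your plan --- the direct-sum reduction to a single representation, the vanishing of cross-orbit blocks via the orbit-indicator endomorphism, and the identification of a transitive block with the isotropy-group coefficient map --- are sound (and for finite $\gG_0$ the whole argument closes), but without repairing the passage from blockwise to global injectivity the proof is incomplete.
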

\begin{proof} 
An easy verification, using the definition of the two-sided ideal $\jJ_{\Rep{G}}$ of equation \eqref{Eq:JG} (see Appendix) and diagrams \eqref{Eq:diag}, shows that $\zeta$ is a well defined map. Since the multiplication in $\Repf{G}$ is defined via the tensor product of representations and that of $\algt{\gG}$ is defined componentwise,  we easily check that $\zeta$ is multiplicative and unital.  

We need to check that $\zeta$ is injective. To this end we define the following $(B\tensor{\Bbbk}B)$-subalgebra of the total algebra  $\algt{\gG}$. Take a $\gG$-representation $\varrho^{\Ee}$  with rank $n$, we denote by $\vV(\varrho^{\Ee})$ the  $(B\tensor{\Bbbk}B)$-sub-bimodule  of $\algt{\gG}$ generated by   the set of functions $\{a_{\scriptscriptstyle{ij}}\}_{1 \leq i,j \leq n}$ where for each $g \in \gG_1$ we have $\varrho^{\Ee}_g=(a_{\scriptscriptstyle{ij}}^g)_{1 \leq i,j \leq n}$ the $n$-square matrix representing the $\Bbbk$-linear isomorphism $\varrho^{\Ee}_g$.  
That is, an element of the form $(\lambda\tensor{\Bbbk}\gamma). a_{ij} \in \vV(\varrho^{\Ee})$ defines on $\gG_1$ the function  $g \mapsto \lambda(\Sf{t}(g)) a_{ij}^g \gamma(\Sf{s}(g))$.
Now, we define the following  $(B\tensor{\Bbbk}B)$-algebra
$ \vV(\gG)\,=\,\sum_{\varrho^{\Ee}\, \in \, \Rep{G}} \vV(\varrho^{\Ee})$ whose 
 multiplication is defined as in $\Repf{G}$ by using the tensor product of $\gG$-representations and the unit is given by the identity representation $\varrho^{\Ii}$.  
An easy computation shows now that  $\zeta(\Repf{G}) \,=\, \vV(\gG)$. 

On the other hand, for any $\gG$-representation $\varrho^{\Ee}$, we have a well defined morphism of $(B\tensor{\Bbbk}B)$-modules  
$$
\xi_{E}: \Gama{\Ee}^*\tensor{\Bbbk}\Gama{\Ee} \longrightarrow \vV(\gG),\quad \Big(\varphi \tensor{\Bbbk}p \longmapsto \sum_i\varphi_i\tensor{\Bbbk}p_j .a_{ij}\Big),
$$
where, for every $x \in \gG_0$,  we expressed  $\varphi(x) \,=\, \sum_i \varphi_i(x)s_i(x)$ in $E_{x}^*$ and $p(x)\,=\, \sum_j p_j(x)s_j(x)$ in $E_{x}$ by taking  a 'global' dual basis $\{s_i,s_i^*\}_i$ of the $B$-module $\Gama{\Ee}$. 
It turns out that the pair $(\vV(\gG), \xi_{-})$ is a dinatural transformation. Furthermore, by construction one can shows that $(\vV(\gG),\xi_{-})$ is the coend of the functor $\omega^*(-)\tensor{\Bbbk}\omega(-)$. Therefore, by the universal property we have an isomorphism\footnote{In fact an isomorphism of commutative Hopf $B$-algebroids, where $\vV(\gG)$ can be endowed with a natural structure of a   Hopf algebroid.} $\vV(\gG) \, \cong \, \Repf{G}$ via the map $\zeta$, and thus $\zeta$ is injective.

The equality of item \eqref{221}  in the last statement is clear. The second item  follows by an easy computation using the formula of equation \eqref{Eq:star}. The equality of item \eqref{223}  is obtained as follows. Take $g, f \in \gG_1$ with $\Sf{s}(g)=\Sf{t}(f)$, it suffices to check the stated equality of  elements of the form $\bara{\varphi\tensor{\tend{\Ee}}p}$ in $\Repf{G}$ given by some $\gG$-representation $\varrho^{\Ee}$ with rank $n$ and a global dual basis  $\{s_i,s_i^*\}_i^n$ of sections. So, from one hand we have  
\begin{eqnarray*}
\zeta\big(\bara{\varphi\tensor{\tend{\Ee}}p}\big) (gf)&=&  
\varphi(\Sf{t}(gf))\lr{
\varrho_{gf}^{\Ee}\big(p(\Sf{s}(gf))\big)} \\ &=&  \varphi(\Sf{t}(g))\lr{
\varrho_{gf}^{\Ee}\big(p(\Sf{s}(f))\big)} \\ & \overset{\eqref{Eq:cocycle}}{=} &   \varphi(\Sf{t}(g))\lr{
\varrho_g^{\Ee}\big(\varrho_f^{\Ee}(p(\Sf{s}(f)))\big)}
\end{eqnarray*}
On the other hand, we have 
\begin{eqnarray*}
\zeta\Big(\big(\bara{\varphi\tensor{\tend{\Ee}}p}\big)_1\Big)(g)  \, \zeta\Big(\big(\bara{\varphi\tensor{\tend{\Ee}}p}\big)_2\Big)(f) &=&  \sum_i  \zeta\big(\bara{\varphi\tensor{\tend{\Ee}}s_i} \big)(g) \,  \zeta\big(\bara{s_i^*\tensor{\tend{\Ee}}p} \big)(f) \\ & = &  \sum_i \varphi(\Sf{t}(g))\lr{
\varrho_g^{\Ee}\big(s_i(\Sf{s}(g))\big)}
s_i^*(\Sf{t}(f))\lr{
\varrho_f^{\Ee}\big(p(\Sf{s}(f))\big)} \\ &=& 
\varphi(\Sf{t}(g))\lr{ \sum_i
\varrho_g^{\Ee}\big(s_i(\Sf{s}(g))\big) s_i^*(\Sf{t}(f))\lr{
\varrho_f^{\Ee}\big(p(\Sf{s}(f))\big)}} \\ &=& 
\varphi(\Sf{t}(g))\lr{ 
\varrho_g^{\Ee}\lr{ \sum_i s_i(\Sf{s}(g))s_i^*(\Sf{t}(f))\lr{
\varrho_f^{\Ee}\big(p(\Sf{s}(f))\big)}}} \\ &=& 
\varphi(\Sf{t}(g))\lr{ 
\varrho_g^{\Ee}\big(
\varrho_f^{\Ee}\big(p(\Sf{s}(f))\big)\big)}
\end{eqnarray*}
Therefore, $\zeta\big(\bara{\varphi\tensor{\tend{\Ee}}p}\big) (gf) = \zeta\Big(\big(\bara{\varphi\tensor{\tend{\Ee}}p}\big)_1\Big)(g)  \, \zeta\Big(\big(\bara{\varphi\tensor{\tend{\Ee}}p}\big)_2\Big)(f)$ which finishes the proof.
\end{proof}

\begin{example}\label{exmp:UV}
If we start with a groupoid whose source is  equal to its target, in other words,  a  disjoint union of groups (or group bundle) $\cup_{x \in X}G_x$ parametrized by a non empty set $X$. Then the associated algebra of representative functions is in this case a commutative Hopf algebra over the base algebra $B={\rm M}_{\Bbbk}(X)$ (i.e. a Hopf algebroid with same source and target).  In this case if $X$ is reduced to a point, which means that we are given a single group $G$. Then $B=\Bbbk$ and  by Example \ref{exam:action}, $\mathscr{R}_{\Bbbk}(G)$ is exactly the commutative Hopf  $\Bbbk$-algebra of representative functions on $G$. That is, it coincides with the \emph{finite dual} $(\Bbbk G)^o$ of the group algebra $\Bbbk G$, see \cite{Abe:book, Hochschild:book}.

A trivial case of the disjoint union of groups  $\cup_{x \in X}G_x$ is when each group of the $G_x$'s has only one element (the neutral element). Thus, for any non empty set $X$ one can consider the groupoid $\Uu(X)$ known as \emph{the unit groupoid} of $X$\footnote{The terminology is that of \cite{Cartier:2008}.}, where  $\Uu(X)_1=\Uu(X)_0=X$ and $\Sf{t}=\Sf{s}=\iota=id_X$. A finite dimensional $\Uu(X)$-representation is nothing but a set of the form $X \times V$ where $V$ is a finite dimensional $\Bbbk$-vector space.  Under this description, the Hopf $B$-algebra of representative functions is given by following quotient of $B$-module
$$
\mathscr{R}_{\Bbbk}(\Uu(X))\,\, =\,\, \frac{ \lr{\bigoplus_{n \in \mathbb{N}}B^n  \tensor{M_n(\Bbbk)}B^n}}{ \Big\langle \underset{}{}  u  \tensor{M_n(\Bbbk)} (\lambda_{\scriptscriptstyle{ i j}}) v - u (\lambda_{\scriptscriptstyle{ ij}})\tensor{M_m(\Bbbk)} v \Big \rangle_{u \in B^n,\, v\in B^m,\, (\lambda_{\scriptscriptstyle{ ij}})\, \in M_{n,m}(\Bbbk)}}.
$$ 

Another less trivial groupoid, is the \emph{groupoid of pairs}  $\Vv(X)$ (or \emph{fine groupoid} in the terminology of \cite{Brown:1987} which is a particular case of \emph{principal groupoids} \cite{Renault:1980}). Here $\Vv(X)_1=X \times X$, $\Vv(X)_0=X$ with   $\Sf{s}=pr_1$, $\Sf{t}=pr_2$ (the first and second projections)   and $\iota=\delta$ (the diagonal map). The composition is understood.  In this case a finite dimensional $\Vv(X)$-representation is a bundle $\cup_{x \in X}E_x$ with a 'type fiber' an  $n$-dimensional  $\Bbbk$-vector space $V$, together with a family of invertible $n$-square matrices $\{(a_{\scriptscriptstyle{ij}}^{(x,y)})_{\scriptscriptstyle{i,j}}\}_{x,y \in \, X}$ in $M_n(\Bbbk)$ satisfying
$$
\lr{a_{\scriptscriptstyle{ij}}^{(x,y)}}_{\scriptscriptstyle{i,j}} \, \lr{a_{\scriptscriptstyle{ij}}^{(y,z)}}_{\scriptscriptstyle{i,j}}\,=\, \lr{a_{\scriptscriptstyle{ij}}^{(y,z)}}_{\scriptscriptstyle{i,j}},\quad \lr{a_{\scriptscriptstyle{ij}}^{(x,x)}}_{\scriptscriptstyle{i,j}}\,=\, I_n, \,\, \text{ for every }\, x, y, z \in X. 
$$
The description of the commutative Hopf algebroid $\mathscr{R}_{\Bbbk}(\Vv(X))$  is approximately similar to that of $\mathscr{R}_{\Bbbk}(\Uu(X))$.  Thus, instate of taking the $n$-square matrix algebra in the above nominator of $\mathscr{R}_{\Bbbk}(\Uu(X))$, we take  the following $\Bbbk$-subalgebra of $M_n(B)$ consisting of matrices which are 'locally' conjugated by the $(a_{\scriptscriptstyle{i,j}})_{\scriptscriptstyle{i,j}}$'s. That is, matrices $(\lambda_{\scriptscriptstyle{ij}})_{\scriptscriptstyle{i,j}} \in M_n(B)$  such that
$$ \lr{\lambda_{\scriptscriptstyle{ij}}^y}_{\scriptscriptstyle{i,j}} \,  \lr{a_{\scriptscriptstyle{ij}}^{(x,y)}}_{\scriptscriptstyle{i,j}}\,\, =\,\, \lr{a_{\scriptscriptstyle{ij}}^{(x,y)}}_{\scriptscriptstyle{i,j}} \,  \lr{\lambda_{\scriptscriptstyle{ij}}^x}_{\scriptscriptstyle{i,j}},\,\, \text{ for every } x, y \in X.$$

It is noteworthy to mention, that  a  more precise and complete  description of both Hopf  algebroids,  by means of generators and relations,   is far from being obvious even under strong hypothesis of finiteness on the set $X$.  
\end{example}

\begin{example}\label{exam:FunctTransGrp}
Let  $H$ be a group with identity element $e$, and $X$ any set. Consider the following groupoid
$$
\gG:\;\xymatrix@C=50pt{X \times H \times X \ar@<1.2ex>@{->}|-{\scriptstyle{pr_1}}[r] \ar@<-1.2ex>@{->}|-{\scriptstyle{pr_3}}[r] & \ar@{->}|-{ \scriptstyle{\iota} }[l] X, }
$$
with $\Sf{s}=pr_1$ and  $\Sf{t}=pr_3$, the first and third projections,  and where $\iota(x) =(x,e,x)$, for every $x \in X$. The composition and the inverse maps are given by 
$$
(x,g,y)\, .\, (y, h, z) \,\, =\,\, (x, gh, z), \qquad (x,g,y)^{-1}\,=\, (y,g^{-1},x).
$$

Let $\rR_{\Bbbk}(H)$ be the Hopf $\Bbbk$-algebra of representative functions on $H$ which we  consider as a Hopf algebroid with source equal target. Consider then  its  extended Hopf algebroid $\big({\rm M}_{\Bbbk}(X),  {\rm M}_{\Bbbk}(X)\tensor{\Bbbk} \rR_{\Bbbk}(H) \tensor{\Bbbk} {\rm M}_{\Bbbk}(X)\big)$.  One can easily check then that the image of $\Repf{G}$ in the total algebra  
${\rm M}_{\Bbbk}(X\times H \times X)$  by the map $\zeta$ of Proposition \ref{prop:zeta} coincides with  the image of the canonical map $${\rm M}_{\Bbbk}(X)\tensor{\Bbbk} \rR_{\Bbbk}(H) \tensor{\Bbbk} {\rm M}_{\Bbbk}(X) \hookrightarrow  {\rm M}_{\Bbbk}(X\times H \times X).$$
Therefore, there is an isomorphism of ${\rm M}_{\Bbbk}(X)$-bimodules 
$\Repf{G} \cong {\rm M}_{\Bbbk}(X)\tensor{\Bbbk} \rR_{\Bbbk}(H) \tensor{\Bbbk} {\rm M}_{\Bbbk}(X)$ 
which by the universal property of $\Repf{G}$ is an isomorphism of Hopf ${\rm M}_{\Bbbk}(X)$-algebroids. Furthermore, the canonical  morphism of  Hopf algebroids 
$$
(\Bbbk, \rR_{\Bbbk}(H)) \longrightarrow \big({\rm M}_{\Bbbk}(X),  {\rm M}_{\Bbbk}(X)\tensor{\Bbbk} \rR_{\Bbbk}(H) \tensor{\Bbbk} {\rm M}_{\Bbbk}(X)\big) 
$$
coincides, up to this isomorphism, with the morphism $\B{\Rr}(pr_2): (\Bbbk, \rR_{\Bbbk}(H)) \to \big({\rm M}_{\Bbbk}(X),  \Repf{G}\big) $  of Hopf algebroids  which corresponds to the obvious morphism of groupoids $pr_2: \gG \to H$.
\end{example}

Now we come back to the general situation. So let $\gG$ be a groupoid with base algebra $B=\algb{\gG}$. 
By applying Deligne's Theorem \cite[Théorème 5.2]{Bruguieres:1994} together with \cite[Théorème 7.1]{Bruguieres:1994}  to Corollary \ref{coro:1}, we know that $(B,\Repf{G})$ is a \emph{transitive commutative  Hopf algebroid}, and so it is \emph{geometrically transitive} by \cite[Théorème 8.2]{Bruguieres:1994}, see also Lemma \ref{lema:LK} in the Appendix.  More consequences of this property are stated in Example \ref{exam:Gg}(2).

\begin{proposition}\label{prop:GT}
Let $\gG$ be a groupoid with $B$ its base $\Bbbk$-algebra.  Then the algebra  of representative functions $\Repf{G}$ is a geometrically transitive Hopf  $B$-algebroid. Equivalently, $\Repf{G}$ is a projective and faithfully flat $(B\tensor{\Bbbk}B)$-module.
\end{proposition}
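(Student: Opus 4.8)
The plan is to recognise $(B,\Repf{G})$ as the commutative Hopf algebroid reconstructed from the Tannakian datum $(\Rep{G},\omega_{\gG})$ and then to feed this datum into Brugui\`eres' structure theorems. All the ingredients are already available: by Proposition \ref{prop:tannaka} the pair $(\Rep{G},\omega_{\gG})$ is a Tannakian $\Bbbk$-linear category whose fibre functor $\omega_{\gG}$ takes values in $\proj{\gG}$, so that every representation is carried to a finitely generated projective $B$-module; by Corollary \ref{coro:1} the category $\Rep{G}$ is locally of finite type over $\Bbbk$; and by Corollary and Definition \ref{coro:comatrix} the algebra $\Repf{G}=\lL_{\Bbbk}(\omega_{\gG})$ is exactly the universal coend attached to $\omega_{\gG}$, equipped with the Hopf algebroid structure written out after that statement. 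Through Deligne's reconstruction theorem \cite[Th\'eor\`eme 5.2]{Bruguieres:1994} the functor $\omega_{\gG}$ identifies $\Rep{G}$ with $\frcomod{\Repf{G}}$, so these three properties become statements about $\Repf{G}$-comodules.

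Next I would check the defining conditions. Condition $\Sf{ST}1$ is immediate, since $\omega_{\gG}$ lands in $\proj{\gG}$: after applying $\omega_{\gG}$ every object of $\frcomod{\Repf{G}}$ is a finitely generated projective $B$-module. Condition $\Sf{ST}3$ is precisely the local finiteness recorded in Corollary \ref{coro:1}. For the transitivity condition I would use the identification $\End{\frcomod{\Repf{G}}}{B}\cong\End{\Rep{G}}{\varrho^{\Ii}}\cong\Bbbk$ already exploited in the proof of Proposition \ref{prop:tannaka}, the unit comodule $B$ corresponding to the identity representation $(\Ii,\varrho^{\Ii})$. Feeding this into Brugui\`eres' \cite[Th\'eor\`eme 7.1]{Bruguieres:1994} yields that $(B,\Repf{G})$ is a transitive commutative Hopf algebroid; condition $\Sf{ST}2$ then costs nothing, being a consequence of the $B$-projectivity of $\Repf{G}$ recalled in the introductory discussion.

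It then remains to promote transitivity to geometric transitivity and to extract the projective, faithfully flat reformulation. For the former I would invoke \cite[Th\'eor\`eme 8.2]{Bruguieres:1994} (see also Lemma \ref{lema:LK}), according to which, over a base field, a transitive commutative Hopf algebroid that is geometrically semi-transitive is automatically geometrically transitive. The final clause of the statement is then merely the characterisation recalled in the introduction, \cite[Proposition 7.3]{Bruguieres:1994}: a commutative Hopf algebroid $(R,\Hh)$ is geometrically transitive if and only if $\Hh$ is a projective and faithfully flat $(R\tensor{\Bbbk}R)$-module. Applied to $(B,\Repf{G})$ this is exactly the equivalent formulation asserted in the Proposition.

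The genuine obstacle, and the only step that is not pure bookkeeping over Proposition \ref{prop:tannaka} and Corollary \ref{coro:1}, is the geometric semi-transitivity of $\Repf{G}$, i.e.\ its projectivity as a $(B\tensor{\Bbbk}B)$-module. Conditions $\Sf{ST}1$, $\Sf{ST}3$ and the transitivity of the unit are formal consequences of the Tannakian picture, whereas the $(B\tensor{\Bbbk}B)$-projectivity is the sole assertion of a truly module-theoretic nature. To overcome it I would lean on the comatrix presentation $\Repf{G}\cong\B{\Sigma}^{\dag}\tensor{\Bb}\B{\Sigma}$ of Corollary and Definition \ref{coro:comatrix}, for which the fibrewise dual bases $\{s_i,s_i^{*}\}$ built before Proposition \ref{prop:tannaka} assemble into an explicit dual basis over $B\tensor{\Bbbk}B$; equivalently one reads off projectivity and faithful flatness directly from Brugui\`eres' proof of \cite[Proposition 7.3]{Bruguieres:1994}, which is the route implicit in the paragraph preceding the statement.
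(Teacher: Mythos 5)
Your overall route coincides with the paper's: identify $(B,\Repf{G})$ with $\lL_{\Bbbk}(\omega_{\gG})$ for the Tannakian datum of Proposition \ref{prop:tannaka}, use Corollary \ref{coro:1} for local finiteness, and invoke Brugui\`eres' Th\'eor\`emes 5.2 and 7.1 to conclude that $(B,\Repf{G})$ is a transitive commutative Hopf algebroid. The divergence --- and the gap --- lies in the passage from transitive to geometrically transitive. You read Th\'eor\`eme 8.2 as saying that a transitive commutative Hopf algebroid which is \emph{already} geometrically semi-transitive is geometrically transitive; with the definitions recalled in the introduction this is essentially tautological (geometric transitivity is, by definition, geometric semi-transitivity together with $\mathrm{End}(R)\cong\Bbbk$), so you are then forced to supply the $(B\tensor{\Bbbk}B)$-projectivity by hand. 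But the actual content of Th\'eor\`eme 8.2, as the paper uses it in the paragraph preceding the proposition and in Lemma \ref{lema:LK}, is that for a \emph{commutative} Hopf algebroid over a field, transitivity alone already implies geometric transitivity, equivalently projectivity and faithful flatness over $B\tensor{\Bbbk}B$. That is the nontrivial theorem carrying the weight; no separate verification of $(B\tensor{\Bbbk}B)$-projectivity is required.

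The argument you propose for that extra step does not go through as stated. The dual bases $\{s_i,s_i^*\}$ constructed before Proposition \ref{prop:tannaka} are dual bases of each $\Gama{\Ee}$ as a $B$-module; they exhibit each summand $\Gama{\Ee}^*\tensor{\Bbbk}\Gama{\Ee}$ as a finitely generated projective $(B\tensor{\Bbbk}B)$-module, but $\Repf{G}$ is the \emph{quotient} of the direct sum of these by the ideal $\jJ_{\Rep{G}}$ of \eqref{Eq:JG}, and a quotient of projectives need not be projective; the fibrewise dual bases do not descend to a $(B\tensor{\Bbbk}B)$-dual basis of the coend. Likewise, ``reading off'' projectivity from the proof of Proposition 7.3 is circular: that proposition \emph{characterises} geometric transitivity by projectivity and faithful flatness over $R\tensor{\Bbbk}R$, so one cannot extract the property from it without already knowing geometric transitivity. (Remark \ref{rem:GT} does sketch a comatrix route --- projectivity of $\B{\Sigma}$ as a unital right $(\Bb\tensor{\Bbbk}B)$-module implies projectivity of $\B{\Sigma}^{\dag}\tensor{\Bb}\B{\Sigma}$ --- but that is a different hypothesis and would require its own proof.) The repair is simply to use Th\'eor\`eme 8.2 the way the paper does.
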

\begin{proof}
This is a direct consequence of \cite[Théorèmes 5.2 et  7.1]{Bruguieres:1994} (see also Lemma \ref{lema:LK}).
\end{proof}

\begin{remark}\label{rem:RFTops}
Let us consider now a  topological groupoid $\gG$ with compact Hausdorff base space $\gG_0$.  Take its category of representations $\tRep{G}$ with the "fiber" functor $\omega^{top}: \tRep{G} \to \cproj{\gG}$ as in Remark \ref{rem:NotTanaka} ($\Bbbk$ is the field of  real or complex numbers).  The construction of the  $(\calgb{\gG}\tensor{\Bbbk}\calgb{\gG})$-algebra of \emph{ continuous representative functions} is also possible in this context. In fact, associated to the pair $(\tRep{G},\omega^{top})$ we can construct, in similar way as in   equation \eqref{Eq:Lomega} (see Appendix below), a commutative Hopf $\calgb{\gG}$-algebroid $\tRepf{G}$ which of course in this case is no longer geometrically transitive. The image of $\tRepf{G}$ by the  algebra map $\zeta$  of Proposition \ref{prop:zeta}, lands now in the total algebra $\calgt{\gG}$ of all continuous functions from $\gG_1$ to $\Bbbk$. The fact that $\tRep{G}$ is no longer abelian prevents us to apply Deligne-Bruguières's machinery to well understand the geometric nature of the commutative Hopf algbebroid $\tRepf{G}$. This is why perhaps this context suggests then to study  pseudo-Tannakian $\Bbbk$-linear categories (see Remark \ref{rem:NotTanaka}) in relation with commutative $C^*$-algebras. 
\end{remark}

\section{Duality between discrete groupoids and Hopf algebroids.}\label{sec:3}
This section contains our main result, namely, Theorem \ref{thm:main}. We show that there is a duality between  the category of discrete groupoids and the category of geometrically transitive Hopf algebroids.  By a duality we mean here a kind of an adjunction between contravariant functors. Specifically, let $\mathscr{A}$ and $\mathscr{B}$ two additive categories. We say that  there is a duality between $\mathscr{A}$  and $\mathscr{B}$,  if there exists a pair $\xymatrix{F:  \mathscr{A} \ar@<-1.2pt>[r] & \ar@<-1.2pt>[l] \mathscr{B} : G}$ of contravariant additive functors together with natural transformations $\theta: id_{\mathscr{A}} \to GF$, $\eta:id_{\mathscr{B}} \to FG$ such that $F\theta \circ \eta_F=F$ and $G\eta \circ \theta_G=G$.

\subsection{The representative functions functor $\mathscr{R}_{\Bbbk}: \Grpd^{\scriptscriptstyle{op}} \to \CHAlgd_{\Bbbk}$.}\label{ssec:R}
We denote by $\Grpd$ the category of discrete groupoids and by $\CHAlgd_{\Bbbk}$ the category of commutative Hopf algebroids with ground field $\Bbbk$, that is, all the  involved commutative rings are $\Bbbk$-algebras.  

Let $\phi: \gG \to \hH$ be a morphism of groupoids. To distinguish between the fiber functors of $\gG$ and $\hH$,  we use the following notations  $\omega^{\gG}: \Rep{G} \to \proj{\gG}$ and $\omega^{\hH}: \Rep{H} \to \proj{\hH}$. We consider $\algb{\gG}$ as an $\algb{\hH}$-module via the extension ${\rm M}_{\Bbbk}(\phi_{\scriptscriptstyle{0}}): \algb{\hH} \to \algb{\gG}$.  Our aim here is to show that representative functions is a functorial construction.

\begin{lemma}\label{lema:cuadro}
Let $\phi: \gG \to \hH$ be a morphism of  groupoids and consider  the restriction functor
$\B{\Rr}(\phi)$ as defined in Lemma \ref {lema:indfunct}. Then the following diagram of functors
$$
\xymatrix@C=60pt{ \Rep{H} \ar@{->}^-{\B{\Rr}(\phi)}[rr] \ar@{->}_-{\omega^{\hH}}[d] & & \Rep{G} \ar@{->}^-{\omega^{\gG}}[d] \\ \proj{\hH} \ar@{->}_-{{\rm M}_{\Bbbk}(\phi_0)^*}[rr] & & \proj{\gG}}
$$ is strictly commutative.
\end{lemma}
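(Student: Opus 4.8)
The goal is to prove that the square of functors built from the restriction functor $\B{\Rr}(\phi)$ and the two fiber functors, together with the extension-of-scalars functor ${\rm M}_{\Bbbk}(\phi_0)^*$, commutes on the nose (not just up to natural isomorphism). My plan is to verify this by unwinding the definitions on objects and on morphisms, checking that the two composites agree literally as functors $\Rep{H} \to \proj{\gG}$.

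First I would take an arbitrary representation $(\Pp,\varrho^{\Pp}) \in \Rep{H}$ and compute both paths of the square. Going down-then-across, we first apply $\omega^{\hH}$ to obtain the $\algb{\hH}$-module of global sections $\B{\Gamma}(\Pp)$, and then extend scalars along ${\rm M}_{\Bbbk}(\phi_0)$ to get $\B{\Gamma}(\Pp)\tensor{\algb{\hH}}\algb{\gG}$. Going across-then-down, we first apply $\B{\Rr}(\phi)$ to obtain the $\gG$-representation $(\phi_0^*\Pp,\phi_1^*\varrho^{\Pp})$ with fibres $(\phi_0^*\Pp)_x = P_{\phi_0(x)}$, and then apply $\omega^{\gG}$ to get the $\algb{\gG}$-module $\B{\Gamma}(\phi_0^*\Pp)$ of sections of the pulled-back bundle. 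The heart of the argument is the identification of $\B{\Gamma}(\phi_0^*\Pp)$ with $\B{\Gamma}(\Pp)\tensor{\algb{\hH}}\algb{\gG}$ as $\algb{\gG}$-modules; this is exactly the statement that forming global sections commutes with pulling back along $\phi_0$, which for the trivial bundles at hand amounts to the base-change compatibility of sections. I expect the cleanest route is via the explicit global dual basis: if $\{s_i,s_i^*\}$ is the dual basis of $\B{\Gamma}(\Pp)$ coming from a basis $\{v_1,\dots,v_n\}$ of the type fibre $V$ as constructed just before Proposition \ref{prop:tannaka}, then the sections $x \mapsto s_i(\phi_0(x)) = \varphi_{\phi_0(x)}(v_i)$ furnish precisely the analogous global dual basis for $\B{\Gamma}(\phi_0^*\Pp)$, since $\phi_0^*\Pp$ has the same type fibre $V$. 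This makes the two finitely generated projective $\algb{\gG}$-modules literally equal with matching dual bases.

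Next I would check strict commutativity on morphisms. Given a morphism $\{\alpha_y\}_{y \in \hH_0}$ in $\Rep{H}$, the down-then-across path produces $\omega^{\hH}(\alpha)\tensor{\algb{\hH}}\algb{\gG}$, which sends a section $s$ to $x \mapsto \alpha_{\phi_0(x)}(s(\phi_0(x)))$ after base change; the across-then-down path first yields the morphism $\B{\Rr}(\phi)(\alpha)$ with components $(\alpha_{\phi_0(x)})_{x \in \gG_0}$ and then applies $\omega^{\gG}$, giving the section-level map $t \mapsto [x \mapsto \alpha_{\phi_0(x)}(t(x))]$. Under the identification of the underlying modules from the previous step these two maps coincide, so the diagram commutes on morphisms as well.

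The main obstacle, and the only point requiring genuine care rather than bookkeeping, is to verify that the identification $\B{\Gamma}(\phi_0^*\Pp)\cong\B{\Gamma}(\Pp)\tensor{\algb{\hH}}\algb{\gG}$ is not merely a natural isomorphism but an honest equality of functors, since the lemma asserts that the square is \emph{strictly} commutative. Here I would lean on the fact (noted in Remark \ref{rem:vb}) that every $\gG$-representation is a trivial bundle $\gG_0 \times V$, so sections of $\phi_0^*\Pp$ are literally maps $\gG_0 \to V$ and the base-change isomorphism is the canonical one; choosing the dual bases compatibly as above removes any ambiguity and realises both composites as the same assignment on objects and morphisms. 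With these normalisations in place the verification is a direct, if slightly tedious, unwinding of the definitions, and no deeper input is needed.
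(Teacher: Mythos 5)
Your proof is correct and follows essentially the same route as the paper: both identify the global sections of the pulled-back bundle $\phi_0^*\Pp$ with the base-changed module $\B{\Gamma}(\Pp)\tensor{\algb{\hH}}\algb{\gG}$ and check the analogous identity on morphisms. You are in fact somewhat more careful than the paper about why this identification is an equality rather than a mere natural isomorphism (via the trivial-bundle structure and compatible dual bases), which the paper simply asserts.
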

\begin{proof}
Let $\varrho^{\Ff}$ be an $\hH$-representation. We know that the underlying bundle of $\B{\Rr}(\phi)(\varrho^{\Ff})$ is the induced  (pull-back) bundle $\phi_{\scriptscriptstyle{0}}^*(\Ff)$. Therefore, its 'global' sections are given by the tensor product module $\B{\Gamma}(\phi_{\scriptscriptstyle{0}}^*(\Ff))\,=\, \B{\Gamma}(\Ff)\tensor{\algb{\hH}}\algb{\gG}$. The same happens to morphisms between $\hH$-representations. That is, we have $\omega^{\hH} \circ \B{\Rr}(\phi)\,=\,  (-\tensor{\algb{\hH}}\algb{\gG}) \circ \omega^{\gG}$ and the state diagram is commutative.
\end{proof}

\begin{proposition}\label{prop:R}
The assignment $\mathscr{R}_{\Bbbk}:  \Grpd \to \CHAlgd_{\Bbbk}$ which sends any discrete groupoid $\gG$ to its Hopf algebroid $(\algb{\gG},\Repf{G})$ of representative functions, is a well defined contravariant functor with image in the full subcategory of geometrically transitive Hopf algebroids. Furthermore, for any  morphism of groupoids $\phi:\gG\to \hH$, we have the following commutative diagram of algebras
$$
\xymatrix@C=40pt{ \Repf{H} \ar@{->}^-{\mathscr{R}_{\Bbbk}(\phi)}[rr] \ar@{->}_-{\zeta_{\hH}}[d]  & & \Repf{G} \ar@{->}^-{\zeta_{\gG}}[d] \\ \algt{\hH} \ar@{->}_-{{\rm M}_{\Bbbk}(\phi_1)}[rr] & & \algt{\gG}} 
$$ where $\zeta$ is the algebra map of Proposition \ref{prop:zeta}.
\end{proposition}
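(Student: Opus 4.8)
The plan is to treat the object assignment and the morphism assignment separately. On objects there is nothing new: for every discrete groupoid $\gG$ the pair $(\algb{\gG},\Repf{G})$ has already been built via the reconstruction procedure of Corollary and Definition \ref{coro:comatrix}, and it is geometrically transitive by Proposition \ref{prop:GT}, so the image of $\mathscr{R}_{\Bbbk}$ automatically lands in the announced full subcategory. The genuine content is the construction of $\mathscr{R}_{\Bbbk}(\phi)$ for a morphism of groupoids $\phi\colon\gG\to\hH$ and the verification that it is a morphism of Hopf algebroids covering ${\rm M}_{\Bbbk}(\phi_0)$ which depends contravariantly and functorially on $\phi$. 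Although one could produce $\mathscr{R}_{\Bbbk}(\phi)_1\colon\Repf{H}\to\Repf{G}$ abstractly from the universal (coend) property of $\Repf{H}$ together with the strictly commutative square of Lemma \ref{lema:cuadro} and the strong monoidality of $\B{\Rr}(\phi)$ from Lemma \ref{lema:indfunct}, I would instead route everything through the injective realizations $\zeta_{\gG},\zeta_{\hH}$ of Proposition \ref{prop:zeta}, which makes each verification concrete and forces the displayed square to commute by design.

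First I would establish the key containment ${\rm M}_{\Bbbk}(\phi_1)\big(\vV(\hH)\big)\subseteq\vV(\gG)$, where $\vV(\gG)=\zeta_{\gG}(\Repf{G})$ is the subalgebra of matrix coefficients introduced in the proof of Proposition \ref{prop:zeta}. Fix an $\hH$-representation $\varrho^{\Ff}$ of rank $n$ with matrix coefficient functions $a_{ij}\colon\hH_1\to\Bbbk$, $f\mapsto a_{ij}^{f}$. By the very definition of the restriction functor in Lemma \ref{lema:indfunct} one has $(\B{\Rr}(\phi)\varrho^{\Ff})_g=\varrho^{\Ff}_{\phi_1(g)}$ for every $g\in\gG_1$, so the function ${\rm M}_{\Bbbk}(\phi_1)(a_{ij})\colon g\mapsto a_{ij}^{\phi_1(g)}$ is exactly the $(i,j)$ matrix coefficient of the $\gG$-representation $\B{\Rr}(\phi)\varrho^{\Ff}$, and hence lies in $\vV(\B{\Rr}(\phi)\varrho^{\Ff})\subseteq\vV(\gG)$. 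Since $\vV(\hH)$ is generated, over $\algb{\hH}\tensor{\Bbbk}\algb{\hH}$, by the matrix coefficients of all $\hH$-representations and ${\rm M}_{\Bbbk}(\phi_1)$ is an algebra map intertwining the base change ${\rm M}_{\Bbbk}(\phi_0)$, the containment follows. I then define
\[
\mathscr{R}_{\Bbbk}(\phi)_1\;:=\;\zeta_{\gG}^{-1}\circ{\rm M}_{\Bbbk}(\phi_1)\big|_{\vV(\hH)}\circ\zeta_{\hH}\colon\Repf{H}\longrightarrow\Repf{G},
\]
which makes the stated square commute by construction and is an algebra map because $\zeta_{\gG}$ is an injective algebra map while $\zeta_{\hH}$ is an algebra map.

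Next I would upgrade this algebra map to a morphism of Hopf algebroids $\big({\rm M}_{\Bbbk}(\phi_0),\mathscr{R}_{\Bbbk}(\phi)_1\big)$. The point is that, since $\phi$ is a functor, the set map $\phi_1$ commutes with all the groupoid operations, namely $\phi_1\circ\iota_{\gG}=\iota_{\hH}\circ\phi_0$, $\Sf{s}_{\hH}\circ\phi_1=\phi_0\circ\Sf{s}_{\gG}$, $\Sf{t}_{\hH}\circ\phi_1=\phi_0\circ\Sf{t}_{\gG}$, $\phi_1(g^{-1})=\phi_1(g)^{-1}$ and $\phi_1(g\circ f)=\phi_1(g)\circ\phi_1(f)$; applying the contravariant functor ${\rm M}_{\Bbbk}(-)$ turns these into identities relating ${\rm M}_{\Bbbk}(\phi_1)$ with the pullbacks $\iota^{*},{\rm M}_{\Bbbk}(\Sf{s}),{\rm M}_{\Bbbk}(\Sf{t})$ on total algebras. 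On the other side, Proposition \ref{prop:zeta} translates every Hopf structure map into the corresponding groupoid operation after applying $\zeta$: item \eqref{221} gives $\iota^{*}\circ\zeta=\varepsilon$; the formula \eqref{Eq:zeta} together with $\varrho^{\Ii}_g=id_{\Bbbk}$ yields $\zeta\circ\Sf{s}={\rm M}_{\Bbbk}(\Sf{s})$ and $\zeta\circ\Sf{t}={\rm M}_{\Bbbk}(\Sf{t})$; item \eqref{222} encodes the antipode as the arrow inverse; and item \eqref{223} encodes the comultiplication as composition of arrows. Combining these with the commuting $\zeta$-square and the injectivity of $\zeta_{\gG}$, each compatibility reduces to one of the functoriality identities for $\phi_1$ above; for instance $\varepsilon_{\gG}\circ\mathscr{R}_{\Bbbk}(\phi)_1=\iota_{\gG}^{*}\circ{\rm M}_{\Bbbk}(\phi_1)\circ\zeta_{\hH}={\rm M}_{\Bbbk}(\phi_0)\circ\iota_{\hH}^{*}\circ\zeta_{\hH}={\rm M}_{\Bbbk}(\phi_0)\circ\varepsilon_{\hH}$, and similarly for source, target, antipode and comultiplication.

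Finally, contravariant functoriality is immediate: both $\mathscr{R}_{\Bbbk}(id_{\gG})=id$ and $\mathscr{R}_{\Bbbk}(\psi\circ\phi)=\mathscr{R}_{\Bbbk}(\phi)\circ\mathscr{R}_{\Bbbk}(\psi)$ follow by stacking $\zeta$-squares, using that ${\rm M}_{\Bbbk}(-)$ is a contravariant functor and cancelling the relevant injective $\zeta$ on the left. I expect the only genuinely delicate step to be the comultiplication compatibility, that is, checking through item \eqref{223} that the $\zeta$-image of $\Delta$ matches the pullback of composition under $\phi_1$; this is where the cocycle identity \eqref{Eq:cocycle} for $\B{\Rr}(\phi)\varrho^{\Ff}$ and the behaviour of the global dual basis under base change (the horizontal commutativity in Lemma \ref{lema:cuadro}) must be used with care, though all of it reduces to bookkeeping once the matrix coefficient description is in hand.
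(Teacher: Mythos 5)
Your proposal is correct, but it takes a genuinely different route from the paper. The paper constructs $\mathscr{R}_{\Bbbk}(\phi)$ abstractly: using Lemma \ref{lema:cuadro} it produces, for each $\hH$-representation $\varrho^{\Ff}$, a map $\theta_{\Ff}\colon \omega^{\hH}(\varrho^{\Ff})^*\tensor{\Bbbk}\omega^{\hH}(\varrho^{\Ff})\to\Repf{G}$ (essentially $\bara{\varphi\tensor{\tend{\Ff}}p}\mapsto\bara{(\varphi\circ\phi_0)\tensor{\tend{\Ff_{\phi}}}(p\circ\phi_0)}$), checks that $(\Repf{G},\theta_{\Ff})$ is a dinatural transformation, and invokes the universal (coend) property of $\Repf{H}$; the commutativity of the $\zeta$-square is then a consequence rather than the definition. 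You instead realize everything inside the total algebras via the injective maps $\zeta$, prove the containment ${\rm M}_{\Bbbk}(\phi_1)(\vV(\hH))\subseteq\vV(\gG)$ by the matrix-coefficient computation (which is correct: the pullback of the coefficients of $\varrho^{\Ff}$ along $\phi_1$ are exactly the coefficients of $\B{\Rr}(\phi)\varrho^{\Ff}$), and define $\mathscr{R}_{\Bbbk}(\phi)_1=\zeta_{\gG}^{-1}\circ{\rm M}_{\Bbbk}(\phi_1)\circ\zeta_{\hH}$. Your approach buys the commutative square and contravariant functoriality for free and makes the source, target, counit and antipode compatibilities one-line consequences of the functoriality identities for $\phi_1$; the paper's approach buys the Hopf-algebroid property of $\mathscr{R}_{\Bbbk}(\phi)$ wholesale from the universal property, at the cost of leaving the square to be checked afterwards. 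One caveat on your "delicate step": deducing the comultiplication compatibility from item (3) of Proposition \ref{prop:zeta} tacitly requires that the assignment $F_1\tensor{B}F_2\mapsto\big[(g,f)\mapsto\zeta(F_1)(g)\zeta(F_2)(f)\big]$ be injective on $\Repf{H}\tensor{\algb{\hH}}\Repf{H}$, which neither you nor the paper establishes. This is easily avoided: your definition of $\mathscr{R}_{\Bbbk}(\phi)_1$ evaluates on generators as $\bara{\varphi\tensor{\tend{\Ff}}p}\mapsto\bara{(\varphi\circ\phi_0)\tensor{\tend{\Ff_{\phi}}}(p\circ\phi_0)}$, and since the global dual basis $\{s_i,s_i^*\}$ of $\Gama{\Ff}$ pulls back to the dual basis $\{s_i\circ\phi_0,s_i^*\circ\phi_0\}$ of $\Gama{\phi_0^*\Ff}=\Gama{\Ff}\tensor{\algb{\hH}}\algb{\gG}$ (Lemma \ref{lema:cuadro}), the explicit formula for $\Delta$ in terms of dual bases gives $\Delta_{\gG}\circ\mathscr{R}_{\Bbbk}(\phi)_1=(\mathscr{R}_{\Bbbk}(\phi)_1\tensor{}\mathscr{R}_{\Bbbk}(\phi)_1)\circ\Delta_{\hH}$ directly, with no injectivity needed. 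With that substitution your argument is complete.
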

\begin{proof} The first part of this proof is in fact a sketch of the proof of the general statement given in  Lemma \ref{lema:LK}, see Appendix.

For simplicity we denote here the functor $\B{\Rr}(\phi)$ by $(-){\phi}$. So let $\varrho^{\Ff}$ be any $\hH$-representation, using Lemma \ref{lema:cuadro}, we can show that there is a morphism of  $\algb{\hH}$-bimodules:
$$
\xymatrix{ \omega^{\hH}(\varrho^{\Ff})^*\tensor{\Bbbk}\omega^{\hH}(\varrho^{\Ff}) \ar@{-->}_-{\theta_{\Ff}}[rrd]  \ar@{->}_-{}[rr] & & 
\omega^{\gG}(\varrho^{\Ff_{\phi}})^*\tensor{\Bbbk}\omega^{\gG}(\varrho^{\Ff_{\phi}}) \ar@{->}_-{}[d] \\ & & \Repf{G} }
$$ where the horizontal arrow is clear and the vertical arrow is one of the canonical morphisms of $\algb{\gG}$-bimodules defining the universal object $\Repf{G}$. 

It turns out that the pair $(\Repf{G},\theta_{\Ff})_{\varrho^{\Ff} \in \, \Rep{H}}$ is a dinatural transformation for the bi-functor $\omega^{\hH}(-)^*\tensor{\Bbbk}\omega^{\hH}(-)$ in $\algb{\hH}$-bimodules. Therefore, by the universal property of $\Repf{H}$, there is a morphism of $\algb{\hH}$-bimodules $\mathscr{R}_{\Bbbk}(\phi): \Repf{H} \to \Repf{G}$ which in fact is a morphism of Hopf algebroids. The remainder axioms on $\mathscr{R}_{\Bbbk}$ to be a functor, are easily checked. The commutativity of the stated diagram is directly obtained from the basic properties of the restriction  functor $\B{\Rr}$ given in Lemma  \ref{lema:indfunct}.
\end{proof}

Before given examples concerning Hopf algebroids of representative functions, let us first recall the notion of \emph{induced groupoid} which we will use in the sequel.

\begin{example}\label{exam:u}
Let $\hH$ be a groupoid and $u: P \to \hH_0$ any map. Consider the following set
$$ P_1:=P\,{}_{u}\times_{\Sf{s}}\,\hH_1\, {}_{\Sf{t}}\times_{u} \,P\,\,=\LR{(p,g,q) \in P\times \hH_1 \times P|\,\, u(p)=\Sf{s}(g), \, u(q)=\Sf{t}(g)}.$$
Clearly $(P_1,P)$ has a structure of groupoid\footnote{If $\hH$ is a groupoid with only one object, that is, a group, and $P$ is any set then one recovers the groupoid defined in Example \ref{exam:FunctTransGrp}.}and $(pr_2,u): (P_1,P) \to (\hH_1,\hH_0)$ is a morphism of groupoids, where $pr_2: P_1 \to \hH_1$ is the second projection. This is the \emph{induced groupoid} by $u$ and denoted by $\hH_u$. For instance, if we  fix a  point $x \in \hH_0$, we  can then  take $P=\{x\}$ and show that the groupoid $(\{x\}_1,\{x\})$ is in fact a group which coincides with the isotropy group of $\hH$ at $x$.

Furthermore, any morphism $\phi: \gG \to \hH$ of groupoids can be lifted to the induced groupoid by $\phi_0$. That is, we have a morphism of groupoids $\psi: \gG \to \gG_{\phi_{\scriptscriptstyle{0}}}$, where $\psi_0=\phi_0$ and $\psi_1: \gG_1 \to \gG_0\,{}_{\phi_0}\times_{\Sf{s}}\, \hH_1\,{}_{\Sf{t}}\times_{\phi_0}\, \gG_0$ sends $g \mapsto (\Sf{s}(g), \phi_1(g), \Sf{t}(g))$.
\end{example}

\begin{examples}\label{exam:Gg} Since groups are  in a canonical  way related with groupoids, it is natural to try to see, using the functor $\mathscr{R}_{\Bbbk}$,   how this relation behave at the level of  Hopf algebras and Hopf algebroids (here Hopf algebra are consider as Hopf algebroid with source equal target).  
\begin{enumerate}[(1)]
\item Let $\gG$ be a  groupoid and consider the (full) sub-groupoid $\gG^{i} \hookrightarrow \gG$ consisting of arrows $\gG^i{}_1=\{ g \in \gG_1|\, \Sf{s}(g)=\Sf{t}(g)\}$ and objects $\gG^i{}_0=\gG_0$ (i.e., the \emph{isotropy groupoid of $\gG$}). Then we have a surjective map $\Repf{G} \twoheadrightarrow \mathscr{R}_{\Bbbk}(\gG^i)$ of Hopf $\algb{\gG}$-algebroids, where $\Repf{\gG^i}$ is  the  quotient Hopf $\algb{\gG}$-algebra of $\Repf{G}$ by the ideal generated by the set $\{b \in \algb{\gG}|\, \Sf{s}(b)-\Sf{t}(b)\}$. 
\item Let $\gG$ be a  groupoid and fix a point $x \in \gG_0$. Consider the isotropy group at $x$, that is, the group $G_x=\{g \in \gG_1|\, \Sf{s}(g)=\Sf{t}(g)=x\}$. This point gives a rise to the $\Bbbk$-algebra map $ev_x: \algb{\gG} \to \Bbbk$ sending $a \mapsto a(x)$. We denote by $\Bbbk_x$ the $\algb{\gG}$-algebra $\Bbbk$ via the extension $ev_x$. In this way the monorphism of groupoids $\Sf{x}=(i_x,x): (G_x,\{x\}) \hookrightarrow \gG$, where we denote $x:\{x \} \hookrightarrow \gG_0$, leads to a morphism of Hopf algebroid  
\begin{equation}\label{Eq:isotropy}
(ev_x,\mathscr{R}_{\Bbbk}(\Sf{x})): (\algb{\gG},\Repf{G}) \longrightarrow (\Bbbk_x,\mathscr{R}_{\Bbbk}(G_x)).
\end{equation} 
This Hopf $\Bbbk$-algebra is called \emph{the isotropy Hopf algebra} at the point $ev_x$ (which in fact corresponds to the isotropy group at $x$).

On the other hand, it is easily checked that  the base extension Hopf algebroid $(\Bbbk_x, \Bbbk_x\tensor{\algb{\gG}}\Repf{G}\tensor{\algb{\gG}}\Bbbk_x)$ coincides with the Hopf algebra $(\Bbbk_x, \mathscr{R}_{\Bbbk}(\gG_{x}))$ of the induced groupoid $\gG_x$ by $x$.
Therefore,  it  is isomorphic to the isotropy Hopf $\Bbbk$-algebra $\mathscr{R}_{\Bbbk}(G_{x})$, since $\gG_x$ and  $G_x$ are isomorphic as groups, see Example \ref{exam:u}.

The geometrically transitive property of the Hopf algebroid $(\algb{\gG},\Repf{G})$ could be then interpreted by saying that, for any point $x \in \gG_0$, the extended map  of Hopf $\Bbbk$-algebras  \begin{equation}\label{Eq:extension}
\Bbbk_x\tensor{\algb{\gG}}\Repf{G}\tensor{\algb{\gG}}\Bbbk_x \longrightarrow \mathscr{R}_{\Bbbk}(G_x)  
\end{equation}
is an isomorphism and that the category of $\Repf{G}$-comodules is  equivalent (as symmetric monoidal category) to the category of $\mathscr{R}_{\Bbbk}(G_x)$-comodules, as was expound in  \cite{Deligne:1990}.

\item Let $X$ be an non empty set and consider its  unit groupoid $\Uu(X)$ together with its  groupoid of pairs  $\Vv(X)$, see Example \ref{exmp:UV}. The diagonal map clearly gives a monomorphism of groupoids $\Uu(X) \hookrightarrow \Vv(X)$. Then we have a morphism of Hopf ${\rm M}_{\Bbbk}(X)$-algebroids $\mathscr{R}_{\Bbbk}(\Vv(X)) \to   \mathscr{R}_{\Bbbk}(\Uu(X))$ which in fact can be constructed elementarywise using the description offered in  Example \ref{exmp:UV}. Although, $\mathscr{R}_{\Bbbk}(\Vv(X))$  and $\mathscr{R}_{\Bbbk}(\Uu(X))$ are not isomorphic  part of their structures behave similar in the sense that they have the same class of isotropy Hopf algebras.
\item Consider an  action groupoid $\gG$ with group $G$ acting on non empty set $X$.  As we have seen in  Example \ref{exam:actionI}, the projection $G\times X \to G$ is a morphism of groupoids which then leads to a morphism of Hopf algebroids $(\Bbbk, \mathscr{R}_{\Bbbk}(G)) \longrightarrow ({\rm M}_{\Bbbk}(X), \mathscr{R}_{\Bbbk}(G\times X))$. At this level of generality, one can not expect to   have an isomorphism $\mathscr{R}_{\Bbbk}(G\times X)\cong {\rm M}_{\Bbbk}(X)\tensor{\Bbbk}\mathscr{R}_{\Bbbk}(G)$ of Hopf ${\rm M}_{\Bbbk}(X)$-algebbroid, since in general there is no way to convert ${\rm M}_{\Bbbk}(X)$ into  $\mathscr{R}_{\Bbbk}(G)$-comodule algebra. 
This happens perhaps  only when   one is restricted to a special class of representative functions  and assuming a more rich structure on both  $G$ and $X$, see the forthcoming examples.
\end{enumerate}
\end{examples}

\begin{example}\label{exam:algGroups}
Assume  that $\gG$ is an algebraic affine action $\Bbbk$-groupoid with $\Bbbk$ an infinite field. Precisely,  consider $\gG_1=G \times X$, where $G$ is an algebraic affine $\Bbbk$-group acting on an affine algebraic $\Bbbk$-set  $X=\gG_0$  and assume that the action is a morphism of algebraic $\Bbbk$-sets.   Then the algebra $\pP(G\times X)$ of polynomial representative functions on $G \times X$ can be endowed within a Hopf $\pP(X)$-algebroid structure which   splits into a tensor product  $\pP(X)\tensor{\Bbbk}\pP(G)$  (see \cite{Hochschild:book} for basic definitions and notations). This is of course  an example of a topological action groupoid $\gG$ where we have a chain of homomorphisms of Hopf algebroids $$\big(\pP(X), \pP(G\times X)\big) \subseteq \big(\calgb{\gG},\tRepf{G}\big) \subseteq \big(\algb{\gG},\Repf{G}\big).$$

In this situation there is no a clearer  way to assert that $\pP(G\times X)$ is geometrically transitive Hopf  algebroid. However, if we assume that the $G$-action is free and transitive, then $X \to X/G$ is a principal $G$-bundle and we have an isomorphism $\pP(X)\tensor{\Bbbk}\pP(X) \to \pP(G\times X)$ of Hopf algebroids, From which we deduce that  $\pP(G\times X)$ is geometrically transitive, since so is $\pP(X)\tensor{\Bbbk}\pP(X)$. 

On the other hand, the geometric transitive property of the Hopf algebroid $\pP(G\times X)$ could be related   as  follows to the Equivariant Serre Problem \cite{Masuda/Petrie:1995}.  Assume that $X$ is the affine $n$-space and that $\pP(G\times X)$ is   geometrically transitive. Then any  object in the category $\frcomod{\pP(G\times X)}$  is finitely generated and projective $\pP(X)$-module and thus free by Quillen-Suslin's theorem. In particular, this means  that any algebraic $G$-equivariant bundle is trivial. 

There is also another situation where  the  Equivariant Serre Problem for general affine algebraic $\Bbbk$-set $X$, have a positive answer. Specifically, since we know that there is a Hopf algebroid morphism $(\eta_0,\eta_1): (\Bbbk, \pP(G)) \to (\pP(X), \pP(G\times X))$, we can associate to it the induction functor $\eta^*: \rcomod{\pP(G)} \to \rcomod{\pP(G \times X)}$ which have the ad-induction functor $\eta_{*}$ as right adjoint. So if we assume that the counit of this adjunction is a natural isomorphism, then any object $M$ in the category  $\frcomod{\pP(G\times X)}$ is isomorphic to $ \eta_{*}(M)\tensor{\Bbbk}\pP(X)\cong  M$, and thus it is a free $\pP(X)$-module, which in particular means  that any algebraic $G$-equivariant bundle is trivial.
\end{example}

\begin{example}\label{exam:TransGpd}
Let $\gG$ be a transitive groupoid, that is, the map $(\Sf{s},\Sf{t}): \gG_1 \to \gG_0 \times \gG_0$ is surjective. Fix an object $x \in \gG_0$ and choose a family of arrows $\{\tau_y\}_{y \in \gG_0}\, \subseteq \, \gG_1$ where each $\tau_y \in \Sf{t}^{-1}(\{x\})$ and $\Sf{s}(\tau_y)=y$, for $y \neq x$ and $\tau_x=\iota(x)$, for $y=x$ . It is well known (see for instance \cite{Brown:1987}), that there is a (non canonical) isomorphism of groupoids
$$
\xymatrix{\phi^x: (\gG_1, \gG_0) \ar@{->}^-{\cong}[r] &    \big(\gG_0 \times G_x \times \gG_0 , \gG_0 \big) &  \lr{ g \longmapsto \big(\Sf{s}(g), \tau_{\Sf{t}(g)} g\tau_{\Sf{s}(g)}{}^{-1}, \Sf{t}(g) \big), id_{\gG_0} }}
$$
where the right-hand side groupoid is the one defined as in Example \ref{exam:FunctTransGrp}. By applying the functor  $\mathscr{R}_{\Bbbk}$ and the isomorphism established in Example \ref{exam:FunctTransGrp},   we obtain then a chain of isomorphism of Hopf algebroids
$$
(\algb{\gG}, \Repf{G}) \, \cong \,  \big(\algb{\gG},\mathscr{R}_{\Bbbk}(\gG_0 \times G_x \times \gG_0 ) \big) \, \cong \,  \big(\algb{\gG}, \algb{\gG}\tensor{\Bbbk}\mathscr{R}_{\Bbbk}(G_x )\tensor{\Bbbk}\algb{\gG} \big)
$$
whose composition  turns to be the extended Hopf $\algb{\gG}$-algebroids morphism of the Hopf algebra isomorphism given in \eqref{Eq:extension}.

On the other hand, as was shown in Example  \ref{exam:Gg}(2), for a general groupoid $\gG$ the geometrically transitive property of its Hopf algebroid $(\algb{\gG}, \Repf{G}) $ says that,  up to isomorphisms of Hopf algebras, we only have one type of  isotropy Hopf algebras. However, there is no way to give an explicit description of such isomorphisms.  In difference when $\gG$ is  transitive,  these isomorphisms are precisely given by conjugating the isotropy groups of $\gG$, before applying the functor $\mathscr{R}_{\Bbbk}$.
\end{example}

\subsection{The character functor $\mathscr{X}_{\Bbbk}: \CHAlgd_{\Bbbk}^{\scriptscriptstyle{op}} \to \Grpd$.}\label{ssec:X}
In this section we recall the definition of the character functor.
All Hopf algebroids are considered over the ground field $\Bbbk$. We  are implicitly assuming that the base $\Bbbk$-algebra $R$ of any Hopf algebroid have the property that $\Alg{R}{\Bbbk} \neq \emptyset$.

Let $(R,\Hh)$ be  a Hopf algebroid. As in the case of Hopf algebra over fields, it is well known that  we can define the \emph{characters groupoid} of $(R,\Hh)$  as the groupoid 
$$
\chara{(R,\Hh)}:\, \xymatrix@C=50pt{\Alg{\Hh}{\Bbbk} \ar@<1.2ex>@{->}|-{\scriptstyle{\Sf{s}^*}}[r] \ar@<-1.2ex>@{->}|-{\scriptstyle{\Sf{t}^*}}[r] & \ar@{->}|-{ \scriptstyle{\varepsilon^*} }[l] \Alg{R}{\Bbbk}, }
$$ 
by dualizing the source,  target and the counit of $(R,\Hh)$. The rest of the axioms defining the underlying category of this groupoid are easily verified, whence we have identified $\Alg{(\Hh\tensor{R}\Hh)}{\Bbbk}$ with $\Alg{\Hh}{\Bbbk}{ }_{\Sf{s}^*}\times _{\Sf{t}^*} \Alg{\Hh}{\Bbbk}$ using $\Hh$ as an $R$-bimodule with $\Sf{t}$ acting on the left and $\Sf{s}$ on the right. The inverse map is given by $\sS^*: \Alg{\Hh}{\Bbbk} \to \Alg{\Hh}{\Bbbk}$, where $\sS: \Hh \to \Hh$ is the antipode of $(R,\Hh)$. 

Now, let $\alpha=(\alpha_0,\alpha_1): (R,\Hh) \to (S,\Kk)$ be a morphism of Hopf algebroids. Then clearly we obtain a morphism between character groupoids:
$$
\xymatrix@C=50pt{ \chara{(S,\Kk)}: \ar@{-->}_-{\chara{(\alpha)}}[d] & \Alg{\Kk}{\Bbbk} \ar@{->}_-{\alpha_1^*}[d]  \ar@<1.2ex>@{->}|-{\scriptstyle{\Sf{s}^*}}[r] \ar@<-1.2ex>@{->}|-{\scriptstyle{\Sf{t}^*}}[r] & \ar@{->}|-{ \scriptstyle{\varepsilon^*} }[l] \Alg{S}{\Bbbk} \ar@{->}^-{\alpha_0^*}[d]  \\ \chara{(R,\Hh)}: &
\Alg{\Hh}{\Bbbk} \ar@<1.2ex>@{->}|-{\scriptstyle{\Sf{s}^*}}[r] \ar@<-1.2ex>@{->}|-{\scriptstyle{\Sf{t}^*}}[r] & \ar@{->}|-{ \scriptstyle{\varepsilon^*} }[l] \Alg{R}{\Bbbk}. }
$$ 

\begin{lemma}\label{lema:X}
The  characters groupoid   $$\chara: \CHAlgd_{\Bbbk} \longrightarrow \Grpd$$  establishes a well defined contravariant functor. 
\end{lemma}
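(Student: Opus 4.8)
The plan is to obtain $\chara$ as a mere application, entry by entry, of the contravariant functor $R\mapsto\Alg{R}{\Bbbk}$ of $\Bbbk$-points to the structure maps of a Hopf algebroid. The guiding principle is that a commutative Hopf algebroid $(R,\Hh)$ is precisely a cogroupoid object in the category of commutative $\Bbbk$-algebras, so that evaluating it at $\Bbbk$ via $\Alg{-}{\Bbbk}$ produces a groupoid object in sets, i.e.\ a discrete groupoid. Thus the whole statement reduces to checking that $\Alg{-}{\Bbbk}$ transforms the cogroupoid axioms into the groupoid axioms and the morphisms accordingly.

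First I would recall why $\chara(R,\Hh)$ is a groupoid, which is the claim asserted as ``easily verified'' just before the statement. The essential point is that $\Alg{-}{\Bbbk}$ carries coproducts to products, and more precisely sends the cocartesian square defining the relative tensor product $\Hh\tensor{R}\Hh$ (the pushout of $\Hh \xleftarrow{\Sf{s}} R \xrightarrow{\Sf{t}} \Hh$) to the cartesian square
\[
\Alg{\Hh\tensor{R}\Hh}{\Bbbk} \;\cong\; \Alg{\Hh}{\Bbbk}\,{}_{\Sf{s}^*}\!\times_{\Sf{t}^*}\,\Alg{\Hh}{\Bbbk},
\]
the set of composable pairs of arrows. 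Under this identification $\Alg{\Delta}{\Bbbk}$ becomes the composition law; the coassociativity of $\Delta$ then dualizes to associativity, the two counit identities dualize to the unit laws and to $\iota=\varepsilon^*$, and the antipode identities for $\sS$ dualize to the fact that $\sS^*$ is a two-sided inverse. This is a direct term-by-term translation requiring no new idea.

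Next, for a morphism $\alpha=(\alpha_0,\alpha_1):(R,\Hh)\to(S,\Kk)$ of Hopf algebroids I would set $\chara(\alpha):=(\alpha_1^*,\alpha_0^*)$ and check it is a morphism of groupoids $\chara(S,\Kk)\to\chara(R,\Hh)$. The compatibility of $\alpha$ with source and target, namely $\alpha_1\Sf{s}=\Sf{s}\alpha_0$ and $\alpha_1\Sf{t}=\Sf{t}\alpha_0$, dualizes at once to $\Sf{s}^*\alpha_1^*=\alpha_0^*\Sf{s}^*$ and $\Sf{t}^*\alpha_1^*=\alpha_0^*\Sf{t}^*$, which is exactly the commutativity of the square displayed before the statement; similarly, compatibility with $\Delta$, $\varepsilon$ and $\sS$ shows that $\alpha_1^*$ preserves composition, identities and inverses, where for composition one again uses the fiber-product identification to see that $\alpha_1^*\times\alpha_1^*$ restricts to composable pairs. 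Functoriality is then immediate from that of $\Alg{-}{\Bbbk}$: $\chara(\id)=\id$ because $\Alg{\id}{\Bbbk}=\id$, and $\chara(\beta\circ\alpha)=\chara(\alpha)\circ\chara(\beta)$ because $\Alg{-}{\Bbbk}$ reverses composition, so $\chara$ is contravariant. The only point that genuinely deserves care --- and the one I would single out as the crux --- is the identification of $\Alg{\Hh\tensor{R}\Hh}{\Bbbk}$ with the set of composable pairs; once that cocartesian-to-cartesian passage is in place, every remaining verification is the mechanical dualization of relations already assumed for $(R,\Hh)$ and for $\alpha$.
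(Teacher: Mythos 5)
Your proposal is correct and follows exactly the route the paper intends: the paper's proof is literally ``Straightforward,'' relying on the construction given just before the lemma, where the groupoid axioms are obtained by dualizing the Hopf algebroid structure maps via $\Alg{-}{\Bbbk}$ and the composable pairs arise from the identification of $\Alg{\Hh\tensor{R}\Hh}{\Bbbk}$ with the fiber product. You have simply made explicit the cocartesian-to-cartesian passage and the term-by-term dualization that the author leaves to the reader.
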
\begin{proof} Straightforward. \end{proof}

\subsection{The unit and counit of the duality.}\label{ssec:UC}
The restriction of the characters groupoid functor to the full subcategory of geometrically transitive Hopf algebroids $\GTCHAlgd_{\Bbbk}$ will be also denoted by $\chara$. 
Let $\gG$ be an object in $\Grpd$ and consider the Hopf algebroid $(\algb{\gG}, \Repf{G})$ of representative functions on $\gG$ which by Proposition \ref{prop:GT} we know that it is  an object in $\GTCHAlgd_{\Bbbk}$. 

Let us define the following two maps $ \B{\Theta}_{\gG_0}=ev: \gG_0 \to \Alg{\algb{\gG}}{\Bbbk}$ by evaluating on each point of $\gG_0$ and 
$\B{\Theta}_{\gG_1}: \gG_1 \to \Alg{\Repf{G}}{\Bbbk}$ by applying the map $\zeta$ of Proposition \ref{prop:zeta}. Explicitly, using the notation of  Proposition \ref{prop:zeta}, $\B{\Theta}_{\gG_1}$ is defined by sending 
$$ \gG_1 \ni g \longmapsto \Lr{\bara{\varphi\tensor{T_{\Ee}}p } \longmapsto \varphi(\Sf{t}(g))\lr{
\varrho_g^{\Ee}\big(p(\Sf{s}(g))\big)}} \, \in \Alg{\Repf{G}}{\Bbbk}.$$

\begin{lemma}\label{lema:unit}
Keep the above notations. We have
\begin{enumerate}[(i)]
\item For each groupoid $\gG$, the pair $(\B{\Theta}_{\gG_0}, \B{\Theta}_{\gG_1}) : \gG \to \mathscr{X}_{\Bbbk} \circ \mathscr{R}_{\Bbbk}(\gG)$  is a morphism of groupoids. 
\item $\B{\Theta}_{-}: id_{\Grpd} \to \mathscr{X}_{\Bbbk} \circ \mathscr{R}_{\Bbbk}$ is a  natural transformation. 
\end{enumerate}
\end{lemma}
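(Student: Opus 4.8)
The plan is to exploit the fact that the arrow component $\B{\Theta}_{\gG_1}$ is built directly from the algebra map $\zeta=\zeta_{\gG}$ of Proposition \ref{prop:zeta}: by construction $\B{\Theta}_{\gG_1}(g)=ev_g\circ\zeta$, where $ev_g:\algt{\gG}\to\Bbbk$ is evaluation at $g$, so that $\B{\Theta}_{\gG_1}(g)(F)=\zeta(F)(g)$ for every $F\in\Repf{G}$. As $\zeta$ is an algebra map, each $\B{\Theta}_{\gG_1}(g)$ is indeed a character of $\Repf{G}$, and every compatibility required below will be obtained by transporting the corresponding property of $\zeta$ recorded in Proposition \ref{prop:zeta}.

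For (i) I would verify the structural identities of a functor between groupoids. \emph{Source} and \emph{target}: recall from the explicit structure of $(B,\Repf{G})$ that $\Sf{s}(a)=\bara{1_B\tensor{\tend{\Ii}}a}$ and $\Sf{t}(a)=\bara{a\tensor{\tend{\Ii}}1_B}$ with $\varrho^{\Ii}_g=id_{\Bbbk}$; evaluating gives $\B{\Theta}_{\gG_1}(g)\circ\Sf{s}=ev_{\Sf{s}(g)}$ and $\B{\Theta}_{\gG_1}(g)\circ\Sf{t}=ev_{\Sf{t}(g)}$, i.e. $\Sf{s}^*\big(\B{\Theta}_{\gG_1}(g)\big)=\B{\Theta}_{\gG_0}(\Sf{s}(g))$ and $\Sf{t}^*\big(\B{\Theta}_{\gG_1}(g)\big)=\B{\Theta}_{\gG_0}(\Sf{t}(g))$. \emph{Identities}: since the unit map of $\chara(B,\Repf{G})$ is $\varepsilon^*(\chi)=\chi\circ\varepsilon$, item \eqref{221} of Proposition \ref{prop:zeta} yields $\varepsilon^*\big(\B{\Theta}_{\gG_0}(x)\big)(F)=\varepsilon(F)(x)=\zeta(F)(\iota(x))=\B{\Theta}_{\gG_1}(\iota(x))(F)$ for every $x\in\gG_0$. \emph{Composition}: whenever $\Sf{s}(g)=\Sf{t}(f)$, the source/target step shows that $\B{\Theta}_{\gG_1}(g)$ and $\B{\Theta}_{\gG_1}(f)$ are composable in $\chara(B,\Repf{G})$, and their composite is $\big(\B{\Theta}_{\gG_1}(g)\tensor{}\B{\Theta}_{\gG_1}(f)\big)\circ\Delta$, sending an element $F$ with $\Delta(F)=F_1\tensor{B}F_2$ to $\zeta(F_1)(g)\,\zeta(F_2)(f)$; item \eqref{223} of Proposition \ref{prop:zeta} identifies this with $\zeta(F)(g\circ f)=\B{\Theta}_{\gG_1}(g\circ f)(F)$. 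Hence $(\B{\Theta}_{\gG_0},\B{\Theta}_{\gG_1})$ is a functor, and therefore a morphism of groupoids; its compatibility with inverses is automatic and mirrors item \eqref{222} of Proposition \ref{prop:zeta}.

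For (ii) let $\phi:\gG\to\hH$ be a morphism of groupoids. Unwinding the covariant composite $\chara\circ\mathscr{R}_{\Bbbk}$ on $\phi$, it acts on objects by $\chi\mapsto\chi\circ\mathrm{M}_{\Bbbk}(\phi_0)$ and on arrows by $\chi\mapsto\chi\circ\mathscr{R}_{\Bbbk}(\phi)$. On objects the naturality square is the immediate identity $ev_x\circ\mathrm{M}_{\Bbbk}(\phi_0)=ev_{\phi_0(x)}$. On arrows I would invoke the commutative diagram of Proposition \ref{prop:R}, namely $\zeta_{\gG}\circ\mathscr{R}_{\Bbbk}(\phi)=\mathrm{M}_{\Bbbk}(\phi_1)\circ\zeta_{\hH}$: for $g\in\gG_1$ and $F\in\Repf{H}$,
\[
\big(\B{\Theta}_{\gG_1}(g)\circ\mathscr{R}_{\Bbbk}(\phi)\big)(F)
=\zeta_{\gG}\big(\mathscr{R}_{\Bbbk}(\phi)(F)\big)(g)
=\big(\zeta_{\hH}(F)\circ\phi_1\big)(g)
=\zeta_{\hH}(F)(\phi_1(g))
=\B{\Theta}_{\hH_1}(\phi_1(g))(F),
\]
which is precisely the commutativity of the naturality square. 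Together with the object case this shows that $\B{\Theta}_{-}:id_{\Grpd}\to\chara\circ\mathscr{R}_{\Bbbk}$ is natural.

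All of these verifications are routine once the factorization $\B{\Theta}_{\gG_1}(g)=ev_g\circ\zeta$ is noted; the single delicate point, where I expect the main obstacle to sit, is the composition step. There one must reconcile the \emph{abstract} convolution product of characters --- defined through the comultiplication $\Delta$ together with the identification $\Alg{(\Repf{G}\tensor{B}\Repf{G})}{\Bbbk}\cong\Alg{\Repf{G}}{\Bbbk}\,{ }_{\Sf{s}^*}\times_{\Sf{t}^*}\Alg{\Repf{G}}{\Bbbk}$ --- with the \emph{concrete} composition law of $\gG$, taking care of which $R$-action is used in the balanced tensor product and of the order in which arrows are composed. This is exactly the bookkeeping encapsulated in item \eqref{223} of Proposition \ref{prop:zeta}, so the obstacle is dissolved by appealing to that item rather than by any fresh computation.
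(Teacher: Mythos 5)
Your proposal is correct and follows essentially the same route as the paper: the arrow map is read off as $\B{\Theta}_{\gG_1}(g)=ev_g\circ\zeta$, source/target compatibility is a direct check, identities and inverses come from items (1) and (2) of Proposition \ref{prop:zeta}, composition is exactly item (3) matched against the convolution of characters via $\Delta$, and naturality on arrows is the commutative square of Proposition \ref{prop:R}. The only cosmetic difference is that the paper re-derives the composition identity by an explicit dual-basis computation where you simply cite item (3), which is legitimate since that item already contains the computation.
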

\begin{proof}
$(i)$.  The compatibility of $\B{\Theta}_{\gG}$ with  both sources and targets of the two groupoids  is easily checked.  To verify the compatibility of $\B{\Theta}_{\gG}$ with the identity and the inverse maps, we use the first two items stated in 
Proposition \ref{prop:zeta}.
Let us then check that $\B{\Theta}_{\gG}$ is  compatible with the compositions.
Take $g, f \in \gG_1$ with $\Sf{s}(g)=\Sf{t}(f)$, and fix an element of the form $\bara{\varphi\tensor{\tend{\Ee}}p}$ in $\Repf{G}$ given by the $\gG$-representation $\varrho^{\Ee}$ with rank $n$ and a global dual basis  $\{s_i,s_i^*\}_i^n$ of the  sections of its underlying bundle. 
Since we know that $$\B{\Theta}_{\gG_1}(h)\big(\bara{\varphi\tensor{\tend{\Ee}}p}\big)\,=\, \zeta\big(\bara{\varphi\tensor{\tend{\Ee}}p}\big)(h), \quad \forall h \, \in \gG_1,$$ we have, by applying the equality of item (3) in Proposition \ref{prop:zeta}, that 
\begin{eqnarray*}
\B{\Theta}_{\gG_1}(gf)\big(\bara{\varphi\tensor{\tend{\Ee}}p}\big)&=&  
\zeta\big(\bara{\varphi\tensor{\tend{\Ee}}p}\big)(gf) \\ &=&
\zeta\Big(\big(\bara{\varphi\tensor{\tend{\Ee}}p}\big)_1\Big)(g)
\, \zeta\Big(\big(\bara{\varphi\tensor{\tend{\Ee}}p}\big)_2\Big)(f)  \\ &=& \sum_i  \zeta\big(\bara{\varphi\tensor{\tend{\Ee}}s_i} \big)(g) \,  \zeta\big(\bara{s_i^*\tensor{\tend{\Ee}}p} \big)(f)\\ &=&
\sum_i  \B{\Theta}_{\gG_1}(g)\big(\bara{\varphi\tensor{\tend{\Ee}}s_i} \big) \,  \B{\Theta}_{\gG_1}(f)\big(\bara{s_i^*\tensor{\tend{\Ee}}p} \big) \\ &=& \big(\B{\Theta}_{\gG_1}(g) \B{\Theta}_{\gG_1}(f)\big)\lr{\bara{\varphi\tensor{\tend{\Ee}}p}}
\end{eqnarray*}
where the last equality follows from the definition of the functor $\chi_{\Bbbk}$ and the comultiplication of $\Repf{G}$.
This shows that $\B{\Theta}_{\gG_1}(g) \B{\Theta}_{\gG_1}(f) = \B{\Theta}_{\gG_1}(gf)$ and finishes the proof of item $(i)$.

$(ii)$. Let $\phi:\gG \to \hH$ be  a morphism of groupoids. We need then to check the  following two equalities:
$$ \B{\Theta}_{\hH_0} \circ \phi_0\,=\, \big(\chara \circ \RepF{\phi}\big)_0  \circ \B{\Theta}_{\gG_0}, \quad   \big(\chara \circ \RepF{\phi}\big)_1  \circ \B{\Theta}_{\gG_1} \,=\,  \B{\Theta}_{\hH_1} \circ \phi_1.$$ The first equality is easily deduced from the definition of the involved maps. The second one  uses the commutative diagram of Proposition \ref{prop:R}. 
\end{proof}

Consider now  a geometrically transitive Hopf algebroid $(R, \Hh)$ and denote by $\fk{ev}: R \to \algb{\chara(R,\Hh)} ={\rm M}_{\Bbbk}(\Alg{R}{\Bbbk})$ the evaluation algebra map. We know from  \cite[Théorème 7.1, or Proposition 5.8]{Bruguieres:1994} that $\Hh$ can be reconstructed from  its category $\mathsf{comod}_{\Hh}$ of $\Hh$-comodules which are  finitely generated as $R$-modules. On the other hand, by \cite[Corollaire 3.9,  item (b) in page 114]{Deligne:1990}, this category consist in fact of those comodule which are finitely generated and projective of constant rank over $R$, since we are assuming that $\Alg{R}{\Bbbk}\neq \emptyset$.  Taking this observation into account, next we want to construct a functor from $\mathsf{comod}_{\Hh}$ to the category of representations of the characters groupoid of  $(R, \Hh)$. Let us denote by $\Uu_{\Hh}: \mathsf{comod}_{\Hh} \to \Sf{proj}(R)$ the forgetfull functor.

\begin{lemma}\label{lema:F}
Let $(R, \Hh)$ be a geometrically transitive Hopf algebroid. Then there is a functor $\fF: \mathsf{comod}_{\Hh} \to \REP{\chara(R,\Hh)}$ which turns commutative the following diagram
$$
\xymatrix@C=60pt{ \mathsf{comod}_{\Hh}  \ar@{->}^-{\fF}[rr] \ar@{->}_-{\Uu_{\Hh}}[d] & & \REP{\chara(R,\Hh)}  \ar@{->}^-{\omega}[d] \\ \Sf{proj}(R) \ar@{->}_-{\fk{ev}^*}[rr] & & \proj{\chara(R,\Hh)}}
$$
\end{lemma}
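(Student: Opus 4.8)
The plan is to build $\fF$ fibrewise, using base change along the characters of $R$ to produce the fibres and the $\Hh$-coaction to produce the groupoid action. Writing the coaction of a right comodule $(M,\rho_M)\in\frcomod{\Hh}$ in Sweedler form $\rho_M(m)=\sum m_{(0)}\tensor{R}m_{(1)}$, I would assign to each object $\sigma\in\Alg{R}{\Bbbk}$ of $\chara(R,\Hh)$ the space $E_\sigma:=M\tensor{R}\Bbbk_\sigma$, where $\Bbbk_\sigma$ is $\Bbbk$ viewed as a left $R$-module via $\sigma$, and put $\Ee:=\bigcup_{\sigma}E_\sigma$. For an arrow $\chi\in\Alg{\Hh}{\Bbbk}$, whose source is $\Sf{s}^*(\chi)=\chi\circ\Sf{s}$ and whose target is $\Sf{t}^*(\chi)=\chi\circ\Sf{t}$, I define
$$\varrho^{\Ee}_{\chi}\colon E_{\chi\Sf{s}}\longrightarrow E_{\chi\Sf{t}},\qquad m\tensor{R}1\longmapsto \sum m_{(0)}\tensor{R}\chi(m_{(1)}).$$
Then $\fF(M):=(\Ee,\varrho^{\Ee})$, and on a comodule morphism $f\colon M\to N$ I take $\fF(f)$ to be the family $\{f\tensor{R}\Bbbk_\sigma\}_{\sigma}$ of fibrewise base changes.

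The first checks are that each $\varrho^{\Ee}_{\chi}$ is well defined on the balanced tensor products and that $\fF(M)$ is genuinely a representation. Testing $mr\tensor{R}1=m\tensor{R}\sigma(r)$ in the source and using the right $R$-linearity of $\rho_M$ together with $\chi\circ\Sf{s}$ shows the source balancing is respected, while the left $R$-module structure of $\Hh$ through $\Sf{t}$ matches the target balancing through $\chi\circ\Sf{t}$; this is precisely where the asymmetric roles of $\Sf{s}$ and $\Sf{t}$ in the coring structure of $\Hh$ are used. The counit axiom then yields $\varrho^{\Ee}_{\varepsilon^*(\sigma)}=\id_{E_\sigma}$, and coassociativity of $\rho_M$, combined with the fact that composition in $\chara(R,\Hh)$ is obtained by dualizing $\Delta$, yields $\varrho^{\Ee}_{\chi'\chi}=\varrho^{\Ee}_{\chi'}\circ\varrho^{\Ee}_{\chi}$ for composable $\chi,\chi'$; in particular each $\varrho^{\Ee}_{\chi}$ is invertible with inverse $\varrho^{\Ee}_{\sS^*(\chi)}$, so the cocycle condition \eqref{Eq:cocycle} holds. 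Because $(R,\Hh)$ is geometrically transitive and $\Alg{R}{\Bbbk}\neq\emptyset$, every object of $\frcomod{\Hh}$ is finitely generated and projective of constant rank $n$ over $R$ (as recalled from \cite{Deligne:1990}), so each $E_\sigma$ is $n$-dimensional and condition \ref{item:1}) of Definition \ref{def:rep} is satisfied with type fibre $V=\Bbbk^n$. Hence $\fF(M)\in\REP{\chara(R,\Hh)}$.

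For the remaining items, the colinearity of a comodule map $f$ translates immediately into commutativity of the squares \eqref{Eq:diag}, so $\fF(f)$ is a morphism of representations, and functoriality of $\fF$ follows from functoriality of the base changes $-\tensor{R}\Bbbk_\sigma$. To obtain commutativity of the square, I would use that the objects of $\chara(R,\Hh)$ form the discrete set $\Alg{R}{\Bbbk}$, so the global sections of $\fF(M)$ are $\Gama{\Ee}=\prod_{\sigma}E_\sigma$ as a module over $\mathrm{M}_{\Bbbk}(\Alg{R}{\Bbbk})=\prod_{\sigma}\Bbbk$, whereas $\fk{ev}^*\circ\Uu_{\Hh}(M)=M\tensor{R}\mathrm{M}_{\Bbbk}(\Alg{R}{\Bbbk})$. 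The canonical comparison map $M\tensor{R}\prod_{\sigma}\Bbbk_\sigma\to\prod_{\sigma}\big(M\tensor{R}\Bbbk_\sigma\big)$ is bijective because $M$ is finitely generated and projective over $R$, and it is natural in $M$; this provides the isomorphism $\omega\circ\fF\cong\fk{ev}^*\circ\Uu_{\Hh}$ witnessing the asserted commutativity.

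The part I expect to be delicate, rather than deep, is the pair of balancing verifications for $\varrho^{\Ee}_{\chi}$, where the conventions for source and target of both the coring $\Hh$ and the groupoid $\chara(R,\Hh)$ must be reconciled so that the image lands in the fibre $E_{\chi\Sf{t}}$ and not elsewhere. The only genuinely non-formal input is the constant-rank statement guaranteeing condition \ref{item:1}) of a $\chara(R,\Hh)$-representation, and this rests entirely on the geometric transitivity of $(R,\Hh)$; everything else is bookkeeping with the comodule axioms and the standard fact that tensoring by a finitely generated projective module commutes with products.
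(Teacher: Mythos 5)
Your construction is exactly the one the paper uses: the fibres $M\tensor{R}\Bbbk_\sigma$, the action $m\tensor{R}k\mapsto \sum m_{(0)}\tensor{R}\chi(m_{(1)})k$, the cocycle condition via the counit and coassociativity axioms against the dualized structure maps of $(R,\Hh)$, and the identification of global sections with $M\tensor{R}\algb{\chara(R,\Hh)}$ for the commutativity of the square. Your treatment is if anything slightly more explicit than the paper's (on the balancing checks and on why the comparison map into the product of fibres is bijective), but it is the same proof.
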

\begin{proof}
Let $P \in \mathsf{comod}_{\Hh}$ with $\Hh$-coaction $ P \ni p \mapsto p_0\tensor{R}p_1 \in P\tensor{R}\Hh$ (summation understood).  Consider  $P_x=P\tensor{R}\Bbbk_x$, for any algebra map $x \in \Alg{R}{\Bbbk}$.  Each of the $P_x$'s  is a finite dimensional vector $\Bbbk$-space with dimension the rank of $P$. We thus obtain a vector bundle $\Ee(P)=\cup_{x \in \Alg{R}{\Bbbk}}P_x$  over $\Alg{R}{\Bbbk}=\chara(R,\Hh)_0$. Now we want to endow this bundle within an   action of the groupoid $\chara(R,\Hh)$. Take an algebra map $g \in \Alg{\Hh}{\Bbbk}$, we  define the following map
\begin{equation}\label{Eq:EP} 
\varrho^{\Ee(P)}_g:  P\tensor{R}\Bbbk_{\Sf{s}^*(g)} \longrightarrow P\tensor{R}\Bbbk_{\Sf{t}^*(g)},\quad \lr{ p\tensor{R}k \longmapsto p_0\tensor{R}g(p_1)k  }
\end{equation}
where $\Sf{s}, \Sf{t}$ are the source and the target of $(R,\Hh)$. Since  the identity arrow of any $x \in \Alg{R}{\Bbbk}$ is given by $\varepsilon^*(x)$, we have 
$$ \varrho^{\Ee(P)}_{\varepsilon^*(x)}(p\tensor{R}k)\,=\, p_0\tensor{R}\varepsilon^*(x)(p_1)k\,=\,  p_0\tensor{R}x(\varepsilon(p_1))k\,=\, p_0\varepsilon(p_1)\tensor{R}k\,=\, p\tensor{R}k.$$
Hence $\varrho^{\Ee(P)}_{\varepsilon^*(x)}=id_{P_x}$, for every $x \in \Alg{R}{\Bbbk}$. On the other hand, if we take two arrows $g,f \in \Alg{\Hh}{\Bbbk}$ with $\Sf{t}^*(f)=\Sf{s}^*(g)$, then we have the following equalities\footnote{This is essentially the proof presented by diagrams in \cite[p.1299]{Hovey:2002}}
$$
\varrho^{\Ee(P)}_g \circ \varrho^{\Ee(P)}_f (p\tensor{R}k) = \varrho^{\Ee(P)}_g\Big( p_0\tensor{R}f(p_1)k \Big) = p_0\tensor{R}g(p_1)f(p_2)k = p_0\tensor{R}gf(p_1)k = \varrho^{\Ee(P)}_{gf}(p\tensor{R}k),
$$
which means that $\varrho^{\Ee(P)}_g \circ \varrho^{\Ee(P)}_f=\varrho^{\Ee(P)}_{gf}$. We have then  show that $\varrho^{\Ee(P)}$ satisfies the cocycle condition, and also it is an  isomorphism at each $g \in \Alg{\Hh}{\Bbbk}$. Therefore, $\varrho^{\Ee(P)}$ is an $\chara(R,\Hh)$-representation. This gives the definition of the functor $\fF$ on objects. 

Now it is easily seen, using the definition of the action given in \eqref{Eq:EP}, that any morphism $P \to Q$ in $\Sf{comod}_{\Hh}$ gives a rise to  a morphism $\Ee(P) \to \Ee(Q)$ between the associated $\chara(R,\Hh)$-representations, and the functor $\fF$ is now established. Lastly, the stated diagram is commutative since we know that, for every comodule $P \in \Sf{comod}_{\Hh}$, the module of global section of $\Ee(P)$ can be identified, as $\algb{\chara(R,\Hh)}$-module,  with the tensor product $P\tensor{R}\algb{\chara(R,\Hh)}$.
\end{proof}

\begin{proposition}\label{prop:counidad}
Let $(R,\Hh)$ be a geometrically transitive Hopf algebroid. 
Then there is a morphism of Hopf algebroids
$\B{\Omega}_{(R,\Hh)}:  (R,\Hh) \longrightarrow \big(\algb{\chara(R,\Hh)}, \rR_{\Bbbk} \circ \chara(R,\Hh)\big)$. Furthermore, 
$\B{\Omega}_{-}: id_{\GTCHAlgd_{\Bbbk}} \longrightarrow \rR_{\Bbbk} \circ \chara$ is a natural transformation.
\end{proposition}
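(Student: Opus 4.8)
The plan is to realise $\B{\Omega}_{(R,\Hh)}$ as the morphism of Hopf algebroids that the reconstruction process assigns to the functor $\fF$ of Lemma \ref{lema:F}. Since $(R,\Hh)$ is geometrically transitive, it is recovered from its Tannakian category, so that $\Hh \cong \lL_{\Bbbk}(\Uu_{\Hh}) \cong \int^{P \in \frcomod{\Hh}} \Uu_{\Hh}(P)^*\tensor{\Bbbk}\Uu_{\Hh}(P)$ by \eqref{Eq:Lomega}, while by Corollary and Definition \ref{coro:comatrix} the total algebra of $\RepF{\chara(R,\Hh)}$ is the analogous coend $\int^{\Ee \in \REP{\chara(R,\Hh)}}\omega(\Ee)^*\tensor{\Bbbk}\omega(\Ee)$ computed over the base algebra $B := \algb{\chara(R,\Hh)}$. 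The functor $\fF$ is $\Bbbk$-linear and monoidal, and by the commutative square of Lemma \ref{lema:F} it intertwines the two fibre functors through the base-change $\fk{ev}^*$; this is exactly the data that the reconstruction functoriality of Lemma \ref{lema:LK} turns into a morphism of Hopf algebroids.

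Concretely, I would take the base component of $\B{\Omega}_{(R,\Hh)}$ to be the evaluation map $\fk{ev}: R \to B$, and build its total component $\omega_1 : \Hh \to \RepF{\chara(R,\Hh)}$ from the universal property of the coend. For each $P \in \frcomod{\Hh}$ the identification $\omega(\fF(P)) \cong \Uu_{\Hh}(P)\tensor{R}B$ of Lemma \ref{lema:F}, together with the fact that the $R$-linear dual commutes with the base-change $\fk{ev}$ for the finitely generated projective module $\Uu_{\Hh}(P)$, produces a canonical map $\Uu_{\Hh}(P)^* \to \omega(\fF(P))^*$ and hence a $\Bbbk$-linear map
\begin{equation*}
\Uu_{\Hh}(P)^*\tensor{\Bbbk}\Uu_{\Hh}(P) \longrightarrow \omega(\fF(P))^*\tensor{\Bbbk}\omega(\fF(P)) \longrightarrow \RepF{\chara(R,\Hh)},
\end{equation*}
whose second arrow is the structural dinatural map of the coend \eqref{Eq:Lomega}. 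This family is dinatural in $P$, because the coend map is dinatural and $\fF$ is a functor, so it factors uniquely through $\Hh$ and defines $\omega_1$. That $(\fk{ev},\omega_1)$ respects units, source, target, counit, comultiplication and antipode then follows from the monoidality of $\fF$ and the explicit structure maps listed after Corollary and Definition \ref{coro:comatrix}, and I would simply invoke Lemma \ref{lema:LK} to avoid repeating these verifications.

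For the naturality of $\B{\Omega}_{-}$, let $\alpha=(\alpha_0,\alpha_1): (R,\Hh) \to (S,\Kk)$ be a morphism in $\GTCHAlgd_{\Bbbk}$. The square to be checked pairs the induction functor $\alpha^* : \frcomod{\Hh} \to \frcomod{\Kk}$ with the restriction functor $\B{\Rr}(\chara(\alpha)) : \REP{\chara(R,\Hh)} \to \REP{\chara(S,\Kk)}$ of Lemma \ref{lema:indfunct}, the latter coming from the groupoid morphism $\chara(\alpha)$ of Lemma \ref{lema:X}. The crux is to produce a natural isomorphism $\B{\Rr}(\chara(\alpha)) \circ \fF \cong \fF \circ \alpha^*$ compatible with both fibre functors, which amounts to comparing the two $\chara(S,\Kk)$-actions \eqref{Eq:EP} attached, respectively, to a comodule and to its induction along $\alpha$; this is the one genuinely computational point of the argument. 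Granting it, passing to coends and using the contravariant functoriality of $\chara$ (Lemma \ref{lema:X}) and of $\rR_{\Bbbk}$ (Proposition \ref{prop:R}) forces $\RepF{\chara(\alpha)} \circ \B{\Omega}_{(R,\Hh)} = \B{\Omega}_{(S,\Kk)} \circ \alpha$, which is the asserted naturality. The main obstacle is therefore not the existence of $\B{\Omega}_{(R,\Hh)}$, which is formal once Lemma \ref{lema:LK} is available, but this bookkeeping establishing the compatibility of $\fF$ with induction and restriction.
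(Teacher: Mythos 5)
Your proposal is correct and follows essentially the same route as the paper: the existence of $\B{\Omega}_{(R,\Hh)}$ is obtained by composing the reconstruction isomorphism $(R,\Hh)\cong(R,\lL_{\Bbbk}(\Uu_{\Hh}))$ with $\lL_{\Bbbk}(\fF_{\Hh})$ over the base change $\fk{ev}$, and naturality is reduced to the compatibility of $\fF$ with the induction functor $-\tensor{R}S$ and the restriction functor $\B{\Rr}(\chara(\alpha))$, followed by an application of $\lL_{\Bbbk}$. The only cosmetic difference is that the paper asserts the relevant square $\fF_{\Kk}\circ(-\tensor{R}S)=\B{\Rr}(\chara(\alpha))\circ\fF_{\Hh}$ commutes strictly in $\Tanna$, where you allow a natural isomorphism and rightly flag the comparison of the two actions \eqref{Eq:EP} as the one computation to be done.
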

\begin{proof}
Since $(R, \Hh)$ is geometrically transitive we know from \cite[Théorème 7.1]{Bruguieres:1994} that the pair $(\Sf{comod}_{\Hh}, \Uu_{\Hh})$ is a Tannakian $\Bbbk$-linear category and that the canonical morphism of Hopf algebroides $(R,\lL_{\Bbbk}(\Uu_{\Hh})) \to (R,\Hh)$ 
is an isomorphism, where $\lL_{\Bbbk}(\Uu_{\Hh})$ is the universal object attached to $(\Sf{comod}_{\Hh},\Uu_{\Hh})$, see the Appendix. 
Using the functor $\fF_{\Hh}$ constructed in  Lemma \ref{lema:F}, we know by Lemma \ref{lema:LK} that  there is  a morphism   $$\lL_ {\Bbbk}(\fF_{\Hh}): (R,\lL_{\Bbbk}(\Uu_{\Hh})) \longrightarrow \big(\algb{\chara(R,\Hh)},\rR_{\Bbbk}\chara(R,\Hh)\big)$$ of Hopf algebroids with  base change map $\fk{ev}: R \to  \algb{\chara(R,\Hh)}={\rm M}_{\Bbbk}(\Alg{R}{\Bbbk})$ the evaluation algebra map.
Therefore, we have a composition of  morphism 
\begin{equation}\label{Eq.Omega}
\B{\Omega}_{(R,\Hh)}: (R, \Hh) \cong (R,\lL_{\Bbbk}(\Uu_{\Hh})) \longrightarrow \big(\algb{\chara(R,\Hh)},\rR_{\Bbbk}\chara(R,\Hh)\big)
\end{equation}
of Hopf algebroids as it was stated. 

To show that $\B{\Omega}_{-}$ is natural, one need to check the following two equalities 
$$
\big(\rR_{\Bbbk}\circ \chara(\alpha)\big)_0 \circ \B{\Omega}_{(R,\Hh)_0} \,=\,  \B{\Omega}_{(S,\Kk)_0} \circ \alpha_0, \quad 
\B{\Omega}_{(S,\Kk)_1} \circ \alpha_1\,=\, \big(\rR_{\Bbbk}\circ \chara(\alpha)\big)_1 \circ \B{\Omega}_{(R,\Hh)_1} 
$$
for any morphism  $\alpha=(\alpha_0,\alpha_1): (R,\Hh) \to (S,\Kk)$ between geometrically transitive Hopf algebroids. The first equality is obviously obtained from the definitions. While the proof of the second is given as follows. Since  $\alpha$ is  a morphism of  Hopf algebroids, we have  the following commutative diagram
$$
\xymatrix@C=35pt@R=25pt{  &  &&  \scriptstyle{\Sf{comod}_{\Kk}} \ar@{->}|-{\scriptstyle{\Uu_{\Kk}}}[dd]|\hole  \ar@{->}^-{\scriptstyle{\fF_{\Kk}}}[r] & \scriptstyle{\REP{\chara(S,\Kk)}}  \ar@{->}|-{\scriptstyle{\omega_{\chara(S,\Kk)}}}[dd] \\ 
\scriptstyle{ \Sf{comod}_{\Hh}} \ar@{->}|-{\scriptstyle{-\tensor{R}S}}[rrru] \ar@{->}|-{\scriptstyle{\Uu_{\Hh}}}[dd] \ar@{->}_-{\scriptstyle{\fF_{\Hh}}}[r]  & \scriptstyle{\REP{\chara(R,\Hh)}} \ar@{->}|-{\scriptstyle{\Rr(\chara(\alpha))}}[rrru] \ar@{->}|-{\scriptstyle{\omega_{\chara(R,\Hh)}}}[dd]|\hole  &&   &  \\ 
&  && \scriptstyle{\Sf{proj}(S}) \ar@{->}^-{\scriptstyle{\fk{ev}^*}}[r]  & \scriptstyle{\Sf{proj}(\algb{\chara(S,\Kk)})}  \\ 
\scriptstyle{\Sf{proj}(R)} \ar@{->}|-{\scriptstyle{\alpha_0^*}}[rrru] \ar@{->}_-{\scriptstyle{\fk{ev}^*}}[r]  &\scriptstyle{ \Sf{proj}(\algb{\chara(R,\Hh)})} \ar@{->}|-{\scriptstyle{{\rm M}_{\Bbbk}({\Alg{\alpha_0}{\Bbbk})^*}}}[rrru]  &&    & 
}
$$
which implies the equality  $\fF_{\Kk} \circ (-\tensor{R}S) \,=\, 
\Rr(\chara(\alpha)) \circ \fF_{\Hh}$ in the category $\Tanna$ (see the Appendix), where $\mathscr{F}_{-}$ are the functors defined in Lemma \ref{lema:F}.  Now applying the functor $\lL_{\Bbbk}$ to this equality gives the desired equation. 
\end{proof}

\subsection{The main Theorem.}\label{ssec:thm}
Now we dispose of all  ingredients to state our main theorem. We will show that the contravariant functors $\chara$ and $\rR_{\Bbbk}$ establish  a duality between geometrically transitive Hopf algebroids and discrete groupoids. 
\smallskip

Let $\gG$ be a groupoid and $B=\algb{\gG}$ its base $\Bbbk$-algebra. There are various evaluating maps which will be used. So a distinguishing notation is helpful. These are 
$$
ev: \gG_0 \to \Alg{B}{\Bbbk},\quad  \fk{ev}: B \to {\rm M}_{\Bbbk}(\Alg{B}{\Bbbk}),\quad \Sf{ev}: {\rm M}_{\Bbbk}(\Alg{B}{\Bbbk}) \to \algb{\gG}=B
$$
where the last algebra map uses the first one and we have $\Sf{ev} \circ \fk{ev}=id_B$.

\begin{lemma}\label{lema:triangulo1}
Consider the natural transformations  $\B{\Theta}$ and $\B{\Omega}$, respectively, of Lemma \ref{lema:unit}(ii) and Proposition \ref{prop:counidad}. Then for any groupoid $\gG$ with base $\Bbbk$-algebra $B$, we have 
$$
\rR_{\Bbbk}(\B{\Theta}_{\gG}) \circ \B{\Omega}_{(B, \Repf{G})}\,=\, id_{(B,\Repf{G})}.
$$ 
\end{lemma}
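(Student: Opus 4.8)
The plan is to verify the claimed equality componentwise. Both $\rR_{\Bbbk}(\B{\Theta}_{\gG})\circ\B{\Omega}_{(B,\Repf{G})}$ and $id_{(B,\Repf{G})}$ are morphisms of Hopf algebroids $(B,\Repf{G})\to(B,\Repf{G})$, and such a morphism is a pair (base component, total component), so it suffices to match the two components. Write $\hH:=\chara(B,\Repf{G})$, so that $\hH_0=\Alg{B}{\Bbbk}$ and $\hH_1=\Alg{\Repf{G}}{\Bbbk}$. The base component of $\rR_{\Bbbk}(\B{\Theta}_{\gG})$ is ${\rm M}_{\Bbbk}(\B{\Theta}_{\gG_0})={\rm M}_{\Bbbk}(ev)=\Sf{ev}$, while the base component of $\B{\Omega}_{(B,\Repf{G})}$ is the evaluation map $\fk{ev}$. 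Hence on base algebras the composite is $\Sf{ev}\circ\fk{ev}=id_B$, which is precisely the relation recorded just before the statement; this settles the base part with no further work.

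For the total part I would exploit the injectivity of the map $\zeta_{\gG}\colon\Repf{G}\to\algt{\gG}$ of Proposition \ref{prop:zeta}: it suffices to prove $\zeta_{\gG}\circ\rR_{\Bbbk}(\B{\Theta}_{\gG})_1\circ\B{\Omega}_{(B,\Repf{G})_1}=\zeta_{\gG}$. Applying the commutative square of Proposition \ref{prop:R} to the groupoid morphism $\phi=\B{\Theta}_{\gG}\colon\gG\to\hH$ gives
\begin{equation*}
\zeta_{\gG}\circ\rR_{\Bbbk}(\B{\Theta}_{\gG})_1\;=\;{\rm M}_{\Bbbk}(\B{\Theta}_{\gG_1})\circ\zeta_{\hH},
\end{equation*}
so the problem reduces to understanding $\zeta_{\hH}\circ\B{\Omega}_{(B,\Repf{G})_1}$ and then pairing against the arrows $\B{\Theta}_{\gG_1}(g)$.

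The key step, which I expect to be the main obstacle, is the following reconstruction identity, valid for \emph{any} geometrically transitive Hopf algebroid $(R,\Hh)$:
\begin{equation*}
\zeta_{\chara(R,\Hh)}\big(\B{\Omega}_{(R,\Hh)_1}(h)\big)(\chi)\;=\;\chi(h),\qquad h\in\Hh,\ \chi\in\Alg{\Hh}{\Bbbk}.
\end{equation*}
To prove it I would unwind the definition of $\B{\Omega}_{(R,\Hh)}$ from \eqref{Eq.Omega}: under the reconstruction isomorphism $\Hh\cong\lL_{\Bbbk}(\Uu_{\Hh})$ a matrix coefficient $\bara{\varphi\tensor{}p}$ (with $P\in\Sf{comod}_{\Hh}$, $\varphi\in\Uu_{\Hh}(P)^{*}$, $p\in P$ and coaction $p\mapsto p_0\tensor{R}p_1$) corresponds to $h=\Sf{t}(\varphi(p_0))p_1\in\Hh$, and $\lL_{\Bbbk}(\fF_{\Hh})$ carries it to the matrix coefficient of the representation $\fF_{\Hh}(P)=\Ee(P)$ of $\chara(R,\Hh)$. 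Evaluating $\zeta_{\chara(R,\Hh)}$ on this element at an arrow $\chi\in\Alg{\Hh}{\Bbbk}$, and using the explicit action $\varrho^{\Ee(P)}_{\chi}(p\tensor{R}1)=p_0\tensor{R}\chi(p_1)$ from \eqref{Eq:EP} together with the definition of $\zeta$ in \eqref{Eq:zeta}, one computes exactly $\chi\big(\Sf{t}(\varphi(p_0))p_1\big)=\chi(h)$. The delicate points are tracking the $R$-module structures through $\Sf{s}$ and $\Sf{t}$ in the coefficient formula, and checking that $\fF_{\Hh}$ sends a dual basis of $P$ to the global dual basis of $\Ee(P)$ that enters the definition of $\zeta$.

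With the reconstruction identity in hand the computation closes at once: for $F\in\Repf{G}$ and $g\in\gG_1$,
\begin{align*}
\zeta_{\gG}\big(\rR_{\Bbbk}(\B{\Theta}_{\gG})_1(\B{\Omega}_{(B,\Repf{G})_1}(F))\big)(g)
&=\zeta_{\hH}\big(\B{\Omega}_{(B,\Repf{G})_1}(F)\big)\big(\B{\Theta}_{\gG_1}(g)\big) \\
&=\B{\Theta}_{\gG_1}(g)(F) \\
&=\zeta_{\gG}(F)(g),
\end{align*}
where the first equality uses the square above and the definition of ${\rm M}_{\Bbbk}(\B{\Theta}_{\gG_1})$ as precomposition with $\B{\Theta}_{\gG_1}$, the second applies the reconstruction identity to $(R,\Hh)=(B,\Repf{G})$ with $\chi=\B{\Theta}_{\gG_1}(g)$, and the last is the very definition of $\B{\Theta}_{\gG_1}$. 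Injectivity of $\zeta_{\gG}$ then yields $\rR_{\Bbbk}(\B{\Theta}_{\gG})_1\circ\B{\Omega}_{(B,\Repf{G})_1}=id_{\Repf{G}}$, which together with the base part proves the stated equality.
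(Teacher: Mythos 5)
Your proof is correct, but it follows a genuinely different route from the paper's. The paper handles the total component entirely at the categorical level: it composes the equivalence $\Rep{G}\cong \Sf{comod}_{\Repf{G}}$ with the functor $\fF$ of Lemma \ref{lema:F} to obtain $\fF_{\Repf{G}}:\Rep{G}\to \REP{\chara(B,\Repf{G})}$, observes that $\Rr(\B{\Theta}_{\gG})\circ \fF_{\Repf{G}}$ is identified with the identity of $\Rep{G}$ compatibly with the fiber functors (the fibers and the $\gG$-actions match), and then applies $\lL_{\Bbbk}$ to the resulting commutative diagram. You instead argue elementwise: after disposing of the base component via $\Sf{ev}\circ\fk{ev}=id_B$, you reduce the total component, through the injectivity of $\zeta_{\gG}$ and the naturality square of Proposition \ref{prop:R}, to the identity $\zeta_{\chara(R,\Hh)}\big(\B{\Omega}_{(R,\Hh)_1}(h)\big)(\chi)=\chi(h)$. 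That identity is precisely the computation the paper performs in the proof of the \emph{other} triangle identity, Lemma \ref{lema:triangulo2} (the chain of equalities following \eqref{Eq:EF}), so in effect you derive Lemma \ref{lema:triangulo1} from the key step of Lemma \ref{lema:triangulo2} together with the injectivity of $\zeta_{\gG}$ --- a nice economy, since one explicit computation then settles both triangles. What the paper's approach buys is independence from the embedding $\zeta$ and a proof that works uniformly through the universal property of $\lL_{\Bbbk}$; what yours buys is complete checkability on generic elements $\bara{\varphi\tensor{\tend{\Ee}}p}$, at the cost of leaning on Proposition \ref{prop:zeta}, whose injectivity statement itself required the coend argument. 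One small remark: the second ``delicate point'' you flag (tracking dual bases through $\fF_{\Hh}$) is not actually needed, since $\zeta$ is defined directly on generic elements in \eqref{Eq:zeta} without any choice of dual basis; the dual basis only enters the comultiplication.
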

\begin{proof}
We know that $(B, \Repf{G})$ is geometrically transitive. In particular,  we have a monoidal equivalence of categories $\Sf{comod}_{\Repf{G}}\cong \Rep{G}$ (in fact an isomorphism in the category $\Tanna$). In one direction the functor $\Rep{G} \to \Sf{comod}_{\Repf{G}}$  takes any $\gG$-representation $\varrho^{\Ee}$, to its global sections module $\Gama{\Ee}$ with the following  $\Repf{G}$-comodule  structure  
$$
\Gama{\Ee} \longrightarrow \Gama{\Ee}\tensor{B}\Repf{G}, \quad \Big(s \longmapsto \sum_i s_i \tensor{B} (\bara{s_i^*\tensor{\tend{\Ee}}s})\Big)
$$
where $\{s_i,s_i^*\}$ is the dual basis of the module $\Gama{\Ee}$. In this way, we compose this functor  with the functor $\fF$ of Lemma \ref{lema:F} to obtain a new functor $\fF_{\Repf{G}}: \Rep{G} \to 
\REP{\chara(B,\Repf{G})}$. 
We then arrive to  the following commutative diagram 
$$
\xymatrix@C=50pt{ & \scriptstyle{\REP{\chara(B,\Repf{G})}}  \ar@{->}^-{\scriptstyle{\Rr(\B{\Theta}_{\gG})}}[dr] \ar@{->}|-{\scriptstyle{\omega_{\chara(B,\Repf{\gG})}}}[ddd]|\hole & \\ \scriptstyle{\Rep{G}} \ar@{->}_-{\scriptstyle{\omega_{\gG}}}[ddd]  \ar@{->}^-{\scriptstyle{\fF_{\Repf{G}}}}[ru]  \ar@{-}^>>>>>>>>>{\cong}[rr]|\hole & & \scriptstyle{\Rep{G} \ar@{->}^-{\scriptstyle{\omega_{\gG}}}[ddd]} \\ & & \\ & \scriptstyle{\Sf{proj}({\rm M}_{\Bbbk}(\Alg{B}{\Bbbk})} \ar@{->}^-{\scriptstyle{\Sf{ev}^*}}[rd]  & \\  \scriptstyle{\Sf{proj}(B)} \ar@{=}[rr]  \ar@{->}^-{\scriptstyle{\fk{ev}^*}}[ru] & &  \scriptstyle{\Sf{proj}(B)}} 
$$
whose proof is based on  the fact that for any $\gG$-representation $\varrho^{\Ee}$  the fibers of the underlying bundle of the $\gG$-representation $\Rr(\B{\Theta}_{\gG}) \circ \fF_{\Repf{\gG}}(\varrho^{\Ee})$ are identified with those of $\Ee$, as well as their  $\gG$-actions. Now the stated equality follows by applying the functor $\lL_{\Bbbk}$ (see Appendix) to the above diagram. 
\end{proof}

\begin{lemma}\label{lema:triangulo2}
Consider the natural transformations  $\B{\Theta}$ and $\B{\Omega}$, respectively, of Lemma \ref{lema:unit}(ii) and Proposition \ref{prop:counidad}. Then for any geometrically transitive Hopf algebroid $(R,\Hh)$, we have 
$$
\chara\big({\B{\Omega}}_{(R,\Hh)} \big)  \circ \B{\Theta}_{\chara(R,\Hh)}\,=\, id_{\chara(R,\Hh)}.
$$ 
\end{lemma}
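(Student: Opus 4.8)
The plan is to verify the asserted identity of groupoid endomorphisms of $\gG := \chara(R,\Hh)$ componentwise, since a morphism of groupoids is determined by its effect on objects $\gG_0 = \Alg{R}{\Bbbk}$ and on arrows $\gG_1 = \Alg{\Hh}{\Bbbk}$. Throughout I write $\Sf{s}^*,\Sf{t}^*,\varepsilon^*$ for the structure maps of $\gG$ as in \S\ref{ssec:X}, and I recall from Proposition \ref{prop:counidad} that the object (base) component of $\B{\Omega}_{(R,\Hh)}$ is the evaluation map $\fk{ev}: R \to {\rm M}_{\Bbbk}(\Alg{R}{\Bbbk})$, $\fk{ev}(r)(x)=x(r)$, and that $\B{\Omega}_{(R,\Hh)} = \lL_{\Bbbk}(\fF_{\Hh})$ up to the reconstruction isomorphism $\Hh \cong \lL_{\Bbbk}(\Uu_{\Hh})$ of \eqref{Eq.Omega}.

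First I would dispose of the objects, which is the routine half. Applying the contravariant functor $\chara$ turns $\fk{ev}$ into $\fk{ev}^*: \Alg{{\rm M}_{\Bbbk}(\Alg{R}{\Bbbk})}{\Bbbk} \to \Alg{R}{\Bbbk}$, $\chi \mapsto \chi \circ \fk{ev}$, while $\B{\Theta}_{\gG_0}=ev$ sends $x \in \Alg{R}{\Bbbk}$ to the character $ev(x): a \mapsto a(x)$ of ${\rm M}_{\Bbbk}(\Alg{R}{\Bbbk})$. For every $r \in R$ one computes $\big(ev(x)\circ\fk{ev}\big)(r) = \fk{ev}(r)(x) = x(r)$, whence $\fk{ev}^*\big(ev(x)\big) = x$ and the composite on objects is $id_{\gG_0}$.

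The substance lies in the arrow component, where I must show $\B{\Theta}_{\gG_1}(g)\circ\B{\Omega}_1 = g$ for every $g \in \Alg{\Hh}{\Bbbk}$, with $\B{\Omega}_1: \Hh \to \Repf{\gG}$ the arrow component of $\B{\Omega}_{(R,\Hh)}$. Both sides are $\Bbbk$-algebra maps, so it suffices to test on generators: by geometric transitivity (condition $\Sf{ST}2$) every $h \in \Hh$ is a finite sum of ``matrix coefficients'' $h = \sum \Sf{t}(\varphi(p_0))\,p_1$ arising from $\Hh \cong \lL_{\Bbbk}(\Uu_{\Hh})$, where $P \in \Sf{comod}_{\Hh}$ has coaction $p \mapsto p_0\tensor{R}p_1$, $\varphi \in \hom{R}{P}{R}$ and $p \in P$; here the left $R$-action on $\Hh$ is through $\Sf{t}$ by the comodule convention of \S\ref{ssec:0}. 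Since $\omega_{\gG}\circ\fF_{\Hh} = \fk{ev}^*\circ\Uu_{\Hh}$ by Lemma \ref{lema:F}, the functor $\lL_{\Bbbk}$ carries this matrix coefficient to $\B{\Omega}_1(h) = \bara{\widehat{\varphi}\tensor{\tend{\Ee(P)}}\widehat{p}}$, where $\Ee(P)=\fF_{\Hh}(P)$, the section $\widehat{p}\in\Gama{\Ee(P)}$ is $x \mapsto p\tensor{R}1 \in P_x$, and $\widehat{\varphi}$ is the dual section $x \mapsto [\,p'\tensor{R}k \mapsto x(\varphi(p'))k\,]$.

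Finally I would compute the left-hand side directly. By definition $\B{\Theta}_{\gG_1}(g)=\zeta(-)(g)$, so applying Proposition \ref{prop:zeta} together with the explicit action \eqref{Eq:EP} of Lemma \ref{lema:F} gives
\begin{equation*}
\B{\Theta}_{\gG_1}(g)\big(\B{\Omega}_1(h)\big) = \widehat{\varphi}(\Sf{t}^*(g))\big(\varrho^{\Ee(P)}_g(\widehat{p}(\Sf{s}^*(g)))\big) = \widehat{\varphi}(\Sf{t}^*(g))\big(p_0\tensor{R}g(p_1)\big) = \sum g\big(\Sf{t}(\varphi(p_0))\big)\,g(p_1),
\end{equation*}
which, as $g$ is multiplicative, equals $g\big(\sum \Sf{t}(\varphi(p_0))\,p_1\big) = g(h)$. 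Hence the arrow component is also the identity, proving the claim. The delicate point deserving care is the identification of $\B{\Omega}_1$ on matrix coefficients: one must track the reconstruction isomorphism $\Hh \cong \lL_{\Bbbk}(\Uu_{\Hh})$ through $\lL_{\Bbbk}$ applied to $\fF_{\Hh}$, and in particular verify that the left $R$-module structure on $\Hh$ (via $\Sf{t}$) entering the matrix coefficient $\sum\Sf{t}(\varphi(p_0))p_1$ is exactly the one reproduced by the factor $\Sf{t}^*(g)=g\circ\Sf{t}$ in $\zeta$; it is precisely this compatibility that makes the expression telescope to $g(h)$.
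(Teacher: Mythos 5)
Your proposal is correct and follows essentially the same route as the paper: the object component is handled by the same direct composition of evaluation maps, and the arrow component is verified by identifying $\Hh$ with its reconstruction $\lL_{\Bbbk}(\Uu_{\Hh})$, writing elements as matrix coefficients $\sum\Sf{t}(\varphi(p_0))p_1 = \bara{\varphi\tensor{}p}$, and computing $\zeta(\cdot)(g)$ via the action \eqref{Eq:EP} and the multiplicativity of $g$. The only cosmetic difference is that you run the matrix-coefficient identification in the opposite direction (starting from $h$ rather than ending at $\bara{\psi\tensor{}q}$), and your appeal to condition $\Sf{ST}2$ should really be to the reconstruction theorem \cite[Th\'eor\`eme 7.1]{Bruguieres:1994} as cited in Proposition \ref{prop:counidad}.
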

\begin{proof}
We need to check the following two equalities
\begin{equation}\label{Eq:triang2}
\chara\big({\B{\Omega}}_{(R,\Hh)} \big)_0  \circ \B{\Theta}_{\chara(R,\Hh)_0}\,=\, id_{\Alg{R}{\Bbbk}}\;\; \text{ and } \quad \chara\big({\B{\Omega}}_{(R,\Hh)} \big)_1  \circ \B{\Theta}_{\chara(R,\Hh)_1}\,=\, id_{\Alg{\Hh}{\Bbbk}},
\end{equation}
The right hand term of the first one is just the composition of the following two maps
$$ 
\xymatrix@R=0pt{ \Alg{R}{\Bbbk} \ar@{->}[r] & \Alg{{\rm M}_{\Bbbk}(\Alg{R}{\Bbbk})}{\Bbbk} \\ x  \ar@{|->}[r] & \Lr{\beta \mapsto \beta(x)}  } \qquad 
\xymatrix@R=0pt{  \Alg{{\rm M}_{\Bbbk}(\Alg{R}{\Bbbk})}{\Bbbk} \ar@{->}[r] & \Alg{R}{\Bbbk}  \\ \delta \ar@{|->}[r] & \Lr{r \mapsto \delta(ev_r)}  }
$$ where $ev: R \to {\rm M}_{\Bbbk}(\Alg{R}{\Bbbk})$ sends $r \mapsto \big[x \mapsto x(r) \big]$, which is the identity of $\Alg{R}{\Bbbk}$. 

Now we want to check the second equality in \eqref{Eq:triang2}. To this end, we first identify $(R, \Hh)$ with the Hopf algebroid $(R, \lL_{\Bbbk}(\frcomod{\Hh}))$. Under this  identification the coaction of $Q \in \frcomod{\Hh}$ is given by    $q \mapsto q_0\tensor{R}q_1= q_i\tensor{R} (\bara{q_i^*\tensor{{\rm T}_{Q}}q})$ for some fixed dual basis $\{q_i,q_i^*\}$ of $Q_R$. On the other hand the elements of $\Hh$ are now considered as sum of generic elements of the form 
$\bara{\psi\tensor{{\rm T}_{Q}}q} \in  \Hh$ for some $\Hh$-comodule $Q \in \frcomod{\Hh}$ with $\psi \in Q^*, \, q \in Q$. 
In this way the left hand-side term of the second equality in equation \eqref{Eq:triang2} is explicitly given by the map
\begin{equation}\label{Eq:EF}
\Alg{\Hh}{\Bbbk} \longrightarrow   \Alg{\Hh}{\Bbbk}, \quad \lr{g \longmapsto \Big[ \bara{\psi\tensor{{\rm T}_{Q}}q} \mapsto (\psi\tensor{R}1)(\Sf{t}^*(g)) \varrho^{\Ee(Q)}_g\big((q\tensor{R}1)(\Sf{s}^*(g)\big) \Big]}
\end{equation}
where the bundle $\Ee(Q)$ is $\cup_{x \, \in \Alg{R}{\Bbbk}}Q\tensor{R}\Bbbk_x$ and its action $\varrho^{\Ee(Q)}_g$ is as in equation \eqref{Eq:EP}. Here $\Sf{s}$  and $\Sf{t}$  are, respectively, the source and the target of $(R,\Hh)$. Computing the resulting value in \eqref{Eq:EF}, we find that  
\begin{eqnarray*}
(\psi\tensor{R}1)(\Sf{t}^*(g)) \varrho^{\Ee(Q)}_g\big((q\tensor{R}1)(\Sf{s}^*(g)\big) &=& (\psi\tensor{R}1)(\Sf{t}^*(g)) \big(q_0\tensor{R}g(q_1)\big) \\ &=& ((g \circ \Sf{t} \circ \psi )\tensor{R}1) \big(q_0\tensor{R}g(q_1)\big) \\ &=& g(\Sf{t}(\psi(q_0))g(q_1) \\ &=& g\Big( \Sf{t}(\psi(q_0))q_1 \Big) \\ &=& g\Big( \Sf{t}(\psi(q_i)\, \bara{q_i^*\tensor{R}q} \Big) \\ &=& g\Big(\bara{( \Sf{t}(\psi(q_i)q_i^*)\tensor{R}q} \Big) \\ &=& g\Big(\bara{\psi\tensor{R}q} \Big),
\end{eqnarray*}
which shows that the map of equation \eqref{Eq:EF} is the identity and finishes the proof. 
\end{proof}

Our main result is the following theorem.
\begin{theorem}\label{thm:main}
Let $\Bbbk$ be a field. Then the contravariant functors of $\Bbbk$-characters groupoid  and $\Bbbk$-valued representative functions 
$$\xymatrix{\chara: \Sf{GTCHAlgd}_{\Bbbk} \ar@<-0.5ex>@{->}[r] & \Grpd: \rR_{\Bbbk} \ar@<-0.5ex>@{->}[l]  }$$ 
establish a duality between the category of  geometrically transitive commutative Hopf algebroids and the category of discrete groupoids. That is, there is a natural isomorphism:
$$
{\rm Hom}_{\scriptstyle{\Sf{GTCHAlgd}_{\Bbbk}}}\lr{{(R,\Hh)}\, ;\,{\big(\algb{\gG},\Repf{G}\big)} }\,\, \cong \,\,  {\rm Hom}_{\scriptstyle{\Grpd}}\lr{{\gG}\, ; \,{\chara(R,\Hh)}}
$$
for every $(R,\Hh) \in \Sf{GTCHAlgd}_{\Bbbk}$ and $\gG \in \Grpd$.
\end{theorem}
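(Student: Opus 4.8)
The plan is to recognize that the theorem is the formal completion of the duality whose constituent data have all been assembled in the preceding subsections. Write $B:=\algb{\gG}$, set $\mathscr{A}=\Grpd$ and $\mathscr{B}=\GTCHAlgd_{\Bbbk}$, and take $F=\rR_{\Bbbk}$ (Proposition \ref{prop:R}) together with $G=\chara$ (Lemma \ref{lema:X}). Then the unit $\B{\Theta}: id_{\Grpd}\to \chara\circ\rR_{\Bbbk}$ (Lemma \ref{lema:unit}) and the counit $\B{\Omega}: id_{\GTCHAlgd_{\Bbbk}}\to \rR_{\Bbbk}\circ\chara$ (Proposition \ref{prop:counidad}) are exactly the four pieces of data appearing in the definition of a duality given at the start of this section; moreover Lemmas \ref{lema:triangulo1} and \ref{lema:triangulo2} are precisely the two triangular identities required there. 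It therefore remains only to deduce from this datum the announced bijection on Hom-sets and its naturality.

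Accordingly, first I would write down the two candidate maps explicitly. Given $\alpha\in\hom{\GTCHAlgd_{\Bbbk}}{(R,\Hh)}{(B,\Repf{G})}$, applying the contravariant functor $\chara$ and precomposing with the unit yields
$$\Phi(\alpha)\,=\,\chara(\alpha)\circ\B{\Theta}_{\gG}\,\in\,\hom{\Grpd}{\gG}{\chara(R,\Hh)},$$
while given $\beta\in\hom{\Grpd}{\gG}{\chara(R,\Hh)}$, applying $\rR_{\Bbbk}$ and precomposing with the counit yields
$$\Psi(\beta)\,=\,\rR_{\Bbbk}(\beta)\circ\B{\Omega}_{(R,\Hh)}\,\in\,\hom{\GTCHAlgd_{\Bbbk}}{(R,\Hh)}{(B,\Repf{G})}.$$

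Then I would check $\Psi\circ\Phi=id$ and $\Phi\circ\Psi=id$ by the standard two-line manipulation. For the first, contravariance gives $\rR_{\Bbbk}(\Phi(\alpha))=\rR_{\Bbbk}(\B{\Theta}_{\gG})\circ\rR_{\Bbbk}(\chara(\alpha))$, and naturality of $\B{\Omega}$ along $\alpha$ rewrites $\rR_{\Bbbk}(\chara(\alpha))\circ\B{\Omega}_{(R,\Hh)}$ as $\B{\Omega}_{(B,\Repf{G})}\circ\alpha$; the triangular identity of Lemma \ref{lema:triangulo1}, namely $\rR_{\Bbbk}(\B{\Theta}_{\gG})\circ\B{\Omega}_{(B,\Repf{G})}=id_{(B,\Repf{G})}$, then collapses the composite to $\alpha$. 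Symmetrically, for $\Phi\circ\Psi=id$ one expands $\chara(\Psi(\beta))=\chara(\B{\Omega}_{(R,\Hh)})\circ\chara(\rR_{\Bbbk}(\beta))$, uses naturality of $\B{\Theta}$ along $\beta$ to replace $\chara(\rR_{\Bbbk}(\beta))\circ\B{\Theta}_{\gG}$ by $\B{\Theta}_{\chara(R,\Hh)}\circ\beta$, and finishes with the identity $\chara(\B{\Omega}_{(R,\Hh)})\circ\B{\Theta}_{\chara(R,\Hh)}=id_{\chara(R,\Hh)}$ of Lemma \ref{lema:triangulo2}. Naturality of the resulting bijection in both arguments is then automatic: in $\gG$ from naturality of $\B{\Theta}$ together with functoriality of $\chara$ and $\rR_{\Bbbk}$, and in $(R,\Hh)$ from naturality of $\B{\Omega}$.

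The honest remark is that no genuine obstacle survives at this stage: all the analytic and Tannakian content lives in the earlier results (the reconstruction underlying Proposition \ref{prop:counidad} via \cite{Bruguieres:1994}, and the fiber-functor computations giving the two triangles). The only points requiring care are bookkeeping ones, namely tracking the reversal of composition order under the contravariant functors and keeping in mind that a morphism on either side is a pair $(\,\cdot_0,\,\cdot_1)$, so that each displayed identity must be read simultaneously in the base and total components. Since Lemmas \ref{lema:unit}, \ref{lema:triangulo1} and \ref{lema:triangulo2} were already verified component-wise, this causes no difficulty and the assembly above is purely formal.
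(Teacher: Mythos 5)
Your proposal is correct and follows exactly the paper's route: the paper proves the theorem by declaring it a direct consequence of Lemmas \ref{lema:triangulo1} and \ref{lema:triangulo2}, i.e.\ by assembling the unit $\B{\Theta}$, the counit $\B{\Omega}$ and the two triangular identities into the duality as defined at the start of Section \ref{sec:3}. You merely spell out the standard formal verification (the mutually inverse maps $\Phi$, $\Psi$ and their naturality) that the paper leaves implicit, which is a faithful and harmless elaboration of the same argument.
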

\begin{proof}
This is a direct consequence of Lemmas \ref{lema:triangulo1} and \ref{lema:triangulo2}.
\end{proof}

\begin{remark}\label{rem:final}
In analogy with  the anti-equivalence between compact  topological  groups and commutative Hopf algebras with (positive) integral\footnote{which corresponds to the Haar measure on the group.}and  dense characters group, which was proved in  \cite[Theorem 3.5, page 30]{Hochschild:book1}, see also \cite[Theorem 3.4.3] {Abe:book}. It is the belief of the author that a similar duality as in  Theorem \ref{thm:main} by using the functor $\rR_{\Bbbk}^{\scriptscriptstyle{top}}$,  could be performed.  In this direction, we expect that it is  possible to construct an anti-equivalence of categories between the category of compact topological groupoids and a certain full subcategory of commutative Hopf algebroids. Parallel to the results of \cite{Amini:2007}, such an anti-equivalence could be taught as a Tannaka-Krein duality for compact  topological groupoids. 

Of course, similar to the classical case of compact topological groups, here there are surely two essential difficulties which one is required  to overcome. The  first one is after endowing a given compact topological groupoid $\gG$  within a  (left) Haar system, we are required to check how this can be  reflected in its Hopf algebroid $\tRepf{G}$. This perhaps is reflected  in terms of a system of integrals in the isotropy Hopf algebras. The  second difficulty, is to proof a  version of Peter-Weyl's theorem in this context, namely, that $\tRepf{G}$ is dense in  the total algebra $\calgt{\gG}$.  
\end{remark}

\appendix

\section{Basic Tannakian $\Bbbk$-linear categories and the functor $\lL_{\Bbbk}$.}\label{sec:TL}
We present here a brief account on Tannakian $\Bbbk$-linear\footnote{$\Bbbk$-linear means enriched in $\Bbbk$-vector spaces.}categories.  What is essential for our use is the construction of the functor $\lL_{\Bbbk}$ from the category of all Tannakian $\Bbbk$-linear categories to the category of geometrically transitive Hopf algebroids with ground field $\Bbbk$.

For more details on Tannakian $\Bbbk$-linear categories, we refer to \cite{Deligne:1990, Deligne/Milne, Bruguieres:1994}, see also \cite{El Kaoutit/Gomez:2004}.  
Recall from \cite[\S 2]{Deligne:1990} (see also \cite[\S 2]{Bruguieres:1994} for a weaker definition) that a symmetric  monoidal rigid (or autonomous) $\Bbbk$-linear (essentially small) category $(\Tt, \tensor{}, \I)$\footnote{We are implicitly assuming that $\tensor{}$ is a $\Bbbk$-bilinear bi-functor.}, is said to be a \emph{$\Bbbk$-tensorial category }  "est une cat\'egorie tensorielle sur $\Bbbk$", if its underlying category $\Tt$ is abelian and the canonical algebra map $\Bbbk \to \End{\Tt}{\I}$ is an isomorphism.  
Let $(\Tt, \tensor{}, \I)$ be a  $\Bbbk$-tensorial category, and  $R$ a commutative $\Bbbk$-algebra.  Following \cite[1.9]{Deligne:1990} (see also \cite[Definition p.5826]{Bruguieres:1994}), a \emph{fiber functor of $\Tt$ over $R$} is a monoidal $\Bbbk$-linear faithful and right exact functor from $\Tt$ to the category $\Sf{proj}(R)$ of finitely generated and projective $R$-modules. Such a functor $\omega: \Tt \to \Sf{proj}(R)$  satisfies $\omega(\I) \cong R$ and a natural isomorphism $\omega(-\tensor{}-)\cong \omega(-)\tensor{R}\omega(-)$ compatible with both associativity and  symmetry of $\tensor{}$. A \emph{Tannakian $\Bbbk$-linear category} \cite[2.8]{Deligne:1990}, is then a $\Bbbk$-tensorial category with a fiber functor $\omega: \Tt \to \Sf{proj}(R)$ (here in fact we are restricting the definition \cite[2.8]{Deligne:1990} to affine schemes).

Notice here that  $\omega$ is not trivial, since we are assuming that $\Alg{R}{\Bbbk}\neq \emptyset$.  In particular, we have by \cite[Proposition 2.5]{Bruguieres:1994} that $\Tt$ is \emph{locally of finite type over $\Bbbk$} (see the definition before \cite[Proposition 2.5]{Bruguieres:1994}). 
We denote the situation of a given Tannakian $\Bbbk$-linear category  by $(\Tt, \omega)_R$. Tannakians categories are objects of the category $\Tanna$ where a morphism $(\Tt,\omega)_R \to (\Pp, \gamma)_{S}$ between two Tannakian categories consists of $\Bbbk$-algebra map $\theta:R \to S$ and a functor $\Sf{f}:\Tt \to \Pp$ such that the following diagram is commutative
$$ 
\xymatrix@C=50pt{ \Tt \ar@{->}^-{\Sf{f}}[r] \ar@{->}_-{\omega}[d]  & \Pp \ar@{->}^-{\gamma}[d]  \\ \Sf{proj}(R) \ar@{->}^-{\theta^*}[r] & \Sf{proj}(S).}
$$

To each object in $(\Tt,\omega)_R \in \Tanna$ one can associate a universal object denoted $\lL_{\Bbbk}(\omega)$ which turns to be a commutative Hopf $R$-algebroid. In the notation of \cite{Deligne:1990}, this  is $\underline{\bf Aut}^{\tensor{}}(\omega)$ the set of all monoidal natural isomorphisms of $\omega$. In categorical terms, the universality of  $\lL_{\Bbbk}(\omega)$ can be expressed at least in two different (rather equivalent) ways. 
The first one says that $\lL_{\Bbbk}(\omega)$ is the $(R\tensor{\Bbbk}R)$-bimodule which  solves the following universal  problem in $R$-bimodules between natural transformations and $R$-bilinear maps:
\begin{eqnarray*}
{\rm Nat}\lr{\omega,\, -\tensor{R}\omega} &\cong & \hom{R\text{-}R}{\lL_{\Bbbk}(\omega)}{-}, \\ {\rm Nat}\Big(\omega\tensor{R}\omega,\, -\tensor{R}(\omega\tensor{R}\omega)\Big) & \cong & \hom{R\text{-}R}{\lL_{\Bbbk}(\omega)\tensor{R^{\Sf{e}}}\lL_{\Bbbk}(\omega)}{-},
\end{eqnarray*}
where $R^{\Sf{e}}=R\tensor{\Bbbk}R$ is the enveloping algebra of $R$. 

The second one says that $\lL_{\Bbbk}(\omega)$ is the $R$-coring\footnote{What is called here an $R$-coring is called "$\Bbbk$-cogébroïde de base $R$" in \cite{Deligne:1990,Bruguieres:1994}.} which represents the following functor 
$$
\Coring{R} \longrightarrow \Sf{Sets}, \quad \Big( \coring{C} \to \{ \footnotesize{\text{funtorial right } \coring{C}\text{-coactions on } \omega }\}  \Big)
$$ form the category of $R$-corings to sets, see \cite[Proposition 4.2]{Bruguieres:1994} for more details.

To our need a concert description of $\lL_{\Bbbk}(\omega)$ is primordial.  As it is known by specialist  such a description  is not trivial and  rather quite technical. Next, we propose three descriptions of $\lL_{\Bbbk}(\omega)$ two of them use the so called reconstruction process which was  developed in  \cite{El Kaoutit/Gomez:2004}.

Let us first  fix some notations.  
For sake of simplicity, the vector spaces of morphisms in $\Tt$ will be denoted by 
$$ \thom{X}{Y}:= \hom{\Tt}{X}{Y},\quad 
\tend{X}:= \mathrm{End}_{\Tt} (\varrho^{X}).$$
In this way we can perform the direct sum of $R$-bimodules 
\begin{equation}\label{Eq:JG}
\bigoplus_{X \,\in\, \Tt} \omega(X)^*\tensor{\tend{X}}\omega(X), \text{ and  } \jJ_{\Tt}\,=\, \Big\langle \underset{}{}  \varphi \tensor{\tend{Y}}\alpha s - \varphi \alpha\tensor{\tend{X}}s \Big \rangle_{\varphi \in \omega(Y)^*,\, s \in \omega(X),\, \alpha \in \thom{X}{Y}},
\end{equation}
its $(R\tensor{\Bbbk}R)$-submodule generated by this set and  where a certain canonical pairings were used. 

On the other hand we set 
$$\Bb:=\bigoplus_{X,\, Y\, \in \,\Tt} \mathrm{T}_{X,\, Y}\,=\, \bigoplus_{X,\, Y\, \in \,\Tt} \hom{\Tt}{X}{Y}
$$ 
the Gabriel's ring\footnote{This is a ring with local units, namely, with  enough orthogonal idempotents the identities arrows of $\Tt$.}associated to the essentially small $\Bbbk$-linear category $\Tt$, together with the following unital bimodules:
\begin{equation}\label{Eq:S}
\B{\Sigma}:= \bigoplus_{X \,\in\, \Tt} \omega(X),\text{ and } 
\B{\Sigma}^ {\dag}:= \bigoplus_{X\,\in\, \Tt}\omega(X)^*,
\end{equation}
where $\B{\Sigma}$ is an unital $(\Bb,R)$-bimodule while $\B{\Sigma}^{\dag}$ is an unital $(R,\Bb)$-bimodule. 

In summary, the universal $R$-bimodule $\lL_{\Bbbk}(\omega)$ which soloves the above problems, is given by 
\begin{equation}\label{Eq:Lomega}
\lL_{\Bbbk}(\omega) \,\,\cong\,\, \int^{X\,\in\, \Tt} \omega(X)^*\tensor{\Bbbk}\omega(X)\footnote{This is the coend object of the functor $\omega(-)^*\tensor{\Bbbk}\omega(-)$ in $R$-bimodules, see \cite{MacLane-Categories}.} \, \cong\, \frac{\bigoplus_{X\, \in \,\Tt} \omega(X^*)\tensor{\tend{X}}\omega(X)}{\jJ_{\Tt}} \,\, \cong \,\, \B{\Sigma}^{\dag}\tensor{\Bb} \B{\Sigma}.
\end{equation}
In this way we arrive to the following lemma which is extremely useful in this context. 
\begin{lemma}\label{lema:LK}
The universal object of equation \eqref{Eq:Lomega}  establishes a well defined covariant functor $$\lL_{\Bbbk}: \Tanna \longrightarrow \Sf{GTCHAlgd}_{\Bbbk}$$ to the category of geometrically transitive and commutative  Hopf  algebroids. 
\end{lemma}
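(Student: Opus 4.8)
The plan is to verify in turn the three assertions packaged into the statement: that for each object $(\Tt,\omega)_R$ of $\Tanna$ the universal bimodule $\lL_{\Bbbk}(\omega)$ of \eqref{Eq:Lomega} carries a commutative Hopf $R$-algebroid structure, that this algebroid is geometrically transitive, and that a morphism of $\Tanna$ induces a morphism of Hopf algebroids compatibly with identities and composition. I would work throughout with the presentation $\lL_{\Bbbk}(\omega)\cong\B{\Sigma}^{\dag}\tensor{\Bb}\B{\Sigma}$, writing a generic element as a class $\bara{\varphi\tensor{\tend{X}}s}$ with $\varphi\in\omega(X)^*=\hom{R}{\omega(X)}{R}$ and $s\in\omega(X)$, taken modulo the submodule $\jJ_{\Tt}$ of \eqref{Eq:JG}.

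For the algebroid structure, I would read the source and target $R\to\lL_{\Bbbk}(\omega)$ off the identity object through the coherence isomorphism $\omega(\I)\cong R$, define the counit by evaluation $\bara{\varphi\tensor{\tend{X}}s}\mapsto\varphi(s)\in R$, and define the comultiplication by inserting a dual basis $\{s_i,s_i^*\}$ of the finitely generated projective $R$-module $\omega(X)$, via $\bara{\varphi\tensor{\tend{X}}s}\mapsto\sum_i\bara{\varphi\tensor{\tend{X}}s_i}\tensor{R}\bara{s_i^*\tensor{\tend{X}}s}$. The multiplication is transported from the tensor product of $\Tt$ through the monoidal isomorphism $\omega(-\tensor{}-)\cong\omega(-)\tensor{R}\omega(-)$, with unit the class of $\I$, and the antipode is supplied by the rigidity of $\Tt$ using the canonical identification $X\cong(X^*)^*$, exactly as in Corollary and Definition \ref{coro:comatrix}. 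That each of these descends modulo $\jJ_{\Tt}$ (independently of the chosen basis and of representatives of the classes) and that they jointly satisfy the Hopf-algebroid axioms is the content of the reconstruction computation carried out in \cite{El Kaoutit/Gomez:2004}; I would identify $\lL_{\Bbbk}(\omega)$ with the coring reconstructed there and import the structure rather than recheck every identity.

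Geometric transitivity I would delegate to Brugui\`eres. Since $(\Tt,\tensor{},\I)$ is $\Bbbk$-tensorial we have $\End{\Tt}{\I}\cong\Bbbk$, and since $\Alg{R}{\Bbbk}\neq\emptyset$ the fiber functor $\omega$ is nontrivial, so $\Tt$ is locally of finite type over $\Bbbk$ by \cite[Proposition 2.5]{Bruguieres:1994}. Thus $(\Tt,\omega)_R$ meets the hypotheses of \cite[Th\'eor\`eme 7.1]{Bruguieres:1994}, which identifies $\lL_{\Bbbk}(\omega)$ with the reconstructed coring and shows the resulting Hopf algebroid $(R,\lL_{\Bbbk}(\omega))$ to be transitive. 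Combined with $\End{\Tt}{\I}\cong\Bbbk$, transitivity yields geometric transitivity by \cite[Th\'eor\`eme 8.2]{Bruguieres:1994}; equivalently, by \cite[Proposition 7.3]{Bruguieres:1994}, $\lL_{\Bbbk}(\omega)$ is projective and faithfully flat over $(R\tensor{\Bbbk}R)$. Hence $(R,\lL_{\Bbbk}(\omega))\in\Sf{GTCHAlgd}_{\Bbbk}$, as required.

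For functoriality, given a morphism $(\theta,\Sf{f})\colon(\Tt,\omega)_R\to(\Pp,\gamma)_S$ the defining square $\gamma\circ\Sf{f}=\theta^*\circ\omega$ yields the identifications $\gamma(\Sf{f}(X))\cong\omega(X)\tensor{R}S$ and $\gamma(\Sf{f}(X))^*\cong\omega(X)^*\tensor{R}S$, whence, for each $X$, an $R$-bilinear map $\omega(X)^*\tensor{\Bbbk}\omega(X)\to\gamma(\Sf{f}(X))^*\tensor{\Bbbk}\gamma(\Sf{f}(X))\to\lL_{\Bbbk}(\gamma)$ composing the two structure maps with the canonical coend insertion for $\lL_{\Bbbk}(\gamma)$. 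These maps are dinatural in $X$, so the universal (coend) property of $\lL_{\Bbbk}(\omega)$ in \eqref{Eq:Lomega} produces a unique $R$-bilinear map $\lL_{\Bbbk}(\theta,\Sf{f})\colon\lL_{\Bbbk}(\omega)\to\lL_{\Bbbk}(\gamma)$ lying over $\theta$; this is precisely the dinaturality-plus-universality argument already used in the proof of Proposition \ref{prop:R}. Compatibility with multiplication, unit, counit, comultiplication and antipode is checked on the generating classes using the monoidality of $\Sf{f}$, $\omega$ and $\gamma$, and preservation of identities and composition follows from the uniqueness clause of the same universal property. The step I expect to be most delicate is not geometric transitivity (handled by Brugui\`eres) but the first stage: confirming that the antipode coming from rigidity and the comultiplication coming from a dual basis are well defined modulo $\jJ_{\Tt}$ and satisfy the full list of compatibilities simultaneously with the monoidal multiplication; this is exactly where the reconstruction formalism of \cite{El Kaoutit/Gomez:2004} does the heavy lifting, and the cleanest route is to invoke it wholesale.
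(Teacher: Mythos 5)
Your proposal is correct and follows essentially the same route as the paper, whose proof is simply the citation of Brugui\`eres's Th\'eor\`emes 5.2, 7.1 and 8.2: you delegate the Hopf-algebroid structure to the reconstruction theorem, geometric transitivity to Th\'eor\`emes 7.1 and 8.2 (via local finiteness from Proposition 2.5), and functoriality to the coend universal property exactly as the paper does in Proposition \ref{prop:R}. The only difference is that you spell out the structure maps and the dinaturality argument explicitly, which the paper leaves implicit.
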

\begin{proof}
This is a consequence of \cite[Théorèmes 5.2,  7.1 et 8.2]{Bruguieres:1994}.
\end{proof}
As a matter of indication we recall that the functor $\lL_{\Bbbk}$ is elementwise defined as follows. We use the second description of equation \eqref{Eq:Lomega}, that is, an element in $\lL_{\Bbbk}(\omega)$ is a sum of generic elements of the form $\bara{\varphi\tensor{{\rm T}_X}\Sf{x}}$  which is the  equivalence class of the element $\varphi\tensor{{\rm T}_X}\Sf{x} \in \omega(X)^*\tensor{{\rm T}_X}\omega(X)$. Consider a morphism  $(\theta, \Sf{f}): (\Tt,\omega)_R \to (\Pp, \gamma)_{S}$ in $\Tanna$, then the corresponding morphism of Hopf algebroids is given by 
$$
(\theta, \lL_{\Bbbk}(\Sf{f})): (R, \lL_{\Bbbk}(\omega)) \longrightarrow (S, \lL_{\Bbbk}(\gamma)), \quad \lr{\Big(r, \bara{\varphi\tensor{{\rm T}_X}\Sf{x}}\Big) \longmapsto \Big(\theta(r), \bara{(\varphi\tensor{R}1)\tensor{{\rm T}_{\Sf{f}(X)}}(\Sf{x}\tensor{R}1)} \,\Big) }$$
where 
$\varphi\tensor{R}1 \, \in \gamma(\Sf{f}(X))=\omega(X)^*\tensor{R}S$ is defined by sending $u\tensor{R}s \mapsto \varphi(u)s$.

\begin{remark}\label{rem:GT}
As one can realize the category $\Tanna$ is in fact a $2$-category with $0$-cells  are segments, $1$-cells are squares and $2$-cells are cubs. It is also  possible to endow $\Sf{GTCHAlgd}_{\Bbbk}$ within a structure of bicategory in such a way that $\lL_{\Bbbk}$ becomes a (strict) homomorphism of bicategories. At this level, the search of a $2$-adjoint  of $\lL_{\Bbbk}$ could be of extremely interest in this framework.

On the other hand, the construction of the functor $\lL_{\Bbbk}$ can be performed in  the more general  case of non necessarily abelian $\Bbbk$-linear categories. Precisely, we can consider the category whose objects are three-tuples of the form $(\Aa, \omega)_A$ where  $\Aa$ is a $\Bbbk$-linear symmetric and rigid monoidal (essentially small) category, $\omega: \Aa \to \Sf{proj}(A)$ is a monoidal faithful functor, and  $A$ is a commutative $\Bbbk$-algebra. The morphisms in this category are as above. 

In this way,  we can construct exactly as in equation \eqref{Eq:Lomega} a functor $\bara{\lL}_{\Bbbk}$ form this category to the category of commutative Hopf algebroids. For example,  the algebra of continuous representative functions  $\tRepf{\gG}$ of a topological groupoid $\gG$ with compact Hausdorff base space, described in  Remark \ref{rem:RFTops}, is  the image of the object $(\tRep{\gG}, \omega^{\scriptscriptstyle{top}})_{\calgb{\gG}}$ by this functor  $\bara{\lL}_{\Bbbk}$.
In general, some of the properties of $\bara{\lL}_{\Bbbk}(\omega)$, the image of an object  $(\Aa, \omega)_A$, can be deduced from that of the   unital bimodule ${}_{\Bb}\B{\Sigma}_A$ described in equation \eqref{Eq:S}.
For instance, one can show by using the third description  of equation \eqref{Eq:RG}, that   $\bara{\lL}_{\Bbbk}(\omega)$ is projective as an $(A\tensor{\Bbbk}A)$-module,  if the  unital right $(\Bb\tensor{\Bbbk}A)$-module\footnote{Here $\Bb\tensor{\Bbbk}A$ is considered as a ring with enough orthogonal idempotents, namely, the set $\{id_{X}\tensor{\Bbbk}1_A\}_{X \,\in \Aa}$, where $\Bb$ is the Gabriel ring of $\Aa$.} $\B{\Sigma}$ is projective. 
\end{remark}

\smallskip

\textbf{Acknowledgements.} I  would like to thank Alain Bruguières for carefully reading the first version of the draft.   I would like to thank also  Fabio Gavarini and Niels Kowalzig for helpful discussions about  this subject, for encourage me to write  this note and  for inviting me to visit the  Dipartimento di Matematica, Universit\`a di Roma "Tor Vergata".

\end{document}